\documentclass[12pt]{amsart}

\usepackage[T1]{fontenc}
\usepackage[utf8]{inputenc}
\usepackage{amsmath,amsthm,amssymb,amsfonts}
\usepackage{enumerate}
\usepackage{mathrsfs}
\usepackage{hyperref}

\allowdisplaybreaks

\hypersetup{
  colorlinks=true,
  linkcolor=blue,
  citecolor=blue,
  urlcolor=blue
}

\newcommand{\Z}{\mathbb Z}
\newcommand{\N}{\mathbb N}
\newcommand{\Zp}{\mathbb Z_p}
\newcommand{\Cp}{\mathbb C_p}
\newcommand{\Fp}{\mathbb F_p}
\newcommand{\Fq}{\mathbb F_q}
\newcommand{\Cinf}{\mathbb C_\infty}
\newcommand{\Kinf}{K_\infty}
\newcommand{\cO}{\mathcal O}
\newcommand{\cA}{\mathcal A}
\newcommand{\cL}{\mathbf L}
\newcommand{\cH}{\mathbf H}
\newcommand{\cB}{\mathcal B}
\newcommand{\cM}{\mathcal M}
\newcommand{\fm}{\mathfrak m}
\newcommand{\CD}{\mathfrak D}
\newcommand{\dd}{\,\mathrm d}
\newcommand{\norm}[1]{\lVert#1\rVert}
\newcommand{\rbinom}[2]{\overline{\binom{#1}{#2}}}
\newcommand{\dirlim}{\varinjlim}
\newcommand{\invlim}{\varprojlim}

\DeclareMathOperator{\Tr}{Tr}
\DeclareMathOperator{\Spec}{Spec}
\DeclareMathOperator{\Def}{Def_q}

\newtheorem{theorem}{Theorem}[section]
\newtheorem{lemma}[theorem]{Lemma}
\newtheorem{proposition}[theorem]{Proposition}
\newtheorem{corollary}[theorem]{Corollary}

\theoremstyle{definition}
\newtheorem{definition}[theorem]{Definition}
\newtheorem{remark}[theorem]{Remark}
\newtheorem{example}[theorem]{Example}

\numberwithin{equation}{section}
\begin{document}

\title[Spectral zeta functions in positive characteristic]
{Spectral zeta functions in positive characteristic}

\author{Su Hu}
\address{Department of Mathematics, South China University of Technology,
Guangzhou, Guangdong 510640, China}
\email{mahusu@scut.edu.cn}

\author{Min-Soo Kim}
\address{Department of Mathematics Education, Kyungnam University,
Changwon, Gyeongnam 51767, Republic of Korea}
\email{mskim@kyungnam.ac.kr}

\subjclass[2020]{Primary 46S10; Secondary 11S80, 47S10, 47A10}
\keywords{positive characteristic, spectral zeta transform,
inverse Stieltjes transform, Carlitz moments,
digit measure, Frobenius difference equation,
Fredholm determinant, Anderson--Thakur function}

\begin{abstract}
Spectral zeta functions and zeta-regularized determinants encode
the spectrum of an operator in analytic functions.
Over the completion $\mathbb C_\infty$ of an algebraic closure of
$\mathbb F_q((\pi))$, two features of characteristic~$p$ break
this classical picture. Complex powers have exponents that vary
only continuously in $\mathbb Z_p$, not analytically. Scalar
traces record eigenvalue multiplicities only modulo~$p$.
We develop a resolvent-based theory that treats these two
obstructions separately.

For the first, inverse Stieltjes duality turns suitable scalar
resolvents into locally analytic spectral distributions on
$\mathbb F_q[[\pi]]$. A single exponent of the Mellin transform
already determines the distribution and the entire Mellin family.
We characterize exactly which power series arise in this way, by
a growth condition on Carlitz moments. The criterion depends on
the support: bounded Taylor coefficients are not sufficient.
For stationary digit measures, the transform is the unique fixed
point of an explicit contraction. The first $q-1$ nonconstant
moments recover the measure.

For the second, we replace traces by the Frobenius defect
$F/\sigma(F)$, where $\sigma$ raises coefficients to their
$q$th powers. On power series with small coefficients, this map
is a group automorphism that preserves Gauss distances, and a
convergent infinite product inverts it. Polynomial defects
produce entire functions. They characterize exactly the diagonal
spectra that are finite unions of forward Frobenius orbits,
counted with multiplicity. For the shifted Carlitz operator,
the resulting Fredholm determinant is a normalized reciprocal
of the Anderson--Thakur function, and its Newton polygon is
determined exactly.
\end{abstract}
\maketitle

%======================================================================
\section{Introduction}
\label{sec:intro}
%======================================================================

Spectral zeta functions provide a way to study an operator through
analytic functions associated with its spectrum.
For a positive discrete spectrum
$\{\lambda_n\}_{n\ge0}$, with eigenvalues repeated according to
multiplicity, the classical spectral zeta function is initially
defined by
\[
Z(s,\lambda)=\sum_{n=0}^{\infty}\frac{1}{(\lambda_n+\lambda)^{s}}
\]
in a region of convergence.
If it admits a meromorphic continuation that is regular at $s=0$,
one defines the associated zeta-regularized determinant by
\[
\log D(\lambda)
=
-\left.\frac{\partial}{\partial s}Z(s,\lambda)\right|_{s=0}.
\]
The work of Hawking and Voros established the importance of this
construction in mathematical physics and spectral analysis
\cite{Hawking,Voros}.
Among its applications are the evaluation of functional determinants
and the study of quantum partition functions; see \cite{Elizalde}.

The discrete sum is only one representation of spectral information.
When an appropriate scalar spectral measure is available, a zeta
function can instead be expressed as the integral of a power kernel
against that measure.
The resolvent provides another route to the same information:
in familiar spectral settings, suitable scalarizations of
resolvents are Stieltjes transforms of spectral measures.
This point of view is particularly useful when the spectrum is not
discrete or when the ordinary operator trace is unavailable.
It also suggests a non-archimedean construction, since the resolvent
and the Stieltjes kernel make sense without a real heat-time
parameter.

In \cite{HuKim}, we introduced $p$-adic spectral zeta functions
by applying $p$-adic Mellin transforms to locally analytic
interpolations of discrete spectral data.
That construction is modeled on the $p$-adic Hurwitz and
Hurwitz-type Euler zeta functions.
It uses prescribed distributions on the interpolation parameter
space $\Zp$, and therefore requires the spectral data to fit a
suitable interpolation scheme.
In \cite{HuKimInverse}, we developed a resolvent-based approach:
a chosen scalarization of a bounded operator's resolvent is
converted into a generalized spectral distribution by the
inverse Stieltjes transform.
The distribution is then evaluated on a zeta kernel.
This replaces the interpolation parameter by the spectral
distribution itself and accommodates suitable non-discrete
compact spectra.

The present paper asks how this approach changes over a local
field of positive characteristic.
Throughout, we work over
\[
\Kinf=\Fq((\pi)),\qquad
\cO=\Fq[[\pi]],\qquad
\Cinf=\widehat{\overline{\Kinf}},
\qquad q=p^e,
\]
where $p$ is prime and $e\ge1$, and we normalize the absolute value
by $|\pi|=q^{-1}$.
Our functions, distributions, and scalarized operator expressions
take values in $\Cinf$.
Thus we study positive-characteristic-valued analysis, rather than
complex-valued harmonic analysis on a non-archimedean space.

Two differences from the characteristic-zero theory are decisive.
First, a principal unit in $\Cinf$ admits continuous $\Zp$-powers,
defined by reducing binomial coefficients modulo $p$.
This does not make the exponent a $\Cinf$-analytic variable.
Consequently, the usual prescription involving differentiation
with respect to the exponent at zero does not transfer directly.
Second, a scalar trace records an eigenvalue multiplicity only
modulo $p$.
Even complete knowledge of all scalar power traces therefore
need not determine the characteristic polynomial or a Fredholm
determinant.
The analytic reconstruction of a scalar distribution and the
reconstruction of a determinant must be treated as distinct
problems.

Our first problem is to determine which analytic functions arise
from spectral distributions.
Let $\cL^*(\cO)$ denote the continuous $\Cinf$-linear dual of the
space of locally analytic functions on $\cO$.
For $\mu\in\cL^*(\cO)$, we consider
\begin{equation}\label{eq:intro-mellin}
\cM_\mu(y,t)=\mu_x\bigl((1+tx)^y\bigr),
\qquad y\in\Zp,\quad |t|<1.
\end{equation}
The parameter $t$ is analytic, while $y$ is a continuous exponent
parameter.
The associated relative spectral zeta transform is
\begin{equation}\label{eq:intro-zeta}
\zeta_\infty^\mu(y,\lambda)
=
\cM_\mu(-y,\lambda^{-1}),
\qquad |\lambda|>1.
\end{equation}
The word ``relative'' refers to the use of the normalized kernel
$(1+x/\lambda)^{-y}$, without a leading factor depending only
on $\lambda$.
It does not refer to a subtraction of two operator spectra.
The relation with the exponent and normalization conventions
of \cite{HuKimInverse} is explained in
Remark~\ref{rem:padic-zeta-normalization}.

To give \eqref{eq:intro-mellin} a spectral interpretation,
let $T$ be a bounded operator on a $\Cinf$-Banach space and let
$\Lambda$ be a continuous linear functional on its bounded
operator algebra.
Under the admissibility conditions specified in
Section~\ref{sec:distributions}, the scalar resolvent
\[
F_{T,\Lambda}(z)=\Lambda((zI-T)^{-1})
\]
is the Stieltjes transform of a unique distribution
$\mu_{T,\Lambda}$ on $\cO$.
Its ordinary moments satisfy
\[
\mu_{T,\Lambda}(x^k)=\Lambda(T^k).
\]
No cyclicity is assumed for $\Lambda$.
For example, $\Lambda(B)=\ell(Bu)$ is a scalar matrix coefficient.
The distribution is therefore attached to the pair
$(T,\Lambda)$, not to the spectrum of $T$ alone.

The exponent $y=-1$ turns the Mellin kernel into a resolvent kernel.
Writing
\[
H_\mu(t)=\cM_\mu(-1,t),
\]
one has
\[
S_\mu(z)=z^{-1}H_\mu(-z^{-1}),\qquad |z|>1.
\]
Thus this single exponent contains all ordinary moments.
The substantive range question is which Taylor coefficient
sequences come from a locally analytic distribution on the
specified compact set $\cO$.

We answer this question in Carlitz coordinates.
Let
\[
G_n(X)=\sum_{k=0}^n g_{n,k}X^k
\]
be the general Carlitz polynomials associated with $\pi$.
Theorem~\ref{thm:mellin-reconstruction} shows that a formal series
$H(t)=\sum_{k\ge0}a_kt^k$ is the $y=-1$ Mellin slice of a unique
distribution on $\cO$ if and only if
\begin{equation}\label{eq:intro-range}
\limsup_{n\to\infty}
\left|
\sum_{k=0}^n g_{n,k}(-1)^ka_k
\right|^{1/n}
\le1.
\end{equation}
This condition automatically implies boundedness of the
coefficients $a_k$ and convergence on $|t|<1$.
It also determines the full Mellin family by its binomial expansion.
The criterion is sensitive to the support set:
bounded Taylor coefficients alone are not sufficient.
Its proof is a dual application of weighted Carlitz interpolation.
For the precise Banach norms used here, we include a direct proof
of the required orthonormal basis statement.

We next consider stationary digit measures.
Their self-similarity gives a weighted composition equation for
\eqref{eq:intro-mellin}.
On every closed disc of radius less than one, the corresponding
operator is a strict contraction on functions with a prescribed
constant term.
The transform is therefore the unique normalized analytic fixed
point.
The iterates give finite digit sums with a uniform error bound.
Within this class of measures, the moments
$m_1,\ldots,m_{q-1}$ determine all digit weights; the zeroth
moment is already fixed to be one.
The position operator on $C(\cO,\Cinf)$ realizes these measures
as scalar spectral distributions and provides a non-discrete
spectral model.

Our second problem concerns determinants.
Since scalar traces are insufficient in general, we prescribe
additional data through an equation involving coefficient Frobenius.
Let
\[
\mathcal U=
\left\{
1+\sum_{r\ge1}a_rZ^r:\ |a_r|<1
\right\},
\]
and let $\sigma$ fix $Z$ and raise coefficients to their $q$th powers.
We call
\[
\Def(F)=\frac{F}{\sigma(F)}
\]
the Frobenius defect of $F$.
For independently prescribed $\Phi\in\mathcal U$, we study
\[
F=\Phi\,\sigma(F),\qquad F\in\mathcal U.
\]
The small-coefficient condition removes the ambiguity caused
by nontrivial series fixed by $\sigma$.

Theorem~\ref{thm:frobenius-resolution} proves that $\Def$ is a
group automorphism of $\mathcal U$.
Its inverse is constructed both by an Artin--Schreier recursion
and by the product
\[
\Def^{-1}(\Phi)=\prod_{j\ge0}\sigma^j(\Phi).
\]
The product converges coefficientwise and in the Gauss norm on
every closed disc of radius less than one.
Moreover, the defect map preserves the $Z$-adic filtration and
the Gauss distances on these discs.

A polynomial defect imposes a more restrictive spectral structure.
If
\[
\Phi(Z)=\prod_{i=1}^s(1-c_iZ),
\qquad 0<|c_i|<1,
\]
then its normalized solution is the entire function
\[
F(Z)=\prod_{i=1}^s\prod_{n\ge0}(1-c_i^{q^n}Z).
\]
Consequently, a diagonal determinant with small eigenvalues
tending to zero has polynomial Frobenius defect if and only if
its nonzero eigenvalue multiset is a finite multiset union of
complete forward Frobenius orbits.
Multiplicities are part of the statement.
Zero eigenvalues are excluded because they do not affect
$\det(I-ZA)$.
The polynomial condition is essential:
every $F\in\mathcal U$ has its own, possibly nonpolynomial, defect.

The shifted Carlitz--Bargmann--Fock operator
\[
A=\tau d+\pi I
\]
provides the single-orbit example.
Its eigenvalues are $\pi^{q^n}$, and its Fredholm determinant is
\[
\Delta_A(Z)=\prod_{n\ge0}(1-\pi^{q^n}Z).
\]
The diagonal relation supplies the defect $1-\pi Z$
independently of scalar traces.
We determine all coefficient valuations and successive Newton slopes.
If $\theta=\pi^{-1}$ and $\omega_\theta$ denotes the
Anderson--Thakur function, its standard product formula gives
\[
\Delta_A(Z)=\frac{\omega_\theta(0)}{\omega_\theta(Z)}.
\]
Thus this operator realizes a normalized reciprocal of an
established special function as a Fredholm determinant
\cite{AndersonThakur,AnglesPellarin}.
The identity is a comparison of the two product formulas,
not a new definition of the Anderson--Thakur function.

The two reconstruction problems meet at the logarithmic
derivative of a diagonal determinant.
When its eigenvalues $\lambda_n$ belong to $\cO$, satisfy
$|\lambda_n|<1$, and tend to zero, the bounded measure
\[
\nu_\lambda=\sum_{n\ge0}\lambda_n\delta_{\lambda_n}
\]
satisfies
\[
\frac{\Delta_\lambda'(Z)}{\Delta_\lambda(Z)}
=
-\cM_{\nu_\lambda}(-1,-Z),\qquad |Z|<1.
\]
The Mellin transform describes the logarithmic derivative,
whereas prescribed Frobenius data determine the normalized
determinant.
This also explains why the determinant terminology differs
from the normalized $p$-adic log-gamma functionals of
\cite{HuKimInverse}; see
Remark~\ref{rem:padic-logarithmic-comparison}.

The analytic tools used here are established ones:
the Shnirelman--Vishik contour method
\cite{Koblitz,Vishik}, Carlitz polynomial systems and the digit
principle \cite{Carlitz,Wagner,Conrad}, and weighted interpolation
theory \cite{Amice,Kochubei}.
Goss's formal Mellin transform gives a related description
of bounded measures in divided-power coordinates
\cite{GossFormal}.
Our emphasis is on the explicit support-sensitive range criterion,
the analytic fixed-point description of digit transforms,
and normalized Frobenius reconstruction with its
polynomial-defect classification.

The local Carlitz interpolation system uses $\pi$ and is not
identified with the global Carlitz module attached to
$\theta=\pi^{-1}$.
No identification of our spectral zeta transforms with
Goss or Taelman special values is asserted.

Section~\ref{sec:distributions} develops inverse Stieltjes
duality, Carlitz coordinates, and scalar spectral distributions.
Section~\ref{sec:mellin} establishes the Mellin calculus
and reconstruction theorem.
Section~\ref{sec:digits} treats digit measures.
Section~\ref{sec:fredholm} develops Frobenius-normalized
determinants and applies the theory to the shifted Carlitz operator.

%======================================================================
\section{Spectral distributions and Carlitz coordinates}
\label{sec:distributions}
%======================================================================

There are two complementary descriptions of a locally analytic
distribution: an exterior Cauchy transform and a sequence of
Carlitz moments.
We develop both descriptions with explicit Banach norms.

Write $\N_0=\{0,1,2,\ldots\}$ and
\[
\fm=\{z\in\Cinf:|z|<1\}.
\]
For $a\ne0$, put
\[
v_\pi(a)=-\log_q|a|,
\qquad v_\pi(0)=+\infty.
\]
We use
\[
D(a,r)=\{z\in\Cinf:|z-a|\le r\},
\qquad
D(a,r^-)=\{z\in\Cinf:|z-a|<r\}.
\]
An analytic function on an open disc means a power series
converging on every strictly smaller closed disc.
This is stronger than being locally analytic at every point.

\subsection{Analytic neighborhoods and the Cauchy transform}

The compact set $\cO$ is not the closed unit disc of $\Cinf$.
For $n\ge0$, let
\[
\cA_n=
\left\{
\sum_{j=0}^{n-1}a_j\pi^j:\ a_j\in\Fq
\right\},
\qquad \cA_0=\{0\}.
\]
Choose $\gamma_n\in\Cinf$ with
\[
|\gamma_n|=r_n=q^{-n+1/2},
\]
and set
\[
U_n=\bigcup_{a\in\cA_n}D(a,r_n),
\qquad
\Omega_n=
\mathbb P^1(\Cinf)\setminus
\bigcup_{a\in\cA_n}D(a,r_n^-).
\]
The discs at each level are disjoint, since distinct
$a,b\in\cA_n$ satisfy
\[
|a-b|\ge q^{-(n-1)}>r_n.
\]
Every level-$(n+1)$ disc lies strictly inside its parent:
the centers differ by at most $q^{-n}$, while
\[
r_{n+1}<q^{-n}<r_n.
\]
Furthermore,
\[
\bigcap_nU_n=\cO,
\qquad
\bigcup_n\Omega_n=\mathbb P^1(\Cinf)\setminus\cO.
\]
Indeed, a point in every $U_n$ is a limit of representatives
in $\cA_n$ and hence belongs to $\cO$.
Conversely, every point outside the compact set $\cO$ has
positive distance from it.

Define
\[
B_n=
\prod_{a\in\cA_n}
\Cinf\left\langle\frac{X-a}{\gamma_n}\right\rangle.
\]
For
\[
f|_{D(a,r_n)}
=
\sum_{k\ge0}f_{a,k}(X-a)^k,
\]
we have
\[
|f_{a,k}|r_n^k\longrightarrow0,
\qquad
\norm{f}_n=\max_{a,k}|f_{a,k}|r_n^k.
\]
Restriction gives continuous injections $B_n\to B_{n+1}$.
Injectivity follows from the identity principle on each
parent disc.

\begin{definition}\label{def:distribution}
The space of locally analytic functions on $\cO$ is
\[
\cL(\cO)=\dirlim_n B_n,
\]
equipped with its locally convex inductive-limit topology.
Its continuous $\Cinf$-linear dual is denoted by
$\cL^*(\cO)$ and is equipped with the seminorms
\[
\norm{\mu}_n
=
\sup_{\norm{f}_n\le1}|\mu(f)|.
\]
We write
\[
\mu(f)=\int_{\cO}f(x)\dd\mu(x).
\]
\end{definition}

Compactness identifies $\cL(\cO)$ with analytic germs on
neighborhoods of $\cO$.
The identity principle identifies these germs with their
restrictions to $\cO$.
A bounded measure is a continuous linear functional on
$C(\cO,\Cinf)$.
Every bounded measure restricts to an element of $\cL^*(\cO)$.
All dual-topology statements refer to the displayed seminorms.

\begin{definition}[Characteristic $p$ Shnirelman integral]
 For $|\gamma|=r>0$, define
\[
\mathscr A_{a,\gamma}(h)
=
\lim_{\substack{N\to\infty\\p\nmid N}}
\frac1N\sum_{\eta^N=1}h(a+\gamma\eta),
\]
whenever the limit exists, and put
\[
\mathscr C_{a,\gamma}(h)
=
\mathscr A_{a,\gamma}((z-a)h(z)).
\] \end{definition}

\begin{lemma}[Characteristic $p$ Cauchy integral formula]
\label{lem:contours}
Let $r\in|\Cinf^\times|$ and choose $|\gamma|=r$.
Suppose
\[
h(z)=\sum_{k\in\Z}c_k(z-a)^k,
\qquad
|c_k|r^k\longrightarrow0
\quad\text{as }|k|\to\infty.
\]
Then the Laurent series converges uniformly on $|z-a|=r$,
and
\[
\mathscr A_{a,\gamma}(h)=c_0,
\qquad
\mathscr C_{a,\gamma}(h)=c_{-1}.
\]
Moreover,
\[
|\mathscr C_{a,\gamma}(h)|
\le r\sup_{|z-a|=r}|h(z)|.
\]
If $f$ is Tate analytic on $D(a,r)$ and $|b-a|<r$, then
\[
\mathscr C_{a,\gamma}
\left(\frac{f(z)}{(z-b)^{m+1}}\right)
=
D^{(m)}f(b),
\qquad m\ge0,
\]
where $D^{(m)}f(b)$ is the coefficient of $U^m$ in $f(b+U)$.
\end{lemma}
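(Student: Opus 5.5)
The plan is to reduce everything to the averaging of monomials over roots of unity, and then pass to the limit using uniform convergence. First I would show that the Laurent series converges uniformly on the sphere $|z-a|=r$. For $|z-a|=r$ the term $c_k(z-a)^k$ has absolute value $|c_k|r^k$, which tends to $0$ as $|k|\to\infty$. In the ultrametric setting this already gives uniform convergence, with tail bound $\sup_{|k|>K}|c_k|r^k$, and it also gives $\sup_{|z-a|=r}|h(z)|\le\max_k|c_k|r^k$.

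The basic computation is the root-of-unity average. Fix $N$ with $p\nmid N$; then the $N$th roots of unity in $\Cinf$ are $N$ distinct elements and $N$ is invertible. One has
\[
\frac1N\sum_{\eta^N=1}\eta^k=
\begin{cases}1,& N\mid k,\\ 0,& N\nmid k.\end{cases}
\]
Hence $\frac1N\sum_\eta h(a+\gamma\eta)=\sum_{N\mid k}c_k\gamma^k$, where the interchange of sum and finite average is justified by uniform convergence. Note that $|1/N|=1$, so averaging does not enlarge sizes; this is exactly where the condition $p\nmid N$ matters. The difference from $c_0$ is then bounded by $\sup_{|k|\ge N}|c_k|r^k$, which tends to $0$ as $N\to\infty$. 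This gives $\mathscr A_{a,\gamma}(h)=c_0$.

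Applying the same result to $(z-a)h(z)$, whose coefficients $c_{k-1}$ satisfy the same decay hypothesis, gives $\mathscr C_{a,\gamma}(h)=c_{-1}$. For the norm estimate, each finite average of $(z-a)h(z)$ has size at most $r\sup_{|z-a|=r}|h|$, because $|1/N|=1$, and this bound survives the limit.

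For the derivative formula, expand $f(z)=\sum_j D^{(j)}f(b)(z-b)^j$. I would then re-expand $(z-b)^{-m-1}=(z-a)^{-m-1}\bigl(1-\tfrac{b-a}{z-a}\bigr)^{-m-1}$, which converges on $|z-a|=r$ since $|b-a|<r$. This produces a Laurent series in $z-a$ with the required decay; the decay holds because $f$ is Tate analytic on $D(a,r)$ and the geometric factor decays in negative powers. The coefficient of $(z-a)^{-1}$ is then the residue.

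The main obstacle is the identification of this residue with $D^{(m)}f(b)$ without invoking any complex-analytic residue theorem. To handle it, I would write $f(z)/(z-b)^{m+1}=\sum_j D^{(j)}f(b)(z-b)^{j-m-1}$ and treat the terms separately. For $j\ge m+1$ the terms are polynomials in $z-a$ and contribute no $(z-a)^{-1}$ term. For $j\le m$, the term $(z-b)^{-n}$ with $n\ge1$ has $(z-a)^{-1}$-coefficient equal to $\binom{-n}{0}=1$ if $n=1$ and $0$ if $n\ge2$, read off directly from the binomial expansion above. So only $j=m$ survives, giving $D^{(m)}f(b)$.

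Interchanging the $j$-sum with coefficient extraction needs uniform convergence on $|z-a|=r$. Here $|z-b|=r$ and the Taylor expansion of $f$ about $b$ converges on $D(a,r)=D(b,r)$ with $|D^{(j)}f(b)|r^j\to0$, which supplies exactly that.
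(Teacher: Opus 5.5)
Your proposal is correct and follows essentially the same route as the paper: root-of-unity orthogonality with $|N|=1$ for the averages, multiplication by $z-a$ for $\mathscr C$ and its bound, then recentering $f$ at $b$ and expanding $(z-b)^{-n}$ in powers of $(z-a)^{-1}$, so that only the $j=m$ term has a nonzero $(z-a)^{-1}$ coefficient. The differences are minor: you bound the averaging error directly by $\sup_{|k|\ge N}|c_k|r^k$ instead of first treating Laurent polynomials and then approximating uniformly, and you take the recentering estimate $|D^{(j)}f(b)|r^j\to0$ as known, whereas the paper checks it via $|c_k|r^k\le\sup_{n\ge k}|a_n|r^n$.
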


\begin{proof}
For $p\nmid N$, the $N$th roots of unity are distinct,
$|N|=1$, and
\[
\frac1N\sum_{\eta^N=1}\eta^k
=
\begin{cases}
1,&N\mid k,\\
0,&N\nmid k.
\end{cases}
\]
For fixed $k\ne0$, divisibility is impossible when $N>|k|$.
This proves coefficient extraction for Laurent polynomials.
Each finite averaging operator has norm at most one, so
uniform approximation proves the assertion for the stated
Laurent series.
Multiplication by $z-a$ gives the contour estimate.

Changing the center of a Tate series from $a$ to $b$
preserves convergence on the closed disc.
If $f(z)=\sum_{n\ge0}a_n(z-a)^n$, its coefficient $c_k$
at the new center satisfies
\[
c_k=\sum_{n\ge k}\binom nk a_n(b-a)^{n-k},
\qquad
|c_k|r^k
\le
\sup_{n\ge k}|a_n|r^n\longrightarrow0.
\]
Thus
\[
f(z)=\sum_{k\ge0}D^{(k)}f(b)(z-b)^k.
\]
For $j\ge1$,
\[
(z-b)^{-j}
=
\sum_{\ell\ge0}
\binom{j+\ell-1}{\ell}
(b-a)^\ell(z-a)^{-j-\ell}
\]
converges uniformly on the circle.
Its coefficient of $(z-a)^{-1}$ is one for $j=1$
and zero otherwise.
After dividing the Taylor expansion of $f$ by
$(z-b)^{m+1}$, only the term $k=m$ contributes.
\end{proof}

Let $H_n$ be the supremum-norm completion on $\Omega_n$
of finite principal-part sums
\[
\sum_{a\in\cA_n}\sum_{k=0}^{N_a}
c_{a,k}(z-a)^{-k-1}.
\]
Then
\begin{equation}\label{eq:exterior-norm}
H_n=
\left\{
\sum_{a\in\cA_n}\sum_{k\ge0}c_{a,k}(z-a)^{-k-1}:
|c_{a,k}|r_n^{-k-1}\to0
\right\},
\end{equation}
and
\[
\norm{F}_{H_n}
=
\max_{a,k}|c_{a,k}|r_n^{-k-1}.
\]
The upper estimate is immediate.
For the reverse estimate, use
\[
c_{a,k}
=
\mathscr A_{a,\gamma_n}
\bigl((z-a)^{k+1}F(z)\bigr).
\]
Principal parts at other centers are analytic on
$D(a,r_n)$ and contribute zero.

Restriction gives continuous maps $H_{n+1}\to H_n$.
A child pole expands around its parent with ratio at most
$q^{-1/2}$.
Put
\[
\cH_0(\cO)=\invlim_nH_n.
\]
Its elements are compatible exterior analytic functions
vanishing at infinity.

\begin{proposition}[Characteristic  $p$ inverse Stieltjes duality]
\label{prop:duality}
The maps
\[
S_\mu(z)=\mu_x((z-x)^{-1})
\]
and
\[
(VF)(f)
=
\sum_{a\in\cA_n}
\mathscr C_{a,\gamma_n}(F_n(z)f(z)),
\qquad f\in B_n,
\]
are mutually inverse continuous linear maps
\[
\cL^*(\cO)
\ \underset{V}{\stackrel{S}{\rightleftarrows}}\
\cH_0(\cO).
\]
They satisfy
\[
\norm{S_\mu}_{H_n}\le r_n^{-1}\norm{\mu}_{n+1},
\qquad
\norm{VF}_n\le r_n\norm{F_n}_{H_n}.
\]
\end{proposition}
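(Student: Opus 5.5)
We first check that $S$ is well defined and obeys the stated bound, then that $V$ is well defined, independent of $n$, and obeys its bound, and finally that $VS=\mathrm{id}$ and $SV=\mathrm{id}$.

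\emph{The map $S$.} Fix $z\in\Omega_n$ and put $f_z(x)=(z-x)^{-1}$. We need $f_z\in B_{n+1}$ together with $\norm{f_z}_{n+1}$. Let $b\in\cA_{n+1}$ have parent $a\in\cA_n$. Then
\[
|z-b|=|z-a|\ge r_n>r_{n+1},
\]
so on $D(b,r_{n+1})$ we have
\[
f_z(x)=\sum_k (z-b)^{-k-1}(x-b)^k .
\]
Each term satisfies $|(z-b)^{-k-1}|r_{n+1}^k\le r_n^{-1}q^{-k/2}$, hence $\norm{f_z}_{n+1}\le r_n^{-1}$. This gives $|S_\mu(z)|\le r_n^{-1}\norm{\mu}_{n+1}$. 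To see that $S_\mu\in H_n$, expand $f_z$ in partial fractions. For $b$ a child of $a$,
\[
(z-b)^{-k-1}=\sum_{\ell\ge0}\binom{k+\ell}{\ell}(b-a)^\ell(z-a)^{-k-\ell-1}.
\]
Apply $\mu$ coefficientwise. Continuity of $\mu$ on $B_{n+1}$, the decay $|b-a|\le q^{-n}<r_n$, and the explicit norm \eqref{eq:exterior-norm} then bound the coefficients $c_{a,j}$ of $S_\mu$ by $r_n^{-1}\norm{\mu}_{n+1}r_n^{j+1}$. Compatibility across $n$ is automatic, since $S_\mu$ is a single function on $\bigcup\Omega_n$. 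So $S_\mu\in\cH_0(\cO)$.

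\emph{The map $V$.} Take $f\in B_n$ and $F_n\in H_n$. On each circle $|z-a|=r_n$, the product $F_nf$ is a Laurent series with the decay required by Lemma~\ref{lem:contours}. The principal parts of $F_n$ at other centers are analytic on $D(a,r_n)$, and so is $f$. Hence $\mathscr C_{a,\gamma_n}(F_nf)$ is the residue at $a$ and equals $\sum_k c_{a,k}f_{a,k}$. This gives
\[
|VF(f)|\le\max_{a,k}|c_{a,k}|r_n^{-k-1}\cdot|f_{a,k}|r_n^{k}\cdot r_n\le r_n\norm{F_n}_{H_n}\norm f_n .
\]
Next, $V$ does not depend on $n$. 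Regard $f$ as an element of $B_{n+1}$ and use $F_{n+1}$, which restricts to $F_n$. Both expressions are then sums of residues of the same function $F_{n+1}f$. Each level-$n$ disc contains its children's poles, and the residue of the single expansion around $a$ equals the sum of the children's residues. To prove this, expand each child principal part around $a$ as in the display above and compare the coefficients of $(z-a)^{-1}$. The difficulty is that $f$ need only be analytic on the smaller child discs, not on $D(a,r_n)$. We handle this by choosing $n$ large enough that $f\in B_n$. Compatibility with the inductive limit then follows from the coefficient formula $\sum_k c_{a,k}f_{a,k}$, which is linear and continuous.

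\emph{Inverse identities.} For $VS_\mu(f)$, write $S_\mu(z)f(z)=\mu_x\bigl(f(z)/(z-x)\bigr)$ and interchange $\mathscr C$ with $\mu$. This is justified because the Riemann-type averages converge in $B_{n+1}$-norm, uniformly, and $\mu$ is continuous. Then
\[
\sum_a\mathscr C_{a,\gamma_n}\bigl(f(z)/(z-x)\bigr)=f(x)
\]
by the $m=0$ case of Lemma~\ref{lem:contours} applied at $b=x$ in the disc containing $x$; the other discs contribute $0$. This identity must hold as functions of $x$ in $B_{n+1}$, and verifying it in that space, rather than pointwise, is the main technical obstacle. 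It is handled by the same coefficient expansion. For $SV=\mathrm{id}$, it suffices by density and continuity to test on $F=(z-a)^{-k-1}$. Here $VF(f)=D^{(k)}f(a)$, and applying this to $f_z$ gives $D^{(k)}_x(z-x)^{-1}|_{x=a}=(z-a)^{-k-1}$. Linearity and the two norm estimates extend both identities to all elements, which proves the proposition.
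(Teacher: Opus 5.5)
Your argument is correct in substance. The construction of $S$ and $V$ and their bounds follow the paper closely, but you reach the two inverse identities by a different route.

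The paper identifies the level-$n$ principal-part coefficients of $S_\mu$ as local moments,
$c^{(n)}_{a,k}=m^{(n)}_{a,k}=\mu\bigl(\mathbf1_{a,n}(x)(x-a)^k\bigr)$.
These are bounded by $\norm{\mu}_{n+1}q^{-nk}$, since $|x-a|\le q^{-n}$ on the child discs. This one identification gives membership in $H_n$, with weighted decay $q^{-k/2}$. It also gives both inverse identities by coefficient matching:
\begin{itemize}
\item $VS_\mu(f)=\sum_{a,k}f_{a,k}m^{(n)}_{a,k}=\mu(f)$, by continuity of $\mu$ on $B_n$;
\item $VF\bigl(\mathbf1_{a,n}(x)(x-a)^k\bigr)=c^{(n)}_{a,k}$, so $S_{VF}$ and $F_n$ have the same principal parts.
\end{itemize}

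You instead prove $VS=\mathrm{id}$ by passing the Shnirelman limit through $\mu$ and applying the Cauchy formula pointwise in $x$. This needs the averaged kernels to converge in $B_{n+1}$. That convergence does hold: expanding $1/(z-x)$ about $a$, every error term carries either a tail factor $|f_{a,m}|r_n^m$ with $m$ large or a geometric factor $q^{-j/2}$ with $j$ large. But you only assert it.

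For $SV=\mathrm{id}$ you test on single principal parts $(z-a)^{-k-1}$ and extend by density and continuity. This is valid, because truncating the level-$n$ expansion approximates in $H_n$, and restriction $H_n\to H_m$ for $m\le n$ is norm-nonincreasing.

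Your route shows $VS=\mathrm{id}$ as a Cauchy formula under the integral sign. The paper's route avoids both the limit interchange and the density step. In fact your own re-expansion already gives
$c_{a,j}=\sum_{b}\mu\bigl(\mathbf1_{b,n+1}(x)(x-a)^j\bigr)=\mu\bigl(\mathbf1_{a,n}(x)(x-a)^j\bigr)$,
summing over the children $b$ of $a$ and using the binomial theorem. Combined with your formula $VF(f)=\sum_{a,k}c_{a,k}f_{a,k}$, this removes what you call the main technical obstacle.

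Two small corrections.
\begin{itemize}
\item The bound you state, $|c_{a,j}|\le\norm{\mu}_{n+1}r_n^{j}$, gives only boundedness of $|c_{a,j}|r_n^{-j-1}$. Membership in $H_n$ needs these to tend to zero. The sharper bound $\norm{\mu}_{n+1}q^{-nj}$ supplies that, and it follows from the inequality $|b-a|\le q^{-n}<r_n$ that you cite.
\item The ``difficulty'' you raise in the compatibility step does not occur. Compatibility only has to be checked for $f\in B_n$, and such $f$ is already analytic on the parent discs.
\end{itemize}
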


\begin{proof}
Let $\mathbf1_{a,n}$ be the indicator of $a+\pi^n\cO$,
represented as a piecewise constant function on $U_n$, and put
\[
m_{a,k}^{(n)}
=
\mu\bigl(\mathbf1_{a,n}(x)(x-a)^k\bigr).
\]
On the level-$(n+1)$ child discs of $a$, $|x-a|\le q^{-n}$.
Hence
\[
|m_{a,k}^{(n)}|
\le\norm{\mu}_{n+1}q^{-nk}.
\]
Expanding the Cauchy kernel in $B_{n+1}$ gives
\[
S_\mu(z)
=
\sum_{a\in\cA_n}\sum_{k\ge0}
m_{a,k}^{(n)}(z-a)^{-k-1},
\qquad z\in\Omega_n.
\]
Its weighted coefficients satisfy
\[
|m_{a,k}^{(n)}|r_n^{-k-1}
\le
r_n^{-1}\norm{\mu}_{n+1}q^{-k/2}.
\]
This proves membership in $H_n$ and the first estimate.

The contour estimate proves the bound for $V$.
For level compatibility, first take a finite principal-part
sum at level $n+1$.
The parent contour is the sum of its child-pole contributions
by Lemma~\ref{lem:contours}.
The child contours give the same value.
Uniform approximation in $H_{n+1}$ extends the identity
to every compatible $F$.

If $f\in B_n$, coefficient extraction gives
\[
(VS_\mu)(f)
=
\sum_{a,k}f_{a,k}m_{a,k}^{(n)}
=
\mu(f),
\]
where continuity on $B_n$ justifies the last equality.
Conversely, testing $VF$ on
$\mathbf1_{a,n}(x)(x-a)^k$ recovers the coefficient
$c_{a,k}^{(n)}$ of $F_n$ in \eqref{eq:exterior-norm}.
The local Cauchy expansion therefore gives $SVF=F$.
The estimates prove continuity in both directions.
\end{proof}

\begin{remark}[Comparison with the $p$-adic inverse Stieltjes transform]
\label{rem:padic-stieltjes}
Proposition~\ref{prop:duality} uses the same
Shnirelman--Vishik mechanism as \cite[pp.~138--142, Theorem]{Koblitz}
or \cite[Theorem~2.17]{HuKimInverse}; see also the original works
\cite{Shnirelman,Vishik}.
In both characteristics, the restriction $p\nmid N$
ensures $|N|=1$ and hence a norm bound of one for the
averaging operators.
In characteristic $p$, it also ensures that $N$ is nonzero
in the coefficient field and that $X^N-1$ is separable.
In characteristic zero, the roots of $X^N-1$ are distinct
for every positive integer $N$.

The higher-order Cauchy formula illustrates the change
in differential notation.
Over $\Cp$, the corresponding Taylor coefficient is
$f^{(m)}(b)/m!$; here it is the Hasse derivative
$D^{(m)}f(b)$.
The Hasse formulation remains meaningful in characteristic
$p$ without division by $m!$.

In either setting, termwise evaluation by a locally analytic
distribution requires convergence in a fixed analytic Banach
space, not merely uniform convergence on the compact support.
The strict separation of radii in the present disc model
provides that analytic-norm convergence.
\end{remark}

\subsection{The Carlitz coordinate criterion}

Set
\[
[j]_\pi=\pi^{q^j}-\pi,\qquad
\CD_0=1,\qquad
\CD_j=[j]_\pi\CD_{j-1}^q
\quad(j\ge1),
\]
and define
\[
e_j(X)=\prod_{a\in\cA_j}(X-a),
\qquad
f_j(X)=\frac{e_j(X)}{\CD_j}.
\]
In particular, $e_0(X)=f_0(X)=X$.
The polynomials $e_j$ are $\Fq$-linear, since $\cA_j$
is a finite $\Fq$-vector space.
For
\[
n=\sum_{j\ge0}n_jq^j,\qquad 0\le n_j<q,
\]
put
\[
G_n(X)=\prod_{j\ge0}f_j(X)^{n_j}.
\]
Then $G_0=1$ and $\deg G_n=n$.

The continuous Carlitz basis goes back to
\cite{Carlitz,Wagner}; its relation with the digit principle
is explained in \cite{Conrad}.
Weighted versions occur in locally analytic interpolation
theory \cite{Amice, Kochubei}.
We record a direct proof for the precise norms used below,
so that the ensuing norm identity does not depend on an
unstated equivalence of Banach norms.

For $h\ge0$, define
\[
\mathscr E_h
=
\prod_{a\in\cA_h}
\Cinf\left\langle\frac{X-a}{\pi^h}\right\rangle,
\qquad
\eta_{n,h}
=
\sum_{i>h}\left\lfloor\frac n{q^i}\right\rfloor.
\]

\begin{lemma}[Weighted Carlitz bases]
\label{lem:weighted-carlitz-basis}
The polynomials
\[
\pi^{\eta_{n,h}}G_n,\qquad n\ge0,
\]
form an orthonormal basis of $\mathscr E_h$.
Also, $(G_n)_{n\ge0}$ is an orthonormal basis of
$C(\cO,\Cinf)$.
In particular, if $f=\sum_{n\ge0}a_nG_n$, then
\[
f\in\mathscr E_h
\quad\Longleftrightarrow\quad
|a_n|q^{\eta_{n,h}}\longrightarrow0,
\]
and in that case
\[
\norm{f}_{\mathscr E_h}
=
\sup_n |a_n|q^{\eta_{n,h}}.
\]
\end{lemma}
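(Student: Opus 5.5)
We prove the statement about $\mathscr E_h$ first, and then deduce the $C(\cO,\Cinf)$ statement by letting $h\to\infty$. Throughout we use the standard criterion for orthonormal bases over a complete valued field with residue field $\overline{\Fq}$. A family $(\varphi_n)$ in a Banach space $E$ with $\norm{E}\subseteq|\Cinf|$ is an orthonormal basis if and only if two conditions hold. First, each $\norm{\varphi_n}=1$. Second, the reductions $\bar\varphi_n$ form an $\overline{\Fq}$-basis of the reduction $\bar E=E^{\circ}/\fm E^{\circ}$. For $\mathscr E_h$, the unit ball reduces to the space of functions $\cA_h\to\overline{\Fq}[U]$, that is, polynomials on each disc in the normalized variable $U=(X-a)/\pi^h$. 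Hence $\bar E_h\cong\bigoplus_{a\in\cA_h}\overline{\Fq}[U]$.

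\textbf{Step 1: local expansion of $f_j$ on a level-$h$ disc.} Fix $a\in\cA_h$ and write $X=a+\pi^hU$.

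For $j\le h$, $\Fq$-linearity gives $e_j(X)=e_j(a)+e_j(\pi^hU)$. Every term of $e_j(\pi^hU)$ has valuation larger than $v_\pi(\CD_j)$. This uses the classical values $v_\pi(\CD_j)=v_\pi(e_j(\pi^j))=jq^j$, which follow from $|[j]_\pi|=|\pi|$ and the recursion. So $f_j|_{D}$ reduces to the constant $\overline{f_j(a)}$. This constant is the reduction of the $j$-th Carlitz digit functional, an $\Fq$-linear map $\cA_h\to\Fq$ reading off the $j$-th digit of $a$.

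For $j>h$, we have $e_j(X)=\prod_{b\in\cA_j}(a-b+\pi^hU)$. The factors with $b\equiv a\pmod{\pi^h}$ contribute $\pi^{h q^{j-h}}$ times a polynomial in $U$. The remaining factors have fixed valuations. After normalization, $\pi^{-\epsilon_{j,h}}f_j$ reduces to an additive polynomial $P_j(U)$ of degree exactly $q^{j-h}$, with a suitable explicit integer $\epsilon_{j,h}$. I expect $\epsilon_{j,h}$ to be $\sum_{i=h+1}^{j}q^{j-i}$, so that $\sum_j n_j\epsilon_{j,h}=\eta_{n,h}$. In fact $P_j$ should be, up to a unit, the level-$(j-h)$ Carlitz polynomial $\prod_{c\in\cA_{j-h}}(U-c)$ reduced. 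Verifying this valuation bookkeeping, so that it matches $\eta_{n,h}=\sum_{i>h}\lfloor n/q^i\rfloor$ exactly, is the main computation and the step I expect to be the obstacle. I would first prove the case $h=0$ via the identity $v_\pi(\CD_j)=\sum_{i\le j}$ of the usual Carlitz counting, and then shift indices.

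\textbf{Step 2: reduction basis.} Write $n=n'+q^{h}m$ with $n'<q^h$. By Step 1,
\[
\overline{\pi^{\eta_{n,h}}G_n}
=\Bigl(\prod_{j<h}\bar f_j^{\,n_j}\Bigr)\cdot \prod_{j\ge h}P_j(U)^{n_j}.
\]
The first factor depends only on $a$. As $n'$ ranges over $[0,q^h)$, the functions $\prod_{j<h}\bar f_j^{n_j}$ form a basis of all functions $\cA_h\to\Fq$. This is the digit principle: they are monomials in the $h$ coordinate functions of $\Fq^h$ with exponents below $q$. The second factor is a polynomial in $U$ of degree exactly $m$, since $\deg P_j=q^{j-h}$. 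Taking tensor products, these reductions form a basis of $\bigoplus_a\overline{\Fq}[U]$. Each basis element has norm one, so the orthonormal basis claim for $\mathscr E_h$ follows. The norm formula for $f=\sum a_nG_n$ follows immediately: $f=\sum (a_n\pi^{-\eta_{n,h}})(\pi^{\eta_{n,h}}G_n)$ and $|\pi^{-\eta_{n,h}}|=q^{\eta_{n,h}}$.

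\textbf{Step 3: continuous functions.} For $C(\cO,\Cinf)$, the reduction of the unit ball is the space of locally constant functions $\cO\to\overline{\Fq}$. The same digit-principle argument applies at every finite level $h$, now restricted to $n<q^h$, where $\eta_{n,h}=0$. It shows that $\bar G_n$ for $n<q^h$ is a basis of the functions constant on cosets of $\pi^h\cO$. Each $|G_n|_{\sup}=1$ by Step 1. Taking the union over $h$ gives the basis of the reduction, and hence the orthonormal basis of $C(\cO,\Cinf)$, recovering Wagner's theorem.
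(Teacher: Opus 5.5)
\textbf{The gap: the orthonormal-basis criterion over $\Cinf$.} Your plan matches the paper's strategy. You normalize $f_j$ on each level-$h$ disc. For $j<h$ you get constant reductions (digit coordinates), and for $j\ge h$ polynomials of degree $q^{j-h}$ in $U$. The digit principle then shows that the reductions of $\pi^{\eta_{n,h}}G_n$ form a basis of the reduced unit ball. The genuine gap is the criterion you use to pass from reductions to an orthonormal basis. Over $\Cinf$, whose value group is not discrete, the implication ``norm one and reductions form a basis of $\bar E$ $\Rightarrow$ orthonormal basis'' is false, even for $E=c_0(\N)$. Take $\ell_j\in\Cinf$ with $|\ell_j|=q^{-1/(j+1)}$ and put $w_j=e_j-(\ell_j/\ell_{j+1})e_{j+1}$. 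Then $\norm{w_j}=1$ and $\bar w_j=\bar e_j$, so the reductions form a basis of $\bar E$. Yet every $w_j$ lies in the kernel of the continuous functional $x\mapsto\sum_j\ell_jx_j$, which does not vanish at $e_0$.

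Linear independence of reductions only gives orthonormality. Density of the span needs each approximation step to shrink the remainder by a uniform factor. That holds over the discretely valued field $\Kinf$, where norm $<1$ forces norm $\le q^{-1}$. This is exactly why the paper first proves the lemma for the $\Kinf$-forms of $\mathscr E_h$ and $C(\cO,\Kinf)$. All $\pi^{\eta_{n,h}}G_n$ have coefficients in $\Kinf$ and reduce into $\prod_a\Fq[Y]$, so successive lifting modulo powers of $\pi$ works there. The paper then passes to $\Cinf$ by isometric completed scalar extension. Your Steps 2 and 3 need this descent; as written they rest on a false general statement.

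\textbf{The valuation computation.} The step you flag as the obstacle is not carried out, and the one input you give for it is wrong. From $v_\pi([j]_\pi)=1$ and $\CD_j=[j]_\pi\CD_{j-1}^q$ one gets
$v_\pi(\CD_j)=1+q+\cdots+q^{j-1}=(q^j-1)/(q-1)=:w_j$, not $jq^j$. The value $jq^j$ is the degree of Carlitz's $D_j$ in the global $\Fq[T]$ setting, and with it the normalization would not produce $\eta_{n,h}$. With the correct value, for $j\ge h$ one finds $v_\pi(\norm{e_j(\pi^hU)})=\sum_{b\in\cA_j}\min\{h,v_\pi(b)\}=w_j-w_{j-h}$. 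So the normalizing factor is $\pi^{+w_{j-h}}$; your $\pi^{-\epsilon_{j,h}}$ has the wrong sign, although your guess $\epsilon_{j,h}=w_{j-h}$ is right. This then gives $\sum_{j\ge h}n_jw_{j-h}=\eta_{n,h}$.

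\textbf{The case $j=h$.} This case belongs with the nonconstant factors, not the constant ones. The coefficient of $U$ in $e_h(\pi^hU)$ has valuation exactly $w_h=v_\pi(\CD_h)$. Since $f_h(a)=0$ for $a\in\cA_h$, $f_h$ reduces to a unit times $U$ on each level-$h$ disc. Step 2 needs this degree-one factor to reach every degree $m$, so ``$j\le h$'' in Step 1 must be ``$j<h$''.
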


\begin{proof}
We first work over $\Kinf$.
Write
\[
w_j=\frac{q^j-1}{q-1}.
\]
Then $v_\pi(\CD_j)=w_j$.
A direct count gives
\[
\sum_{b\in\cA_j\setminus\{0\}}v_\pi(b)=w_j-j.
\]
Consequently, for $i\ge j$,
\[
v_\pi(f_j(\pi^i))=i-j,
\]
whereas $f_j(\pi^i)=0$ for $i<j$.
Thus, if
$x=\sum_{i\ge0}x_i\pi^i\in\cO$, additivity and continuity give
\[
\overline{f_j(x)}=u_jx_j,
\qquad
u_j=\overline{f_j(\pi^j)}\in\Fq^\times.
\]
Here reduction is modulo $\pi$.

Fix $h$ and write $X=a+\pi^hY$ on the component indexed
by $a\in\cA_h$.
For $j<h$,
\[
f_j(a+\pi^hY)=f_j(a)+f_j(\pi^hY),
\qquad
\norm{f_j(\pi^hY)}=q^{j-h}<1.
\]
Hence its reduction is the constant $\overline{f_j(a)}$.

For $j\ge h$, $e_j(a)=0$ and therefore
\[
e_j(a+\pi^hY)=e_j(\pi^hY).
\]
Using multiplicativity of the Gauss norm,
\[
v_\pi\bigl(\norm{e_j(\pi^hY)}\bigr)
:=
-\log_q\norm{e_j(\pi^hY)}
=
\sum_{b\in\cA_j}\min\{h,v_\pi(b)\}
=
w_j-w_{j-h}.
\]
Thus
\[
Q_{j,h}(Y):=\pi^{w_{j-h}}f_j(\pi^hY)
\]
has Gauss norm one.
Its reduction has degree exactly $q^{j-h}$:
the factors with $v_\pi(b)<h$ reduce to nonzero constants
after normalization, while the $q^{j-h}$ remaining factors
retain their linear dependence on $Y$.

Now write $n=\ell+q^hm$, with $0\le\ell<q^h$.
The identity
\[
\eta_{n,h}
=
\sum_{j\ge h}n_jw_{j-h}
\]
shows that the reduction of $\pi^{\eta_{n,h}}G_n$ is
\[
\bigl(\overline{G_\ell(a)}\,R_m(Y)\bigr)_{a\in\cA_h},
\]
where $R_m$ is independent of $a$ and has degree exactly $m$
with nonzero leading coefficient.
The functions
\[
a\longmapsto \overline{G_\ell(a)},\qquad 0\le\ell<q^h,
\]
form a basis of all $\Fq$-valued functions on $\cA_h$:
up to nonzero scalar factors, they are the monomials of
degree less than $q$ in each of the $h$ digit coordinates.
The polynomials $R_m$, one in each degree, form a basis
of $\Fq[Y]$.
It follows that the reductions of
$\pi^{\eta_{n,h}}G_n$ form a basis of
\[
\prod_{a\in\cA_h}\Fq[Y],
\]
the reduction of the unit ball of the corresponding
$\Kinf$-Banach space.

Since $\Kinf$ is discretely valued, successive lifting
modulo powers of $\pi$ proves that these polynomials form
an orthonormal basis over $\Kinf$.
Linear independence of the reductions gives the exact
norm formula for finite sums, and completeness gives it
for convergent sums.

The same reduction argument proves the continuous assertion
over $\Kinf$.
Every continuous $\Fq$-valued function on $\cO$ factors through
$\cO/\pi^h\cO$ for some $h$, and the reductions of
$G_0,\ldots,G_{q^h-1}$ span the functions on that quotient.

Finally, pass to $\Cinf$ by isometric completed scalar extension:
\[
\mathscr E_h^{(\Kinf)}
\widehat\otimes_{\Kinf}\Cinf
\cong \mathscr E_h,
\qquad
C(\cO,\Kinf)\widehat\otimes_{\Kinf}\Cinf
\cong C(\cO,\Cinf).
\]
These are the natural isometries for the completed
non-archimedean projective tensor norm.
For the first, use the coefficient description of the finitely
many Tate algebras; for the second, use density of locally
constant functions.
An orthonormal basis identifies each space with a $c_0$-space,
and its isometric completed scalar extension carries the
same basis.
This proves all assertions over $\Cinf$.
\end{proof}

These analytic spaces give the same inductive-limit model
as before.
Restriction yields norm-nonincreasing maps
\[
B_h\longrightarrow\mathscr E_h\longrightarrow B_{h+1},
\]
since $q^{-h}<r_h$ and every level-$(h+1)$ disc lies in
its radius-$q^{-h}$ parent disc.
Consequently,
\[
\dirlim_h\mathscr E_h=\cL(\cO)
\]
with the same locally convex topology.

\begin{proposition}[Carlitz coordinates of distributions]
\label{prop:carlitz-criterion}
A sequence $(b_n)$ is of the form
\[
b_n=\mu(G_n),\qquad \mu\in\cL^*(\cO),
\]
if and only if
\begin{equation}\label{eq:carlitz-root}
\limsup_{n\to\infty}|b_n|^{1/n}\le1.
\end{equation}
The distribution is unique, and
\[
\norm{\mu}_{\mathscr E_h^*}
=
\sup_nq^{-\eta_{n,h}}|b_n|.
\]
It is a bounded measure if and only if $\sup_n|b_n|<\infty$.
\end{proposition}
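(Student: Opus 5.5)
We will work entirely through Lemma~\ref{lem:weighted-carlitz-basis} and the identification $\dirlim_h\mathscr E_h=\cL(\cO)$, dualizing the orthonormal basis statement level by level.

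\emph{Dual norm on a single level.} For fixed $h$, the lemma identifies $\mathscr E_h$ isometrically with the weighted $c_0$-space of sequences $(a_n)$ with $|a_n|q^{\eta_{n,h}}\to0$, under $f=\sum a_nG_n$. A continuous functional $\mu$ on $\mathscr E_h$ is therefore determined by $b_n=\mu(G_n)$, since finite Carlitz sums are dense, and $\mu(f)=\sum a_nb_n$. The unit vectors $\pi^{\eta_{n,h}}G_n$ give $\norm{\mu}_{\mathscr E_h^*}\ge\sup_nq^{-\eta_{n,h}}|b_n|$. The ultrametric inequality gives the reverse bound: the series converges and $|\mu(f)|\le\sup_n|a_n|q^{\eta_{n,h}}\cdot\sup_nq^{-\eta_{n,h}}|b_n|$. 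Conversely, any sequence with $\sup_nq^{-\eta_{n,h}}|b_n|<\infty$ defines a functional on $\mathscr E_h$ by this formula. This proves the norm identity and uniqueness at each level.

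\emph{Passing to the inductive limit.} A functional is continuous on $\cL(\cO)=\dirlim\mathscr E_h$ if and only if it is continuous on every $\mathscr E_h$. Uniqueness of $\mu$ follows because all its restrictions are determined by $(b_n)$. It therefore remains to show that
\[
\sup_n q^{-\eta_{n,h}}|b_n|<\infty\ \text{for all }h
\quad\Longleftrightarrow\quad
\limsup|b_n|^{1/n}\le1.
\]
The main computation is the asymptotics of $\eta_{n,h}$. From $\eta_{n,h}=\sum_{i>h}\lfloor n/q^i\rfloor$ we get
\[
\eta_{n,h}\le \frac{n}{q^{h}(q-1)},
\qquad
\eta_{n,h}\ge \frac{n}{q^h(q-1)}-C_h\log n .
\]
The lower bound holds because each floor loses less than $1$ and only $O(\log n)$ terms are nonzero.

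For ($\Leftarrow$), given $h$, take $\varepsilon$ with $q^\varepsilon<q^{1/(q^h(q-1))}$. Then $|b_n|\le C q^{\varepsilon n}$, so
\[
q^{-\eta_{n,h}}|b_n|\le Cq^{\varepsilon n-n/(q^h(q-1))+C_h\log n},
\]
which is bounded. For ($\Rightarrow$), boundedness at level $h$ gives
\[
|b_n|\le C_h q^{n/(q^h(q-1))},
\qquad\text{so}\qquad
\limsup|b_n|^{1/n}\le q^{1/(q^h(q-1))}.
\]
Letting $h\to\infty$ gives the root bound.

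\emph{Measures.} $(G_n)$ is an orthonormal basis of $C(\cO,\Cinf)$ by the lemma, so the same duality gives $\norm{\mu}=\sup|b_n|$ for a bounded measure. Conversely, if $\sup|b_n|<\infty$, then $f\mapsto\sum a_nb_n$ is a bounded measure. It restricts to the distribution with the same Carlitz coordinates, and by uniqueness this is $\mu$.

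The only potentially delicate step is the two-sided estimate on $\eta_{n,h}$. Even there, the crude bounds above suffice because the condition involves only an $n$th root.
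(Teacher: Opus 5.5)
Your proposal is correct and follows the paper's proof essentially step by step. You dualize the weighted orthonormal basis of Lemma~\ref{lem:weighted-carlitz-basis} level by level, compare $\eta_{n,h}$ with $n/((q-1)q^h)$ up to $O(\log n)$ to pass between the levelwise bounds and the root condition, and use the unweighted basis of $C(\cO,\Cinf)$ for the bounded-measure clause. One small detail needs stating: in your lower bound for $\eta_{n,h}$, the infinitely many vanishing floors with $q^i>n$ also lose $n/q^i$ each, and "each floor loses less than $1$" does not control that infinite sum. These losses form a geometric tail bounded by $q/(q-1)$, which the paper notes explicitly, so your bound still holds.
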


\begin{proof}
Lemma~\ref{lem:weighted-carlitz-basis} shows that continuity
on $\mathscr E_h$ is equivalent to
\begin{equation}\label{eq:weighted-moment}
\sup_nq^{-\eta_{n,h}}|b_n|<\infty.
\end{equation}
Indeed, if $f=\sum_na_nG_n\in\mathscr E_h$, then
$|a_n|q^{\eta_{n,h}}\to0$, and
\[
\mu(f)=\sum_na_nb_n
\]
converges under \eqref{eq:weighted-moment}.
Testing the normalized basis gives the stated norm.
The functionals agree between levels because they agree
on polynomials, which are dense in every $\mathscr E_h$.

For fixed $h$,
\[
\eta_{n,h}
=
\frac{n}{(q-1)q^h}+O(\log(n+1)).
\]
The upper estimate follows by omitting floors.
For the lower estimate, only $O(\log n)$ floors are nonzero,
and the remaining geometric tail is bounded.
Condition \eqref{eq:weighted-moment} for every $h$ implies
\[
\limsup_n|b_n|^{1/n}
\le q^{1/((q-1)q^h)}
\]
for every $h$, and hence \eqref{eq:carlitz-root}.

Conversely, assume \eqref{eq:carlitz-root}.
Fix $h$ and choose
\[
0<\varepsilon<\frac1{(q-1)q^h}.
\]
Eventually $|b_n|\le q^{\varepsilon n}$, so
$q^{-\eta_{n,h}}|b_n|\to0$.
Thus \eqref{eq:weighted-moment} holds.
Polynomial density gives uniqueness.

The bounded-measure assertion follows from the unweighted
orthonormal basis of $C(\cO,\Cinf)$.
\end{proof}

Write
\[
G_n(X)=\sum_{k=0}^ng_{n,k}X^k.
\]
This is a triangular change of polynomial basis with
nonzero diagonal.

\begin{corollary}\label{cor:monomial-criterion}
A sequence $(m_k)$ is the ordinary moment sequence of a
distribution on $\cO$ if and only if
\[
b_n=\sum_{k=0}^ng_{n,k}m_k
\]
satisfies \eqref{eq:carlitz-root}.
The distribution is unique.
Every $\mu\in\cL^*(\cO)$ satisfies
\begin{equation}\label{eq:bounded-monomial}
|m_k(\mu)|\le\norm{\mu}_1,
\qquad
m_k(\mu)=\mu(X^k).
\end{equation}
\end{corollary}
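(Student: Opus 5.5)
The corollary follows from Proposition~\ref{prop:carlitz-criterion} by the triangular change of basis between monomials and Carlitz polynomials. The plan has three steps: reduce the moment problem to the Carlitz coordinate problem, settle uniqueness by density of polynomials, and then prove the bound \eqref{eq:bounded-monomial} by a norm computation in $B_1$.

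First, a distribution $\mu\in\cL^*(\cO)$ is determined on polynomials by its ordinary moments. By linearity,
\[
\mu(G_n)=\sum_{k=0}^n g_{n,k}\mu(X^k)=\sum_{k=0}^n g_{n,k}m_k .
\]
Since $\deg G_n=n$ and $g_{n,n}\ne0$, the matrix $(g_{n,k})$ is lower triangular with nonzero diagonal. Hence the correspondence $(m_k)\leftrightarrow(b_n)$ is a bijection of sequences, and the inverse recursion is
\[
m_n=g_{n,n}^{-1}\Bigl(b_n-\sum_{k<n}g_{n,k}m_k\Bigr).
\]
The criterion then follows from Proposition~\ref{prop:carlitz-criterion}:
\begin{itemize}
\item If $(m_k)$ is the moment sequence of some $\mu$, then $b_n=\mu(G_n)$ satisfies \eqref{eq:carlitz-root}.
\item Conversely, if $b_n$ satisfies \eqref{eq:carlitz-root}, the proposition produces $\mu$ with $\mu(G_n)=b_n$. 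Inverting the triangular system shows $\mu(X^k)=m_k$.
\end{itemize}

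Second, uniqueness. Two distributions with the same moments agree on all polynomials. Polynomials are dense in every $\mathscr E_h$, hence in $\cL(\cO)$, so the two distributions coincide. Equivalently, one can invoke the uniqueness clause of Proposition~\ref{prop:carlitz-criterion}.

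Third, the bound $|m_k(\mu)|\le\norm{\mu}_1$. It suffices to show $\norm{X^k}_1\le1$ in $B_1$. Here $\cA_1=\Fq$, $r_1=q^{-1/2}$, and on the component centered at $a\in\Fq$ the binomial theorem gives
\[
X^k=\sum_{j}\binom kj a^{k-j}(X-a)^j .
\]
Each coefficient satisfies $|\binom kj a^{k-j}|\le1$, since $|a|\le1$ and the binomial coefficients lie in $\Fp$. Consequently
\[
\norm{X^k}_1=\max_{a,j}\bigl|\tbinom kj a^{k-j}\bigr|\,r_1^j\le 1,
\]
and therefore $|\mu(X^k)|\le\norm{\mu}_1\norm{X^k}_1\le\norm{\mu}_1$.

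None of these steps is a real obstacle. The only point requiring care is the convention in the definition of $\norm{\cdot}_1$. One must check that the $a=0$ disc is also included, and that the $j=0$ term has norm at most one, which holds because $|a^k|\le1$.
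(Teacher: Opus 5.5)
Your proposal is correct and follows the paper's proof: necessity comes from applying $\mu$ to $G_n$, while sufficiency and uniqueness come from Proposition~\ref{prop:carlitz-criterion} together with the inverse triangular change of basis. The bound \eqref{eq:bounded-monomial} comes from $\norm{X^k}_1\le1$; you check this by binomial expansion on the level-one discs, whereas the paper simply notes that these discs lie in $|X|\le1$.
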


\begin{proof}
Necessity follows by applying $\mu$ to $G_n$.
For sufficiency, construct the distribution from $(b_n)$
by Proposition~\ref{prop:carlitz-criterion}.
The inverse triangular change of basis gives the prescribed
ordinary moments.
Finally, $\norm{X^k}_1\le1$, since the level-one discs
lie in $|X|\le1$.
\end{proof}

Ordinary moments are therefore bounded even when Carlitz
moments are unbounded.
The coefficients in the change of basis are not uniformly
bounded.
Carlitz coordinates test compatibility with the specified
support set and local analyticity.

\subsection{Scalar spectral distributions}

Let $T\in\cB(X)$ be a bounded $\Cinf$-linear operator
on a $\Cinf$-Banach space.
A scalarization is a continuous linear map
\[
\Lambda:\cB(X)\longrightarrow\Cinf.
\]
For example,
\[
\Lambda(B)=\ell(Bu),\qquad u\in X,\quad\ell\in X'.
\]

\begin{definition}\label{def:admissible}
The pair $(T,\Lambda)$ is admissible if $zI-T$ is
invertible for every $z\in\Cinf\setminus\cO$ and
\[
F_{T,\Lambda}(z)=\Lambda((zI-T)^{-1})
\]
belongs to $\cH_0(\cO)$.
Its scalar spectral distribution is
\[
\mu_{T,\Lambda}=V(F_{T,\Lambda}).
\]
\end{definition}

\begin{proposition}\label{prop:spectral-moments}
For $\mu\in\cL^*(\cO)$,
\[
S_\mu(z)=\sum_{k\ge0}m_k(\mu)z^{-k-1},
\qquad |z|>1.
\]
For an admissible pair,
\[
m_k(\mu_{T,\Lambda})=\Lambda(T^k).
\]
\end{proposition}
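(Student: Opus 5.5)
For the first identity I expand the Cauchy kernel geometrically. For $|z|>1$ we have
\[
(z-x)^{-1}=\sum_{k\ge0}x^kz^{-k-1},
\]
and I need this expansion to converge in one fixed Banach space $B_n$, not merely uniformly on $\cO$ (compare Remark~\ref{rem:padic-stieltjes}). At level $n=1$ the only disc is $D(0,r_1)$ with $r_1=q^{-1/2}$, so $\norm{x^k}_1=r_1^k\le 1$. Hence
\[
\norm{x^kz^{-k-1}}_1\le |z|^{-k-1}\to 0,
\]
and the partial sums converge in $B_1$ to the function $(z-x)^{-1}$. On $D(0,r_1)$ this function is analytic because $|z|>1>r_1$, and its Taylor coefficients are exactly $z^{-k-1}$. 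Since $\mu$ is continuous on $B_1\to\cL(\cO)$, I can apply it termwise and get $S_\mu(z)=\sum_k m_k(\mu)z^{-k-1}$. The bound \eqref{eq:bounded-monomial} makes the series absolutely convergent. This is the only delicate step, and the single level-one disc makes it routine.

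For the spectral moments, I first note that $\cO\subset D(0,1)$, so every $|z|>1$ lies outside $\cO$ and $zI-T$ is invertible by admissibility. By the first part together with $S\circ V=\mathrm{id}$ from Proposition~\ref{prop:duality}, for $|z|>1$ we have
\[
F_{T,\Lambda}(z)=S_{\mu_{T,\Lambda}}(z)=\sum_k m_k(\mu_{T,\Lambda})z^{-k-1}.
\]
On the other hand, for $|z|$ larger than the spectral radius, indeed for $|z|>\norm{T}$, the Neumann series
\[
(zI-T)^{-1}=\sum_k T^kz^{-k-1}
\]
converges in $\cB(X)$. Continuity and linearity of $\Lambda$ then give $F_{T,\Lambda}(z)=\sum_k\Lambda(T^k)z^{-k-1}$ for $|z|>\max(1,\norm{T})$.

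Two Laurent series in $z^{-1}$ that agree on a region $|z|>R$ have equal coefficients. I would extract them by coefficient extraction, for instance the Shnirelman average of Lemma~\ref{lem:contours} on a circle of radius $|\gamma|>R$, or simply by the identity principle applied in the variable $w=z^{-1}$ near $0$. This yields $m_k(\mu_{T,\Lambda})=\Lambda(T^k)$ for every $k$. I expect no real obstacle here beyond making sure the Neumann region and the disc $|z|>1$ overlap, which they always do.
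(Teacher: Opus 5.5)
Your argument is the paper's: expand $(z-x)^{-1}$ geometrically in $B_1$ and apply $\mu$ termwise, expand the resolvent by the Neumann series for $|z|>\norm{T}$, and compare coefficients on the common region.

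One factual slip needs correcting. Level one is not a single disc. Since $\cA_1=\Fq$, the space $B_1$ is the product of the $q$ Tate algebras on the discs $D(a,q^{-1/2})$, $a\in\Fq$. The lone disc $D(0,q^{1/2})$ is level zero, and $D(0,q^{-1/2})$ alone would not even contain $\cO$. Consequently $\norm{x^k}_1=1$, not $r_1^k$.

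The bound $\norm{x^k}_1\le1$ that you actually use still holds. Re-expanding $x^k$ at $a\in\Fq$ gives coefficients $\binom kj a^{k-j}$ of absolute value at most one; equivalently, as the paper puts it, the level-one discs lie in $|x|\le1$. Also, $(z-x)^{-1}$ is analytic on each level-one disc, since $|z-x|=|z|$ there. With this correction the rest of your proof goes through unchanged.
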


\begin{proof}
The geometric expansion of $(z-X)^{-1}$ converges in $B_1$
for $|z|>1$.
For $|z|>\norm{T}$, the operator Neumann series gives
\[
F_{T,\Lambda}(z)
=
\sum_{k\ge0}\Lambda(T^k)z^{-k-1}.
\]
Comparison in the common convergence region proves the claim.
\end{proof}

\begin{example}[A nilpotent block]
\label{ex:jordan}
Let $T=\alpha I+J$ on a finite-dimensional space, where
$\alpha\in\cO$ and $J^r=0$.
Then
\[
(zI-T)^{-1}
=
\sum_{j=0}^{r-1}\frac{J^j}{(z-\alpha)^{j+1}},
\]
and
\[
\mu_{T,\Lambda}(f)
=
\sum_{j=0}^{r-1}\Lambda(J^j)D^{(j)}f(\alpha).
\]
The right-hand side is a locally analytic distribution whose
Stieltjes transform is the scalar resolvent.

This explains why generalized distributions, rather than
only bounded measures, arise naturally.
A scalarization may retain nilpotent information through
Hasse derivatives.
For the ordinary matrix trace, $\Tr(J^j)=0$ for $j\ge1$.
\end{example}

%======================================================================
\section{Mellin transforms and reconstruction}
\label{sec:mellin}
%======================================================================

\subsection{Principal-unit powers}

For $y\in\Zp$ and $k\ge0$, let
\[
\rbinom yk\in\Fp\subset\Cinf
\]
be the reduction of $\binom yk\in\Zp$.
For $u=1+w$ with $|w|<1$, define
\begin{equation}\label{eq:unit-power}
u^y=\sum_{k\ge0}\rbinom yk w^k.
\end{equation}
The series converges uniformly in $y$, since its $k$th term
has absolute value at most $|w|^k$.

We use the congruence
\begin{equation}\label{eq:binomial-congruence}
y\equiv y'\pmod{p^N}
\quad\Longrightarrow\quad
\rbinom yk=\rbinom{y'}k
\quad(0\le k<p^N).
\end{equation}
For nonnegative integer exponents, this follows from
$(1+X)^{p^N}=1+X^{p^N}$ in $\Fp[[X]]$.
Continuity of binomial coefficients gives the general case.
The power operation is jointly continuous for
$|w|\le\rho<1$ and $y\in\Zp$.
The usual identities extend by continuity:
\[
u^{y+y'}=u^yu^{y'},
\qquad
(uv)^y=u^yv^y,
\qquad
(u^y)^{y'}=u^{yy'}.
\]

Every group homomorphism
\[
1+\fm\longrightarrow(\Cinf,+)
\]
is zero.
Indeed, every principal unit has a principal-unit $p$th root,
whereas the additive target is killed by $p$.
This observation does not concern the Carlitz logarithm,
which linearizes a different algebraic action.

The natural ring map $\Zp\to\Cinf$ factors through $\Fp$,
whereas \eqref{eq:unit-power} generally depends on all
$p$-adic digits of $y$.
We therefore regard $y$ as a continuous $\Zp$-parameter,
not as a $\Cinf$-analytic variable.

\subsection{Quantitative Mellin calculus}

\begin{definition}\label{def:mellin}
For $\mu\in\cL^*(\cO)$, set
\[
\cM_\mu(y,t)=\mu_x((1+tx)^y),
\qquad y\in\Zp,\quad |t|<1,
\]
and
\[
\zeta_\infty^\mu(y,\lambda)
=
\cM_\mu(-y,\lambda^{-1}),
\qquad |\lambda|>1.
\]
For an admissible pair, take $\mu=\mu_{T,\Lambda}$.
\end{definition}

For $0<\rho<1$ in the value group, let $\norm{\cdot}_\rho$
denote the Gauss norm on the radius-$\rho$ Tate algebra.
We write this algebra as $\Cinf\langle t/\rho\rangle$;
a scalar of absolute value $\rho$ is understood in the
Tate coordinate.

\begin{theorem}[Quantitative Mellin calculus]
\label{thm:mellin-calculus}
For $\mu\in\cL^*(\cO)$,
\begin{equation}\label{eq:mellin-expansion}
\cM_\mu(y,t)
=
\sum_{k\ge0}\rbinom yk m_k(\mu)t^k.
\end{equation}
The transform is analytic in $t$ and continuous in $y$.
If $y\equiv y'\pmod{p^N}$, then
\begin{equation}\label{eq:sharp-continuity}
\norm{\cM_\mu(y,\cdot)-\cM_\mu(y',\cdot)}_\rho
\le
\norm{\mu}_1\rho^{p^N}.
\end{equation}
This bound is sharp as a uniform statement over distributions.

Define $\Delta_{p^N}F(y)=F(y+p^N)-F(y)$.
For $r,N\in\N_0$,
\begin{equation}\label{eq:exponent-difference}
\Delta_{p^N}^{\,r}\cM_\mu(y,t)
=
t^{rp^N}
\mu_x\bigl(x^{rp^N}(1+tx)^y\bigr).
\end{equation}
The Hasse derivatives in $t$ satisfy
\begin{equation}\label{eq:hasse-mellin}
D_t^{(r)}\cM_\mu(y,t)
=
\rbinom yr
\mu_x\bigl(x^r(1+tx)^{y-r}\bigr).
\end{equation}
\end{theorem}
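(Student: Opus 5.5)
The plan is to reduce every assertion to a statement about the kernel $(1+tx)^y$ as an element of $B_1$, and then pass $\mu$ through termwise. Fix $0<\rho<1$ and $|t|\le\rho$. On the level-one discs we have $|x|\le r_1=q^{-1/2}$, and $\norm{x^k}_1\le1$. Then \eqref{eq:unit-power} gives
\[
(1+tx)^y=\sum_{k\ge0}\rbinom yk t^kx^k ,
\]
and this series converges in the Banach space $B_1$, since its $k$th term has norm at most $|t|^k\le\rho^k$. Continuity of $\mu$ on $B_1$ then justifies applying $\mu$ termwise, which yields \eqref{eq:mellin-expansion}. By \eqref{eq:bounded-monomial}, the coefficients satisfy $|\rbinom yk m_k|\le\norm{\mu}_1$. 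Hence the series lies in $\Cinf\langle t/\rho\rangle$ for every $\rho<1$, which gives analyticity in $t$.

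For \eqref{eq:sharp-continuity}, I subtract the two expansions. By \eqref{eq:binomial-congruence}, all coefficients with $k<p^N$ cancel. The remaining Gauss norm is at most $\sup_{k\ge p^N}\norm{\mu}_1\rho^k=\norm{\mu}_1\rho^{p^N}$. Continuity in $y$ follows from this bound, uniformly for $|t|\le\rho$.

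For sharpness, I would take $\mu=\delta_1$. Then $\norm{\mu}_1=1$, since evaluation at $1$ has norm one on $B_1$ (test on the constant $1$). Now take $y=0$ and $y'=p^N$. Then $\cM(y',t)-\cM(y,t)=(1+t)^{p^N}-1=t^{p^N}$, whose Gauss norm is exactly $\rho^{p^N}$. So equality holds in \eqref{eq:sharp-continuity}.

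For \eqref{eq:exponent-difference}, I first work at the kernel level. In characteristic $p$,
\[
(1+tx)^{y+p^N}-(1+tx)^y=(1+tx)^y\bigl((1+tx)^{p^N}-1\bigr)=(tx)^{p^N}(1+tx)^y,
\]
using $u^{y+y'}=u^yu^{y'}$ and $(1+w)^{p^N}=1+w^{p^N}$. Iterating $r$ times, with induction on $r$, gives $\Delta_{p^N}^r(1+tx)^y=t^{rp^N}x^{rp^N}(1+tx)^y$ as an identity in $B_1$. Applying the continuous linear functional $\mu$ then gives the formula, since $\Delta$ commutes with $\mu$ by linearity.

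For \eqref{eq:hasse-mellin}, I would differentiate the expansion \eqref{eq:mellin-expansion} termwise. Hasse derivatives of a convergent power series act coefficientwise: the coefficient of $t^{k-r}$ in $D_t^{(r)}$ is $\binom kr\rbinom yk m_k$. On the other side, I expand the right-hand side using $(1+tx)^{y-r}=\sum_j\rbinom{y-r}j t^jx^j$, which gives coefficient $\rbinom yr\rbinom{y-r}{k-r}m_k$. It then suffices to prove
\[
\binom kr\binom yk=\binom yr\binom{y-r}{k-r}
\]
in $\Zp$ and reduce modulo $p$. This identity is a polynomial identity in $y$, so it holds for integer $y\ge0$ and extends by density and continuity. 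An alternative is to verify the Hasse-derivative identity directly on the kernel, via $D_t^{(r)}(1+tx)^y=\rbinom yr x^r(1+tx)^{y-r}$, which again reduces to the same binomial identity.

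I expect the main technical point to be the justification of termwise application of $\mu$. This requires convergence in the fixed Banach space $B_1$, not just pointwise convergence on $\cO$, as Remark~\ref{rem:padic-stieltjes} emphasizes. The geometric bound $\rho^k$ on $\norm{t^kx^k}_1$ handles it. The other point needing care is checking that $\norm{\delta_1}_1=1$ in the sharpness example.
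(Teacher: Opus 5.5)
Your proposal is correct and follows the paper's proof essentially step for step. The shared steps are: termwise application of $\mu$ to the kernel expansion in $B_1$; the congruence \eqref{eq:binomial-congruence} together with $\delta_1$, $y=0$, $y'=p^N$ for sharpness; the kernel identity $\bigl((1+tx)^{p^N}-1\bigr)^r=t^{rp^N}x^{rp^N}$; and the $\Zp$ binomial identity for the Hasse derivatives. One harmless slip: the level-one discs are centered at every $a\in\Fq$, so on them only $|x|\le1$ holds, not $|x|\le r_1$. The bound $\norm{x^k}_1\le1$ that you actually use is still correct, and the missing upper bound $\norm{\delta_1}_1\le1$ follows from $|f(1)|=|f_{1,0}|\le\norm{f}_1$.
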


\begin{proof}
Since $\norm{X^k}_1\le1$, the series
\[
(1+tX)^y=\sum_{k\ge0}\rbinom yk t^kX^k
\]
converges in $B_1$, uniformly for $y\in\Zp$ and
$|t|\le\rho<1$.
Applying $\mu$ gives \eqref{eq:mellin-expansion}.
Boundedness of ordinary moments gives analytic convergence.

By \eqref{eq:binomial-congruence}, a difference between
congruent exponents begins in degree at least $p^N$,
proving \eqref{eq:sharp-continuity}.
For $\mu=\delta_1$,
\[
\cM_{\delta_1}(p^N,t)-\cM_{\delta_1}(0,t)
=
(1+t)^{p^N}-1=t^{p^N},
\]
and $\norm{\delta_1}_1=1$, giving equality.

A shift of $y$ by $p^N$ multiplies the kernel by
$(1+tx)^{p^N}$.
Its $r$th difference therefore multiplies it by
\[
\bigl((1+tx)^{p^N}-1\bigr)^r
=
t^{rp^N}x^{rp^N},
\]
which proves \eqref{eq:exponent-difference}.

Termwise Hasse differentiation gives
\[
D_t^{(r)}\cM_\mu(y,t)
=
\sum_{k\ge r}\binom kr\rbinom yk m_k(\mu)t^{k-r}.
\]
The identity
\[
\binom yk\binom kr
=
\binom yr\binom{y-r}{k-r}
\]
holds in $\Zp$ by continuity from integer exponents.
Reduction and reindexing prove \eqref{eq:hasse-mellin}.
\end{proof}

\begin{remark}[Exponent calculus in the two characteristics]
\label{rem:padic-mellin-calculus}
For a $\Cp$-valued locally analytic distribution $\eta$ on $\Zp$,
write
\[
\cM_{\eta,p}(y,t)
=
\eta_x((1+tx)^y),
\qquad y\in\Zp,\quad |t|_p<1,
\]
using ordinary $p$-adic binomial coefficients.
On sufficiently small exponent neighborhoods,
\[
\frac{\partial^r}{\partial y^r}\cM_{\eta,p}(y,t)
=
\eta_x\bigl((1+tx)^y(\log_p(1+tx))^r\bigr).
\]
This logarithmic exponent calculus underlies the regularity
results in \cite[Section~3]{HuKimInverse}.

Our transform instead has a continuous $\Zp$-exponent and
an analytic $t$-parameter.
The exact difference identity \eqref{eq:exponent-difference}
replaces part of the role of exponent differentiation.
In characteristic zero the corresponding identity is
\[
\Delta_{p^N}^{\,r}\cM_{\eta,p}(y,t)
=
\eta_x\bigl((1+tx)^y((1+tx)^{p^N}-1)^r\bigr),
\]
but $(1+tx)^{p^N}-1$ does not reduce to $t^{p^N}x^{p^N}$.
That reduction and the exact degree cutoff are specific to
characteristic $p$.
Likewise, \eqref{eq:hasse-mellin} is the factorial-free
counterpart of the identity involving $(1/r!)\partial_t^r$.
\end{remark}

\begin{corollary}[Special values and the resolvent slice]
\label{cor:special-values}
Put
\[
P_m^\mu(\lambda)=\mu_x((\lambda+x)^m).
\]
For $m\ge0$ and $|\lambda|>1$,
\[
\zeta_\infty^\mu(-m,\lambda)
=
\lambda^{-m}P_m^\mu(\lambda),
\qquad
\left.D_t^{(m)}\cM_\mu(m,t)\right|_{t=0}=m_m(\mu).
\]
Moreover,
\begin{equation}\label{eq:zeta-resolvent}
\zeta_\infty^\mu(1,\lambda)
=
-\lambda S_\mu(-\lambda).
\end{equation}
For an admissible pair, the last expression is
\[
\lambda\Lambda((\lambda I+T)^{-1}).
\]
\end{corollary}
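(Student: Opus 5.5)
The plan is to reduce all three assertions to the binomial expansion \eqref{eq:mellin-expansion} of Theorem~\ref{thm:mellin-calculus}, together with Proposition~\ref{prop:spectral-moments} for the resolvent identity.

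\emph{First identity.} For $y=-m$ with $m\ge0$ and $t=\lambda^{-1}$, $|t|<1$, the kernel is $(1+x/\lambda)^{m}$. The power \eqref{eq:unit-power} at the nonnegative integer exponent $m$ is the polynomial $\sum_{k\le m}\rbinom mk w^k$, which equals the ordinary power $(1+w)^m$. Hence the kernel is $\lambda^{-m}(\lambda+x)^m$, a polynomial in $x$. Applying $\mu$ by linearity gives $\zeta_\infty^\mu(-m,\lambda)=\lambda^{-m}P_m^\mu(\lambda)$. No convergence issue arises.

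\emph{Second identity.} From \eqref{eq:mellin-expansion} with $y=m$, the Hasse derivative $D_t^{(m)}$ at $t=0$ extracts the coefficient of $t^m$. That coefficient is $\rbinom mm m_m(\mu)=m_m(\mu)$. Alternatively one can evaluate \eqref{eq:hasse-mellin} at $t=0$, where the kernel $(1+0\cdot x)^{0}$ equals $1$.

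\emph{Resolvent slice.} Take $y=1$, so the kernel is $(1+x/\lambda)^{-1}$. Since $\rbinom{-1}k=(-1)^k$, \eqref{eq:mellin-expansion} gives
\[
\zeta_\infty^\mu(1,\lambda)=\sum_{k\ge0}(-1)^k m_k(\mu)\lambda^{-k}.
\]
On the other side, Proposition~\ref{prop:spectral-moments} at $z=-\lambda$, with $|z|>1$, gives
\[
-\lambda S_\mu(-\lambda)=-\lambda\sum_{k\ge0}m_k(\mu)(-\lambda)^{-k-1}=\sum_{k\ge0}(-1)^k m_k(\mu)\lambda^{-k}.
\]
The two series agree, which proves \eqref{eq:zeta-resolvent}.

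\emph{Admissible pairs.} By Definition~\ref{def:admissible} and Proposition~\ref{prop:duality}, $S_{\mu_{T,\Lambda}}=F_{T,\Lambda}$. Since $-\lambda\notin\cO$, we get
\[
-\lambda\Lambda((-\lambda I-T)^{-1})=\lambda\Lambda((\lambda I+T)^{-1}).
\]

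The only point needing care is the sign bookkeeping together with the identification of the reduced binomials, namely $\rbinom{-1}k=(-1)^k$ and the agreement of the integer-exponent power with the ordinary polynomial power. Both follow from \eqref{eq:binomial-congruence} and the definition, so I expect no real obstacle.
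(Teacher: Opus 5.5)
Your proof is correct and matches the paper's argument for the two special-value identities: at a nonnegative integer exponent the reduced binomial series terminates. For \eqref{eq:zeta-resolvent} the paper applies $\mu$ directly to the kernel identity $(1+x/\lambda)^{-1}=-\lambda(-\lambda-x)^{-1}$, whereas you compare the moment expansions from Theorem~\ref{thm:mellin-calculus} and Proposition~\ref{prop:spectral-moments}, which is the same geometric-series computation carried out coefficientwise; your use of $S_{\mu_{T,\Lambda}}=F_{T,\Lambda}$ from Proposition~\ref{prop:duality} at $z=-\lambda\notin\cO$ correctly yields the admissible-pair form without needing a Neumann series for $|\lambda|>\norm{T}$.
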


\begin{proof}
At a nonnegative integer exponent, the binomial series terminates.
The resolvent assertion follows from
\[
(1+x/\lambda)^{-1}
=
-\lambda(-\lambda-x)^{-1}.
\]
\end{proof}

\begin{remark}[Normalization and comparison of special values]
\label{rem:padic-zeta-normalization}
For a $\Cp$-valued distribution $\eta$ on a compact set $\Sigma$,
put
\[
\mathcal R_{\eta,p}(y,\lambda)
=
\eta_x((1+x/\lambda)^{-y}),
\qquad
|\lambda|_p>\sup_{x\in\Sigma}|x|_p.
\]
Since
\[
\langle\lambda+x\rangle
=
\langle\lambda\rangle(1+x/\lambda),
\]
the definitions in \cite[Definitions~3.2 and~3.12]{HuKimInverse}
give, on their respective domains,
\[
\mathcal R_{\mu,p}(y,\lambda)
=
y\langle\lambda\rangle^y\zeta_p^\mu(1+y,\lambda),
\qquad y\ne0,
\]
and
\[
\mathcal R_{\nu,p}(y,\lambda)
=
\langle\lambda\rangle^y\zeta_{p,E}^\nu(1+y,\lambda).
\]
The first product extends across $y=0$, with value $\mu(1)$.
The comparison involves the shift $s=1+y$ and removal of
the factor depending only on $\lambda$.

Writing $\omega_v(\lambda)=\lambda/\langle\lambda\rangle$
and $P_m^\eta(\lambda)=\eta_x((\lambda+x)^m)$, the special
values become
\[
\zeta_p^\mu(1-m,\lambda)
=
-\frac{\omega_v(\lambda)^{-m}}{m}P_m^\mu(\lambda),
\qquad m\ge1,
\]
and
\[
\zeta_{p,E}^\nu(1-m,\lambda)
=
\omega_v(\lambda)^{-m}P_m^\nu(\lambda),
\qquad m\ge0.
\]
Our relative transform instead satisfies
\[
\zeta_\infty^\mu(-m,\lambda)
=
\lambda^{-m}P_m^\mu(\lambda).
\]
The common content is evaluation on a polynomial kernel;
the other factors belong to the chosen normalizations.
Boundedness or unboundedness of a distribution does not,
by itself, force a factor $(s-1)^{-1}$.

For general distributions these are moment polynomials.
Classical Bernoulli or Euler polynomials arise only for
the corresponding Volkenborn or fermionic choices in
characteristic zero; no such identification is implicit here.
\end{remark}

\subsection{Reconstruction from a single exponent}

At $y=-1$, the reduced binomial coefficients are $(-1)^k$.
Thus uniqueness from this slice is a moment statement.
The range criterion below is the additional assertion:
it characterizes which formal coefficient sequences define
a distribution on the particular compact set $\cO$.

\begin{theorem}[Mellin reconstruction]
\label{thm:mellin-reconstruction}
Let
\[
H(t)=\sum_{k\ge0}a_kt^k\in\Cinf[[t]],
\qquad
b_n(H)=\sum_{k=0}^ng_{n,k}(-1)^ka_k.
\]
The following are equivalent:
\begin{enumerate}[(i)]
\item
There exists $\mu\in\cL^*(\cO)$ such that $H$ is the
Taylor series of $\cM_\mu(-1,t)$.
\item
\[
\limsup_{n\to\infty}|b_n(H)|^{1/n}\le1.
\]
\end{enumerate}
When these conditions hold, $\mu$ is unique,
the coefficients $a_k$ are bounded, and $H$ converges
on $|t|<1$.
The full Mellin family is
\begin{equation}\label{eq:reconstruction}
\cM_\mu(y,t)
=
\sum_{k\ge0}\rbinom yk(-1)^ka_kt^k.
\end{equation}
Furthermore,
\[
S_\mu(z)=z^{-1}H(-z^{-1}),\qquad |z|>1,
\]
and this function has an exterior continuation in $\cH_0(\cO)$.
\end{theorem}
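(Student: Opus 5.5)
The plan is to reduce everything to Corollary~\ref{cor:monomial-criterion} via the identification of the $y=-1$ Taylor coefficients with signed ordinary moments. By Theorem~\ref{thm:mellin-calculus}, for any $\mu\in\cL^*(\cO)$ the expansion \eqref{eq:mellin-expansion} at $y=-1$ reads
\[
\cM_\mu(-1,t)=\sum_{k\ge0}\rbinom{-1}{k}m_k(\mu)t^k=\sum_{k\ge0}(-1)^km_k(\mu)t^k,
\]
since $\binom{-1}{k}=(-1)^k$ already in $\Z$. Hence (i) holds for a given $\mu$ exactly when $a_k=(-1)^km_k(\mu)$ for all $k$, that is, when $m_k:=(-1)^ka_k$ is the ordinary moment sequence of $\mu$. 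Note that $(-1)^{2k}=1$, so $a_k=(-1)^km_k$ and $m_k=(-1)^ka_k$ are equivalent even in characteristic $2$.

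For (i)$\Rightarrow$(ii), I apply the necessity half of Corollary~\ref{cor:monomial-criterion}. With this choice, $b_n(H)=\sum_k g_{n,k}m_k=\mu(G_n)$, and Proposition~\ref{prop:carlitz-criterion} gives \eqref{eq:carlitz-root}, which is exactly (ii). For (ii)$\Rightarrow$(i), the sufficiency half of the same corollary produces a unique $\mu$ with $m_k(\mu)=(-1)^ka_k$, and the displayed expansion then shows $H$ is the Taylor series of $\cM_\mu(-1,t)$. Uniqueness of $\mu$ comes from the uniqueness in Corollary~\ref{cor:monomial-criterion}, because (i) pins down all ordinary moments. Boundedness of the $a_k$ follows from \eqref{eq:bounded-monomial}: $|a_k|=|m_k(\mu)|\le\norm{\mu}_1$. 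Bounded coefficients give convergence on $|t|<1$.

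The full Mellin family \eqref{eq:reconstruction} is obtained by substituting $m_k(\mu)=(-1)^ka_k$ into \eqref{eq:mellin-expansion} for general $y$. For the resolvent statement, Proposition~\ref{prop:spectral-moments} gives $S_\mu(z)=\sum_k m_kz^{-k-1}$ for $|z|>1$. Rewriting this,
\[
z^{-1}H(-z^{-1})=\sum_k a_k(-1)^kz^{-k-1}=\sum_k m_kz^{-k-1}=S_\mu(z),
\]
which converges since $|z^{-1}|<1$. The exterior continuation is $S_\mu\in\cH_0(\cO)$ itself, by Proposition~\ref{prop:duality}, and it agrees with this series on $|z|>1\subset\Omega_n$.

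I expect no step to be a genuine obstacle; the substance lies in Corollary~\ref{cor:monomial-criterion}, which is assumed. The only points needing care are bookkeeping. First, the Taylor series of $\cM_\mu(-1,\cdot)$ is the series \eqref{eq:mellin-expansion} itself, so comparing coefficients is legitimate by uniqueness of power series expansions on $|t|<1$. Second, the region $|z|>1$ lies in every $\Omega_n$, so the exterior function restricts to the Laurent series there.
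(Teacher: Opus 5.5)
Your proposal is correct and is essentially the paper's proof. Both arguments identify $b_n(H)$ with $\mu(G_n)$ via $a_k=(-1)^km_k(\mu)$, apply the Carlitz-coordinate criterion (you cite Corollary~\ref{cor:monomial-criterion}, the paper uses Proposition~\ref{prop:carlitz-criterion} plus the triangular change of basis, which is the same content), and then read off boundedness, the Mellin family and the Stieltjes identity, where you use Proposition~\ref{prop:spectral-moments} and the paper uses $(1-x/z)^{-1}=z(z-x)^{-1}$. One harmless slip: $|z|>1$ is not contained in $\Omega_0=\{|z|\ge q^{1/2}\}\cup\{\infty\}$, only in $\Omega_n$ for $n\ge1$, and that is all your final remark needs.
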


\begin{proof}
If (i) holds, then
\[
H(t)=\sum_{k\ge0}(-1)^km_k(\mu)t^k,
\]
so $b_n(H)=\mu(G_n)$.
Proposition~\ref{prop:carlitz-criterion} gives (ii).

Conversely, (ii) constructs a unique distribution with
Carlitz moments $b_n(H)$.
The triangular change of basis gives
$m_k(\mu)=(-1)^ka_k$.
Equation \eqref{eq:bounded-monomial} implies boundedness
of $(a_k)$, and Theorem~\ref{thm:mellin-calculus} gives
\eqref{eq:reconstruction}.
Finally,
\[
(1-x/z)^{-1}=z(z-x)^{-1}
\]
gives the Stieltjes identity, and
Proposition~\ref{prop:duality} gives its exterior continuation.
\end{proof}

\begin{corollary}[A resolvent-germ criterion]
\label{cor:resolvent-germ}
A convergent germ at infinity
\[
F(z)=\sum_{k\ge0}m_kz^{-k-1}
\]
extends to an element of $\cH_0(\cO)$ if and only if
\[
\limsup_{n\to\infty}
\left|\sum_{k=0}^ng_{n,k}m_k\right|^{1/n}\le1.
\]
The extension is unique.
\end{corollary}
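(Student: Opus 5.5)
The plan is to reduce Corollary~\ref{cor:resolvent-germ} to Proposition~\ref{prop:duality} and Corollary~\ref{cor:monomial-criterion}. The link between them is the moment expansion in Proposition~\ref{prop:spectral-moments}.

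First I would pin down the notion of extension. An element $\Phi\in\cH_0(\cO)$ is a compatible family $(\Phi_n)$ with $\Phi_n\in H_n$. Every $\Omega_n$ contains the region $|z|>1$, since all discs $D(a,r_n)$ with $a\in\cA_n$ lie in $|z|\le1$ (for $n=0$ the single disc has radius $q^{1/2}$, so there one uses $\Omega_1$ or any $n\ge1$). By compatibility these restrictions agree. So ``$F$ extends to $\Phi$'' means $\Phi_n=F$ on $|z|>1$ for $n\ge1$. The germ $F$ converges near infinity, so this is also the same as agreement on some region $|z|>R$. By the identity principle for Laurent expansions at infinity, agreement there is equivalent to agreement on $|z|>1$, because elements of $H_n$ ($n\ge1$) are given by convergent Laurent series in $z^{-1}$ on $|z|>1$.

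For necessity, suppose $F$ extends to $\Phi\in\cH_0(\cO)$. Put $\mu=V\Phi$, so $S_\mu=\Phi$ by Proposition~\ref{prop:duality}. Proposition~\ref{prop:spectral-moments} gives $S_\mu(z)=\sum_k m_k(\mu)z^{-k-1}$ for $|z|>1$. Comparing Laurent coefficients at infinity with the given germ yields $m_k(\mu)=m_k$ for all $k$. Then $\mu(G_n)=\sum_k g_{n,k}m_k$, and Proposition~\ref{prop:carlitz-criterion} gives the $\limsup$ bound.

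For sufficiency, assume the bound. Corollary~\ref{cor:monomial-criterion} gives a distribution $\mu$ with ordinary moments $m_k$. Then $S_\mu\in\cH_0(\cO)$ by Proposition~\ref{prop:duality}. Its expansion on $|z|>1$ is $\sum m_kz^{-k-1}=F$ by Proposition~\ref{prop:spectral-moments}, so $S_\mu$ extends $F$.

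For uniqueness, take two extensions $\Phi,\Phi'$. They have the same Laurent expansion at infinity, so $V\Phi$ and $V\Phi'$ have equal ordinary moments. Uniqueness in Corollary~\ref{cor:monomial-criterion} (polynomial density) makes these distributions equal, and applying $S$ gives $\Phi=\Phi'$.

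The only delicate step is justifying the coefficient comparison at infinity, namely that an element of $H_n$ restricted to $|z|>1$ is determined by its $z^{-1}$-expansion. I would handle this by expanding each principal part $(z-a)^{-k-1}$ geometrically in $z^{-1}$, using $|a|\le1$. This gives a convergent rearrangement on $|z|>1$, controlled by the weighted decay in \eqref{eq:exterior-norm}. Alternatively, and more simply, I would invoke Proposition~\ref{prop:spectral-moments} directly for $S_{V\Phi}$, which already carries this out.
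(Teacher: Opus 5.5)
Your proposal is correct and is essentially the paper's argument. The paper applies Theorem~\ref{thm:mellin-reconstruction} to $H(t)=\sum_{k\ge0}(-1)^km_kt^k$, so that $b_n(H)=\sum_k g_{n,k}m_k$ and $S_\mu(z)=z^{-1}H(-z^{-1})=F(z)$, and combines this with inverse Stieltjes duality; that theorem's proof uses exactly the ingredients you cite directly (Proposition~\ref{prop:carlitz-criterion}, the triangular change of basis in Corollary~\ref{cor:monomial-criterion}, and Proposition~\ref{prop:duality}), and your remarks on which $\Omega_n$ contain $|z|>1$ and on comparing coefficients at infinity spell out details the paper leaves implicit.
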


\begin{proof}
Apply Theorem~\ref{thm:mellin-reconstruction} to
$H(t)=\sum_{k\ge0}(-1)^km_kt^k$ and use inverse Stieltjes duality.
\end{proof}

\begin{example}[Bounded coefficients do not suffice]
\label{ex:wrong-support}
Choose $c\in\overline{\Fq}\setminus\Fq$ and consider
\[
H_c(t)=\frac1{1+ct}
=
\sum_{k\ge0}(-1)^kc^kt^k.
\]
Its coefficients have absolute value one, but it is not
the Mellin slice of a distribution on $\cO$.

Its prospective Carlitz moments are $G_n(c)$.
Every $a\in\cA_j$ has residue in $\Fq$, so $|c-a|=1$.
Hence
\[
|e_j(c)|=1,
\qquad
|G_{q^j}(c)|
=
|f_j(c)|
=
q^{(q^j-1)/(q-1)}.
\]
Therefore
\[
\lim_{j\to\infty}|G_{q^j}(c)|^{1/q^j}
=
q^{1/(q-1)}>1.
\]
The obstruction is a support condition, not failure of
convergence on the open unit disc.
\end{example}

\begin{remark}[Mahler coordinates and the support condition]
\label{rem:padic-reconstruction}
The characteristic-zero resolvent expansion also identifies
ordinary moments; see \cite[Proposition~2.23]{HuKimInverse}.
For distributions on $\Zp$, the corresponding coordinates are
the Mahler polynomials
\[
M_n(X)=\binom Xn=\sum_{k=0}^nh_{n,k}X^k.
\]
The Amice characterization of locally analytic Mahler expansions
gives the analogous dual criterion:
$(m_k)$ is the moment sequence of a locally analytic
$\Cp$-valued distribution on $\Zp$ if and only if
\[
\limsup_{n\to\infty}
\left|\sum_{k=0}^nh_{n,k}m_k\right|_p^{1/n}\le1.
\]
Bounded measures correspond to bounded Mahler moments.
This comparison comes from interpolation theory
\cite{Amice,Schikhof}, rather than a range theorem in
\cite{HuKimInverse}.

The support obstruction also persists.
Choose $c\in\Cp$ with residue outside $\Fp$ and $|c|_p=1$.
Then $|c^k|_p=1$, but
\[
\left|\binom cn\right|_p=|n!|_p^{-1},
\qquad
\lim_{n\to\infty}\left|\binom cn\right|_p^{1/n}
=
p^{1/(p-1)}>1.
\]
Thus bounded ordinary moments alone do not impose the
required support condition in either characteristic.
\end{remark}

\begin{proposition}[Entire Mellin slices]
\label{prop:entire-slice}
Let $H(t)=\sum_{k\ge0}a_kt^k$ be entire.
Then
\[
\mu_H(f)=\sum_{k\ge0}(-1)^ka_kD^{(k)}f(0)
\]
defines an element of $\cL^*(\cO)$ and
\[
\cM_{\mu_H}(-1,t)=H(t),\qquad |t|<1.
\]
The distribution vanishes on every function that is zero
on a neighborhood of $0$.
\end{proposition}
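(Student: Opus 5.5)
The plan is to prove continuity of $\mu_H$ directly on each Banach space $B_n$ (equivalently on each $\mathscr E_h$), and then identify its moments. Write $f\in B_n$. Its restriction to the component disc $D(0,r_n)$ is a Tate series $\sum_k f_{0,k}X^k$ with $|f_{0,k}|r_n^k\le\norm{f}_n$, and $D^{(k)}f(0)=f_{0,k}$. Hence
\[
|(-1)^ka_kD^{(k)}f(0)|\le |a_k|r_n^{-k}\norm{f}_n .
\]
Because $H$ is entire, $|a_k|R^k\to0$ for every $R>0$. Taking $R=r_n^{-1}$ gives $|a_k|r_n^{-k}\to0$, so the series defining $\mu_H(f)$ converges. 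It also gives $\norm{\mu_H}_n\le\sup_k|a_k|r_n^{-k}<\infty$.

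The functionals on the various $B_n$ are compatible, since they depend only on the germ of $f$ at $0$. Restriction $B_n\to B_{n+1}$ preserves Taylor coefficients at the center $0\in\cA_n\cap\cA_{n+1}$. So $\mu_H$ is a continuous linear functional on the inductive limit, that is, $\mu_H\in\cL^*(\cO)$.

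Next I will compute the ordinary moments. For $f=X^m$ we have $D^{(k)}X^m(0)=\delta_{km}$, so $m_m(\mu_H)=(-1)^ma_m$. By the expansion \eqref{eq:mellin-expansion} of Theorem~\ref{thm:mellin-calculus} at $y=-1$, where $\rbinom{-1}{k}=(-1)^k$, we get
\[
\cM_{\mu_H}(-1,t)=\sum_k(-1)^k m_k(\mu_H)t^k=\sum_k a_kt^k=H(t)
\]
for $|t|<1$. Alternatively one can apply $\mu_H$ directly to the kernel $(1+tx)^{-1}$, whose Hasse derivatives at $0$ are $(-t)^k$. The termwise application is justified by the $B_1$-convergence already used in that theorem.

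Finally, if $f$ vanishes on a neighborhood of $0$, then for $n$ large that neighborhood contains the disc $D(0,r_n)$. On that disc every Taylor coefficient $f_{0,k}$ is zero, so $\mu_H(f)=0$.

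The only point needing care is the first estimate. Continuity must hold on every $B_n$, and the radii $r_n\to0$, so the weights $r_n^{-k}$ grow without bound. This is exactly where entirety of $H$, rather than mere convergence on the unit disc, is used. With that observed, the rest is routine.
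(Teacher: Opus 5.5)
Your proof is correct and follows essentially the same route as the paper. Entirety gives $|a_k|r_n^{-k}\to0$, and together with the bound $|D^{(k)}f(0)|\le\norm{f}_n r_n^{-k}$ this yields continuity on each $B_n$; compatibility holds because only the germ at $0$ matters, and the moments $(-1)^ka_k$ combined with the expansion at $y=-1$ give the Mellin identity, while your support argument spells out what the paper leaves as immediate.
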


\begin{proof}
For $f\in B_n$,
\[
|D^{(k)}f(0)|\le\norm{f}_nr_n^{-k}.
\]
Entire convergence gives $|a_k|r_n^{-k}\to0$, so
\[
|\mu_H(f)|
\le
\left(\sup_k|a_k|r_n^{-k}\right)\norm{f}_n.
\]
The functionals are compatible between levels.
Since $D^{(k)}X^j|_{X=0}$ is one for $k=j$ and zero otherwise,
the moments are $(-1)^ja_j$.
The Mellin identity and the support assertion follow.
\end{proof}

This is an analytic representation statement.
It does not identify $\mu_H$ with the spectral distribution
of a previously specified operator and scalarization.

For comparison, a bounded measure has Goss's divided-power transform
\[
\mathscr P_\mu(U)=\sum_{n\ge0}\mu(G_n)U^{[n]},
\qquad
U^{[i]}U^{[j]}=\binom{i+j}{i}U^{[i+j]}.
\]
The symbols are formal and do not denote division by $n!$.
This transform converts additive convolution into
divided-power multiplication \cite{GossFormal,Kochubei}.
Our reconstruction theorem uses the same distribution
coordinates but a different kernel.

%======================================================================
\section{Digit measures and analytic fixed points}
\label{sec:digits}
%======================================================================

\subsection{The measure and its spectral realization}

Choose
\[
\beta_a\in\Cinf,\qquad a\in\Fq,\qquad
\sum_a\beta_a=1,\qquad |\beta_a|\le1.
\]
For $b=\sum_{j=0}^{N-1}a_j\pi^j\in\cA_N$, set
\[
w_{b,N}=\prod_{j=0}^{N-1}\beta_{a_j},
\qquad w_{0,0}=1.
\]
The same weight vector is used at every digit position;
we call the resulting measures stationary digit measures.

\begin{proposition}\label{prop:digit-measure}
There is a unique bounded measure $\nu_\beta$ such that
\[
\nu_\beta(b+\pi^N\cO)=w_{b,N}.
\]
Its total mass and norm are one.
It satisfies
\begin{equation}\label{eq:self-similarity}
\nu_\beta(f)
=
\sum_{a\in\Fq}\beta_a
\nu_\beta\bigl(u\mapsto f(a+\pi u)\bigr).
\end{equation}
If all $\beta_a$ are nonzero, its support is $\cO$.

Let $Q$ be multiplication by $x$ on $C(\cO,\Cinf)$.
Then $\Spec(Q)=\cO$, and
\[
\Lambda_\beta(B)=\nu_\beta(B\mathbf1)
\]
makes $(Q,\Lambda_\beta)$ an admissible pair with
spectral distribution $\nu_\beta$.
\end{proposition}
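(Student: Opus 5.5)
We construct $\nu_\beta$ on locally constant functions and extend by density. Every locally constant $f$ on $\cO$ factors through $\cO/\pi^N\cO$ for some $N$, so we set
\[
\nu_\beta(f)=\sum_{b\in\cA_N}w_{b,N}f(b).
\]
This is independent of $N$: refining from $N$ to $N+1$ replaces $w_{b,N}$ by
\[
\sum_{a\in\Fq}w_{b+a\pi^N,N+1}=w_{b,N}\sum_a\beta_a=w_{b,N}.
\]
Since $|w_{b,N}|\le1$, we get $|\nu_\beta(f)|\le\sup|f|$. Locally constant functions are dense in $C(\cO,\Cinf)$ (they contain the span of the $G_n$ restricted to finite levels, or use uniform continuity), so $\nu_\beta$ extends uniquely to a bounded measure of norm at most one. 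Its total mass is $\nu_\beta(1)=w_{0,0}=1$, so the norm is exactly one. Uniqueness holds because the values on the balls $b+\pi^N\cO$ determine $\nu_\beta$ on the dense subspace.

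For the self-similarity \eqref{eq:self-similarity}, we verify it on indicators and extend by continuity. Take $f=\mathbf 1_{b+\pi^N\cO}$ with $N\ge1$ and $b=a_0+\pi b'$, where $b'\in\cA_{N-1}$. Then $u\mapsto f(a+\pi u)$ is $\mathbf 1_{b'+\pi^{N-1}\cO}$ if $a=a_0$ and $0$ otherwise. The right side is therefore $\beta_{a_0}w_{b',N-1}=w_{b,N}$. The case $N=0$ is $\sum_a\beta_a=1$. Both sides are continuous in $f$ for the sup norm, since $f\mapsto f(a+\pi\,\cdot)$ is norm-nonincreasing. For the support claim: if all $\beta_a\ne0$, every ball has nonzero mass, so no open set is null and the support is $\cO$.

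For the spectral part, $zI-Q$ is multiplication by $z-x$. When $z\notin\cO$, the function $z-x$ is continuous and nonvanishing on the compact set $\cO$, hence $|z-x|$ is bounded below and its inverse is multiplication by $(z-x)^{-1}\in C(\cO,\Cinf)$. When $z\in\cO$, the operator $zI-Q$ is not invertible: evaluation at $z$ kills its range, since every $(z-x)g$ vanishes at $x=z$. Thus $\Spec(Q)=\cO$. The map $\Lambda_\beta$ is continuous, with norm at most one, and
\[
F(z)=\Lambda_\beta((zI-Q)^{-1})=\nu_\beta\bigl((z-x)^{-1}\bigr)=S_{\nu_\beta}(z),
\]
where $\nu_\beta$ is regarded in $\cL^*(\cO)$ by restriction. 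The main point to check is that this last identification is legitimate. The function $(z-x)^{-1}$, for $z\in\Omega_{n}$ (more precisely, outside all level-$(n+1)$ closed discs), lies in $B_{n+1}$, and its sup-norm restriction to $\cO$ is the continuous function used above. The bounded measure evaluated on the continuous function therefore agrees with its locally analytic restriction evaluated on the germ. Proposition~\ref{prop:duality} then gives $S_{\nu_\beta}\in\cH_0(\cO)$, so the pair is admissible. Finally $V(F)=VS_{\nu_\beta}=\nu_\beta$, which identifies $\nu_\beta$ as the spectral distribution.
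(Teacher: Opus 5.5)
Your proposal is correct and follows the paper's proof essentially step by step. The steps are: consistency of the weights on locally constant functions, extension by uniform density, self-similarity by grouping the first digit, the multiplication-operator inverse for $z\notin\cO$ together with the evaluation obstruction for $z\in\cO$, and admissibility via Proposition~\ref{prop:duality}. Your check that $(z-x)^{-1}\in B_{n+1}$, so that the bounded measure and its locally analytic restriction give the same value $S_{\nu_\beta}(z)$, makes explicit a step the paper leaves implicit. The only slip is the parenthetical about the $G_n$: they are polynomials, not locally constant functions. The uniform-continuity justification you also give is enough on its own.
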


\begin{proof}
The weights satisfy
\[
\sum_{\substack{c\in\cA_{N+1}\\c\equiv b\pmod{\pi^N}}}
w_{c,N+1}
=
w_{b,N}\sum_a\beta_a
=
w_{b,N}.
\]
They define a functional on locally constant functions,
bounded by the supremum norm.
Uniform density gives its unique extension to continuous
functions.
Its value on one is one, so its norm is one.
Grouping the first digit proves \eqref{eq:self-similarity}.

If all weights are nonzero, every residue class has nonzero
mass, and every nonempty open set contains such a class.
This gives full support.

For $z\notin\cO$, multiplication by $(z-x)^{-1}$ is the
bounded inverse of $zI-Q$.
For $z\in\cO$, every function in its range vanishes at $z$,
so $zI-Q$ is not surjective.
Hence $\Spec(Q)=\cO$.
Finally,
\[
\Lambda_\beta((zI-Q)^{-1})
=
\nu_\beta((z-x)^{-1})
=
S_{\nu_\beta}(z).
\]
Proposition~\ref{prop:duality} proves admissibility.
\end{proof}

The weights are field-valued, not real probabilities.
Equal weights of total mass one are impossible in
characteristic $p$.
Nonzero bounded weights exist: choose
\[
\gamma\in\overline{\Fq}\setminus\{0,-1\},
\qquad
\beta_0=1+\gamma,\qquad
\beta_a=\gamma\quad(a\ne0).
\]
Their sum is $1+q\gamma=1$.
This gives a full-support scalar spectral distribution.

\begin{remark}[Khrennikov's position operators and the choice of measure]
\label{rem:padic-position}
The position-operator realization is the counterpart of
\cite{Khrennikov1991} and \cite[Section~5]{HuKimInverse}.
In both characteristics, multiplication by the coordinate
on $C(\Sigma)$ has spectrum $\Sigma$, and a chosen bounded
measure $\eta$ gives the scalarization
$\Lambda_\eta(B)=\eta(B\mathbf1)$.
The operator alone does not specify a canonical scalar
spectral distribution.

Boundedness must be distinguished from Haar normalization.
On $\Zp$, the $\Cp$-valued Haar distribution assigns mass
$p^{-N}$ to a residue class modulo $p^N$ and is unbounded.
Its Volkenborn extension to locally analytic functions is not
translation invariant \cite{Schikhof}.
A normalized translation-invariant functional on all locally
analytic functions would already contradict the identity
$(x+1)-x=1$.

In characteristic $p$, a normalized translation-invariant
field-valued distribution on locally constant functions
would give
\[
1=q\,\eta(\pi\cO)=0.
\]
The digit measures avoid this obstruction through unequal
weights. They are not normalized Haar measures.
\end{remark}

\subsection{A contraction principle}

For fixed $y\in\Zp$, define
\[
(\mathcal T_{\beta,y}F)(t)
=
\sum_{a\in\Fq}\beta_a(1+at)^y
F\left(\frac{\pi t}{1+at}\right).
\]

\begin{theorem}[Digit fixed-point theorem]
\label{thm:digit-fixed-point}
For every $y\in\Zp$, the function $\cM_{\nu_\beta}(y,t)$
is the unique analytic function on $|t|<1$ satisfying
\[
F=\mathcal T_{\beta,y}F,\qquad F(0)=1.
\]
On $\Cinf\langle t/\rho\rangle$, $0<\rho<1$,
the operator preserves the condition $F(0)=1$ and satisfies
\[
\norm{\mathcal T_{\beta,y}F-\mathcal T_{\beta,y}G}_\rho
\le
|\pi|\norm{F-G}_\rho
\]
whenever $F(0)=G(0)$.

If $F_{N,y}=\mathcal T_{\beta,y}^{\,N}1$, then
\begin{equation}\label{eq:digit-finite-sum}
F_{N,y}(t)
=
\sum_{b\in\cA_N}w_{b,N}(1+bt)^y,
\end{equation}
and
\begin{equation}\label{eq:digit-error}
\sup_{y\in\Zp}
\norm{F_{N,y}-\cM_{\nu_\beta}(y,\cdot)}_\rho
\le
\rho|\pi|^N.
\end{equation}
\end{theorem}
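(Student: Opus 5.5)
The plan has four steps. First verify that $\cM_{\nu_\beta}(y,\cdot)$ is a fixed point. Then prove the contraction estimate on each Tate algebra. Uniqueness and the error bound then follow from the contraction.

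\emph{Step 1: the Mellin transform is a fixed point.} Apply the self-similarity \eqref{eq:self-similarity} to the kernel $f(x)=(1+tx)^y$, for fixed $|t|\le\rho<1$. For $x=a+\pi u$ we have
\[
1+tx=(1+at)\Bigl(1+\frac{\pi t}{1+at}\,u\Bigr).
\]
Here $|at|<1$, so $1+at$ is a principal unit. Also $|\pi t/(1+at)|\le|\pi|\rho<1$, so the second factor is a principal unit when $u\in\cO$. The multiplicativity $(uv)^y=u^yv^y$ then gives
\[
(1+t(a+\pi u))^y=(1+at)^y\bigl(1+s_a u\bigr)^y,\qquad s_a=\frac{\pi t}{1+at}.
\]
Integrating in $u$ against $\nu_\beta$ yields $\cM_{\nu_\beta}(y,t)=\mathcal T_{\beta,y}\cM_{\nu_\beta}(y,\cdot)(t)$. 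The normalization $\cM_{\nu_\beta}(y,0)=\nu_\beta(1)=1$ comes from Proposition~\ref{prop:digit-measure}, and analyticity comes from Theorem~\ref{thm:mellin-calculus}. Since $\nu_\beta$ is a bounded measure, uniform convergence of the kernels suffices to justify the manipulations.

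\emph{Step 2: the contraction estimate on $\Cinf\langle t/\rho\rangle$.} Write $\phi_a(t)=\pi t/(1+at)$. The series $1/(1+at)=\sum(-a)^kt^k$ has Gauss norm $1$ on $|t|\le\rho$. Hence $\phi_a=\pi t\cdot(\text{unit of norm }1)$, so $\norm{\phi_a}_\rho=|\pi|\rho$.

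Suppose $F(0)=G(0)$, and write $F-G=\sum_{k\ge1}c_kt^k$. Then
\[
(F-G)\circ\phi_a=\sum_{k\ge1}c_k\phi_a^k .
\]
By multiplicativity of the Gauss norm, each term satisfies $\norm{c_k\phi_a^k}_\rho=|c_k|(|\pi|\rho)^k\le|\pi|\cdot|c_k|\rho^k$, using $k\ge1$ and $|\pi|\le1$. Therefore $\norm{(F-G)\circ\phi_a}_\rho\le|\pi|\norm{F-G}_\rho$. The factor $(1+at)^y$ has Gauss norm at most $1$, since its coefficients are reduced binomial coefficients times powers of $a\in\Fq$. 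The weights satisfy $|\beta_a|\le1$, and the ultrametric inequality then gives the stated estimate.

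For the normalization, note $\phi_a(0)=0$ and $(1+a\cdot0)^y=1$. Hence $(\mathcal T F)(0)=F(0)\sum_a\beta_a=F(0)$, so the condition $F(0)=1$ is preserved. The same computation shows that $\mathcal T$ maps $\Cinf\langle t/\rho\rangle$ into itself, because the composition converges in that algebra.

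\emph{Step 3: uniqueness.} Let $F$ be analytic on $|t|<1$ with $F=\mathcal T F$ and $F(0)=1$. Then $F$ and $\cM_{\nu_\beta}(y,\cdot)$ both lie in every $\Cinf\langle t/\rho\rangle$ with $\rho<1$, and they have the same constant term. The contraction gives $\norm{F-\cM}_\rho\le|\pi|\norm{F-\cM}_\rho$, so $F=\cM$ on each disc. Hence $F=\cM$ on $|t|<1$.

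\emph{Step 4: the finite digit sums and the error bound.} Formula \eqref{eq:digit-finite-sum} follows by induction on $N$. Assume it for $F_{N,y}$ and apply $\mathcal T$. Using the same factorization as in Step 1 in reverse, we get
\[
(1+at)^y\bigl(1+b\,\phi_a(t)\bigr)^y=(1+(a+\pi b)t)^y .
\]
The weight factorizes as $\beta_a w_{b,N}=w_{a+\pi b,N+1}$, and every $c\in\cA_{N+1}$ is written uniquely as $a+\pi b$ with $a\in\Fq$, $b\in\cA_N$. This gives the formula for $N+1$.

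For \eqref{eq:digit-error}, the base case is $F_{0,y}=1$. Its difference with $\cM$ is $\sum_{k\ge1}\rbinom yk m_kt^k$. By \eqref{eq:bounded-monomial} and $\norm{\nu_\beta}_1\le1$, this difference has Gauss norm at most $\rho$. Iterating the contraction $N$ times, with both arguments keeping constant term $1$ at every stage, gives the bound $\rho|\pi|^N$. This bound is uniform in $y$, because each estimate used is.

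The main obstacle is Step 2: one must be careful that composition with $\phi_a$ is well defined and norm-controlled in the Tate algebra. This rests on $\norm{1/(1+at)}_\rho=1$ and on multiplicativity of the Gauss norm, which together give $\norm{\phi_a^k}_\rho=(|\pi|\rho)^k$ exactly.
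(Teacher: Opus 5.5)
Your proposal is correct and follows essentially the same route as the paper: the factorization $1+t(a+\pi u)=(1+at)\bigl(1+\frac{\pi t}{1+at}u\bigr)$ with self-similarity for the fixed point, the bound $\norm{H(\pi t/(1+at))}_\rho\le|\pi|\norm{H}_\rho$ for $H(0)=0$ together with $|\beta_a|\le1$ for the contraction, induction via the same factorization for the finite digit sums, and $\norm{\cM_{\nu_\beta}(y,\cdot)-1}_\rho\le\rho$ iterated $N$ times for the uniform error bound. Your uniqueness argument compares two fixed points directly instead of invoking completeness of the affine subspace. This is a harmless simplification, since the Mellin transform already supplies existence.
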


\begin{proof}
On the radius-$\rho$ disc, $1+at$ and its inverse have norm one,
and
\[
\norm{\frac{\pi t}{1+at}}_\rho\le|\pi|\rho,
\qquad
\norm{(1+at)^y}_\rho=1.
\]
If $H(t)=\sum_{k\ge1}h_kt^k$, then
\[
\norm{H\left(\frac{\pi t}{1+at}\right)}_\rho
\le
\sup_{k\ge1}|h_k|(|\pi|\rho)^k
\le
|\pi|\norm{H}_\rho.
\]
The ultrametric inequality and $|\beta_a|\le1$ give the
contraction estimate.
The constant term is preserved because $\sum_a\beta_a=1$.

The factorization
\[
1+t(a+\pi u)
=
(1+at)\left(1+\frac{\pi t}{1+at}u\right)
\]
and \eqref{eq:self-similarity} show that the Mellin transform
is a normalized fixed point.
The affine subspace $F(0)=1$ is complete, so contraction
gives uniqueness on every smaller closed disc.

Formula \eqref{eq:digit-finite-sum} follows inductively from
\[
(1+at)^y
\left(1+\frac{\pi bt}{1+at}\right)^y
=
(1+(a+\pi b)t)^y.
\]
Since $\nu_\beta$ has norm and mass one,
\[
\norm{\cM_{\nu_\beta}(y,\cdot)-1}_\rho\le\rho.
\]
Applying the contraction $N$ times gives
\eqref{eq:digit-error}, uniformly in $y$.
\end{proof}

\subsection{Recovery of the digit weights}

Write
\[
m_k=\nu_\beta(x^k),
\qquad
c_j=\sum_{a\in\Fq}\beta_aa^j,
\qquad m_0=c_0=1,
\]
with $a^0=1$.

\begin{corollary}[Finite-moment identification]
\label{cor:digit-identification}
For $k\ge1$,
\begin{equation}\label{eq:digit-moment}
(1-\pi^k)m_k
=
\sum_{j=0}^{k-1}\binom kj\pi^jc_{k-j}m_j.
\end{equation}
Within the class of normalized stationary digit measures
defined above, the $q-1$ moments
\[
m_1,\ldots,m_{q-1}
\]
determine all weights $\beta_a$ and hence the measure.
Equivalently, the coefficients of $t,\ldots,t^{q-1}$ in
$\cM_{\nu_\beta}(-1,t)$ determine the measure within this class;
the constant coefficient is already one.
\end{corollary}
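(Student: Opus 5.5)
The plan is to apply the self-similarity identity \eqref{eq:self-similarity} of Proposition~\ref{prop:digit-measure} to the monomial $f(x)=x^k$, and then to invert a Vandermonde system over $\Fq$.

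\textbf{The recursion.} With $f(x)=x^k$, the identity \eqref{eq:self-similarity} reads
\[
m_k=\sum_{a\in\Fq}\beta_a\,\nu_\beta\bigl((a+\pi u)^k\bigr)
=\sum_{a}\beta_a\sum_{j=0}^k\binom kj a^{k-j}\pi^j m_j .
\]
Exchanging the two finite sums gives
\[
m_k=\sum_{j=0}^k\binom kj\pi^j c_{k-j}m_j .
\]
Here $a^0=1$ is used for $a=0$, so $c_0=\sum_a\beta_a=1$. The term $j=k$ equals $\pi^k m_k$. Moving it to the left-hand side yields \eqref{eq:digit-moment}. Since $|\pi^k|<1$, the factor $1-\pi^k$ is nonzero. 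Hence the recursion determines every $m_k$ from $c_1,\dots,c_k$, and conversely, inductively, $c_k$ from $m_1,\dots,m_k$. Indeed, the $j=0$ term is $c_k m_0=c_k$, so
\[
c_k=(1-\pi^k)m_k-\sum_{j=1}^{k-1}\binom kj\pi^j c_{k-j}m_j .
\]
Therefore $m_1,\dots,m_{q-1}$ determine $c_1,\dots,c_{q-1}$, and we already have $c_0=1$.

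\textbf{Recovering the weights.} Consider the $q$ linear equations $c_j=\sum_a\beta_a a^j$ for $0\le j\le q-1$. Their coefficient matrix is the Vandermonde matrix $(a^j)_{a\in\Fq,\,0\le j\le q-1}$. The nodes are the $q$ distinct elements of $\Fq$, so the matrix is invertible over $\Fq\subset\Cinf$. Thus $(\beta_a)$ is uniquely determined by $c_0,\dots,c_{q-1}$. An explicit inversion is available but not needed: for $a\neq0$ one can use the character sums $\sum_{x\in\Fq^\times}x^j$, which vanish unless $(q-1)\mid j$, together with $\beta_0=1-\sum_{a\ne0}\beta_a$.

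By Proposition~\ref{prop:digit-measure}, the measure is determined by the $\beta_a$ through the weights $w_{b,N}$. This completes the identification.

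\textbf{The Mellin-coefficient restatement.} By \eqref{eq:mellin-expansion} with $y=-1$, the reduced binomial coefficients are $(-1)^k$, so the coefficient of $t^k$ in $\cM_{\nu_\beta}(-1,t)$ is $(-1)^k m_k$. This gives the equivalent statement. The constant coefficient is $m_0=\nu_\beta(1)=1$.

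\textbf{Main obstacle.} The only delicate point is the Vandermonde step. One must check that exactly $q$ power sums $c_0,\dots,c_{q-1}$ are required, and not more. The reason is that the $q$ evaluation functionals on $\Fq$ are separated by the polynomials of degree below $q$. Beyond degree $q-1$, the relation $a^q=a$ would make the later $c_j$ redundant.
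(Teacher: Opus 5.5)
Your proposal is correct and matches the paper's proof step for step. You apply self-similarity to $x^k$ to get the full recursion, move the $j=k$ term to the left, and use the $j=0$ term $c_km_0=c_k$ to solve recursively for $c_1,\dots,c_{q-1}$; inverting the Vandermonde matrix on $\Fq$ then recovers the $\beta_a$, and the Mellin restatement via $\rbinom{-1}{k}=(-1)^k$ is the paper's final step as well.
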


\begin{proof}
Self-similarity gives
\[
m_k=\sum_{j=0}^k\binom kj\pi^jc_{k-j}m_j.
\]
The last term is $\pi^km_k$, proving \eqref{eq:digit-moment}.
Since $m_0=1$,
\[
c_k
=
(1-\pi^k)m_k
-
\sum_{j=1}^{k-1}\binom kj\pi^jc_{k-j}m_j
\]
recursively determines $c_1,\ldots,c_{q-1}$.
The equations
\[
c_k=\sum_{a\in\Fq}\beta_aa^k,\qquad 0\le k<q,
\]
have an invertible Vandermonde matrix.
The Mellin coefficients at $y=-1$ are $(-1)^km_k$.
\end{proof}

\begin{remark}[The $p$-adic fermionic measure as a digit model]
\label{rem:padic-digit-fixed-point}
The contraction argument also applies in characteristic zero.
For odd $p$, use digits $a=0,\ldots,p-1$, weights
$\beta_a=(-1)^a$, and replace $\pi$ by $p$.
Since
\[
\sum_{a=0}^{p-1}(-1)^a=1,
\qquad
(-1)^{\sum_ja_jp^j}=\prod_j(-1)^{a_j},
\]
the resulting stationary digit measure is the fermionic
measure $\mu_{-1}$ of \cite[Definition~2.3]{HuKimInverse}.
Its transform satisfies
\[
\cM_{\mu_{-1},p}(y,t)
=
\sum_{a=0}^{p-1}(-1)^a(1+at)^y
\cM_{\mu_{-1},p}
\left(y,\frac{pt}{1+at}\right),
\]
and is the unique normalized analytic fixed point, with
contraction factor $p^{-1}$.
For
\[
R_{N,y}(t)=\sum_{b=0}^{p^N-1}(-1)^b(1+bt)^y,
\]
the same proof gives
\[
\sup_{y\in\Zp}
\norm{R_{N,y}-\cM_{\mu_{-1},p}(y,\cdot)}_\rho
\le\rho p^{-N}.
\]
Related bounded-measure resolvent approximations appear in
\cite[Section~3.2]{HuKimInverse}.

By contrast, the Haar weights $1/p$ have absolute value $p$.
The analogous estimate gives only
\[
|p|_p\max_a|1/p|_p=1,
\]
not a strict contraction.
The bounded-digit argument therefore does not replace the
Volkenborn estimates in \cite[Theorem~3.21]{HuKimInverse}.
Finite-moment recovery of stationary digit weights also
carries over, using the Vandermonde matrix on $0,\ldots,p-1$.
\end{remark}

%======================================================================
\section{Frobenius-normalized spectral determinants}
\label{sec:fredholm}
%======================================================================

A scalar distribution obtained from traces need not retain
all determinant data in characteristic $p$.
Over a characteristic-zero field, a finite-dimensional
operator satisfies the formal identity
\[
\det(I-ZT)
=
\exp\left(-\sum_{m\ge1}\frac{\Tr(T^m)}mZ^m\right).
\]
Thus power traces determine the determinant.
Equivalently, the logarithmic derivative and $F(0)=1$
determine a formal series uniquely.

In characteristic $p$, differentiation annihilates every
series in $Z^p$.
Even a small-coefficient normalization does not remove this
ambiguity:
for $0<|c|<1$, the distinct series
\[
1,\qquad 1-c^pZ^p
\]
have the same logarithmic derivative.
For a fixed finite dimension less than $p$, however, Newton's
recursion still determines the characteristic polynomial,
since the required integers remain invertible.

The characteristic-zero comparison concerns ordinary traces,
not arbitrary scalarizations.
Even there, $\Lambda(T^m)$ need not determine an ordinary
operator determinant.
The normalized log-gamma functionals of
\cite[Section~4]{HuKimInverse} are defined by exponent
differentiation of scalar spectral zeta functions.
Their moment expansions do not assert that every scalar
resolvent determines an ordinary Fredholm determinant.

We instead prescribe coefficient-Frobenius data independently
of scalar traces.
The small-coefficient condition gives uniqueness only after
those data have been supplied.
The formal Frobenius theory does not require eigenvalues to
belong to $\cO$.
The support hypothesis is imposed separately when connecting
determinants with the preceding distribution theory.

\subsection{What scalar traces retain}

\begin{proposition}[Trace ambiguity in characteristic $p$]
\label{prop:trace-ambiguity}
For a finite-dimensional operator $T$,
\[
\det(I-ZT)=\sum_r(-1)^r\Tr(\wedge^rT)Z^r,
\]
and
\[
\frac{\det(I-ZT)'}{\det(I-ZT)}
=
-\sum_{m\ge1}\Tr(T^m)Z^{m-1}.
\]
Scalar power traces do not determine the determinant in general.
More precisely, for $F,G\in1+Z\Cinf[[Z]]$,
\[
F'/F=G'/G
\quad\Longleftrightarrow\quad
F/G\in1+Z^p\Cinf[[Z^p]].
\]
\end{proposition}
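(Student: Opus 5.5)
The plan has three parts, taken in the order of the statement: the two trace formulas, the characterization of equal logarithmic derivatives, and an explicit counterexample.

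\textbf{The two formulas.} Both should be identities of polynomials in the matrix entries over $\Z$, so I would reduce to a triangular form.

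\begin{enumerate}[(i)]
\item Pass to an algebraic closure and triangularize $T$, with eigenvalues $\alpha_1,\dots,\alpha_d$ counted with multiplicity. Then $\det(I-ZT)=\prod_i(1-\alpha_iZ)$.
\item Expanding this product gives the elementary symmetric functions. These equal $\Tr(\wedge^rT)$, since $\wedge^rT$ is triangular with diagonal entries $\alpha_{i_1}\cdots\alpha_{i_r}$. This is the first formula.
\item For the logarithmic derivative, compute term by term:
\[
\frac{\det(I-ZT)'}{\det(I-ZT)}=\sum_i\frac{-\alpha_i}{1-\alpha_iZ}=-\sum_{m\ge1}\Bigl(\sum_i\alpha_i^m\Bigr)Z^{m-1}.
\]
Since $\sum_i\alpha_i^m=\Tr(T^m)$, this is the second formula, as an identity in $\Cinf[[Z]]$.
\end{enumerate}

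\textbf{The equivalence.} Put $H=F/G\in1+Z\Cinf[[Z]]$; this is invertible because $G(0)=1$. The quotient rule gives
\[
\frac{H'}{H}=\frac{F'}{F}-\frac{G'}{G},
\]
so $F'/F=G'/G$ holds if and only if $H'=0$.

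Write $H=\sum_k h_kZ^k$. Then $H'=\sum_k kh_kZ^{k-1}$, and in characteristic $p$ the integer $k$ is invertible exactly when $p\nmid k$. Hence $H'=0$ if and only if $h_k=0$ for all $p\nmid k$, that is, $H\in\Cinf[[Z^p]]$. Since $H(0)=1$, this is the same as $H\in1+Z^p\Cinf[[Z^p]]$, which proves both directions.

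\textbf{Non-determination.} It remains to exhibit two operators with the same power traces but different determinants. Take $T=I_p$, the identity on a $p$-dimensional space, and $T'=0$ on the same space.

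\begin{enumerate}[(i)]
\item For every $m\ge1$, $\Tr(T^m)=p=0=\Tr(T'^m)$.
\item On the other hand, $\det(I-ZT)=(1-Z)^p=1-Z^p$, while $\det(I-ZT')=1$.
\end{enumerate}

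This is consistent with the equivalence: the quotient $1-Z^p$ lies in $1+Z^p\Cinf[[Z^p]]$, as it must.

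The only point that needs care is the meaning of $\Tr(\wedge^rT)$ in arbitrary characteristic. Defining it through the triangular form, or checking it as a universal identity over $\Z$ and specializing, avoids any issue.
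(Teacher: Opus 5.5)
Your proof is correct and matches the paper's: the reduction of $F'/F=G'/G$ to $(F/G)'=0$ and the counterexample $I_p$ versus $0_p$ are exactly what the paper does. The only difference is in the two trace formulas, which you obtain by triangularizing over the algebraically closed field $\Cinf$, whereas the paper uses principal minors and Jacobi's adjugate formula; either route is standard and sufficient.
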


\begin{proof}
The exterior-power formula expresses the coefficients as sums
of principal minors.
Jacobi's adjugate formula and the formal inverse of $I-ZT$
give the logarithmic derivative.

Take $T_1=I_p$ and $T_2=0_p$.
All their power traces, including degree zero, are zero, but
\[
\det(I-ZT_1)=1-Z^p,\qquad
\det(I-ZT_2)=1.
\]
Their trace-scalarized analytic spectral distributions also
coincide, since
\[
\Tr(f(T_1))=pf(1)=0,\qquad
\Tr(f(T_2))=pf(0)=0.
\]
Finally, with $R=F/G$, equality of logarithmic derivatives
is equivalent to $R'=0$, which is equivalent to all
nonconstant terms having degree divisible by $p$.
\end{proof}

\subsection{Normalized coefficient-Frobenius equations}
\label{sec:frobenius-resolvent}

Let
\[
\mathcal U=
\left\{
1+\sum_{r\ge1}a_rZ^r:\ a_r\in\fm
\right\}.
\]
This is an abelian group under multiplication.
The condition is coefficientwise; no common bound less than
one is assumed.
Every coefficient of a product or inverse involves only
finitely many additions and multiplications, so these operations
preserve the condition.

Define
\begin{equation}\label{eq:coefficient-frobenius}
\sigma\left(\sum_{r\ge0}a_rZ^r\right)
=
\sum_{r\ge0}a_r^qZ^r.
\end{equation}
Since $\Cinf$ is perfect, $\sigma$ is a ring automorphism and
restricts to an automorphism of $\mathcal U$.
It fixes $Z$ and differs from ordinary exponentiation:
\[
F(Z)^q=\sigma(F)(Z^q).
\]
For Gauss norms,
\begin{equation}\label{eq:frobenius-gauss}
\norm{\sigma(F)}_\rho
=
\norm{F}_{\rho^{1/q}}^q
\end{equation}
whenever the right-hand side is defined.

\begin{definition}[Frobenius defect]
\label{def:frobenius-defect}
For $F\in\mathcal U$, put
\[
\Def(F)=F/\sigma(F).
\]
For prescribed $\Phi\in\mathcal U$, a normalized solution is
an $F\in\mathcal U$ satisfying
\begin{equation}\label{eq:frobenius-equation}
F=\Phi\,\sigma(F).
\end{equation}
\end{definition}

Without the small-coefficient condition, a solution can be
multiplied by a nontrivial series fixed by $\sigma$.

\begin{lemma}[Small Artin--Schreier solution]
\label{lem:artin-schreier}
For $b\in\fm$, the equation $x^q-x=b$ has a unique solution
in $\fm$, namely
\[
x=-\sum_{j\ge0}b^{q^j}.
\]
It satisfies $|x|=|b|$.
\end{lemma}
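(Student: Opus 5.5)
The plan is to verify existence directly from the displayed series and then prove uniqueness and the norm identity with the ultrametric inequality. Since $|b|<1$, the terms $b^{q^j}$ have absolute value $|b|^{q^j}\to0$, so
\[
x=-\sum_{j\ge0}b^{q^j}
\]
converges in $\Cinf$. The Frobenius $u\mapsto u^q$ is additive in characteristic $p$ and continuous, so it may be applied termwise:
\[
x^q=-\sum_{j\ge1}b^{q^j}.
\]
Subtracting gives $x^q-x=b$. For the size, the leading term $b$ strictly dominates every other term when $b\ne0$, because $|b|^{q^j}<|b|$ for $j\ge1$. Hence $|x|=|b|$, and in particular $x\in\fm$. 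The case $b=0$ gives $x=0$ and is trivial.

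For uniqueness, let $x'\in\fm$ be a second solution and put $d=x'-x\in\fm$. Additivity of Frobenius gives $d^q-d=0$, so $d$ is a root of $X^q-X$, that is, $d\in\Fq$. Every nonzero element of $\Fq$ has absolute value one, while $|d|<1$. Therefore $d=0$.

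One could alternatively argue uniqueness via $|d|^q<|d|$ for $0<|d|<1$: this makes $|d^q-d|=|d|\ne0$, a contradiction. Either route is immediate.

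There is no real obstacle here. The only points needing care are the termwise application of Frobenius, which uses additivity and continuity, and the strict dominance of the first term, which requires $q\ge2$ and $|b|<1$.
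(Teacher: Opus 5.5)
Your proof is correct and follows essentially the same route as the paper: the convergent series telescopes under the additive, continuous Frobenius, and two solutions in $\fm$ differ by an element of $\Fq$, which must be $0$. The only cosmetic difference is that you obtain $|x|=|b|$ from strict dominance of the first term of the series, whereas the paper reads it off from $|x^q-x|=|x|$ for a nonzero small $x$.
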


\begin{proof}
The series converges and telescopes under Frobenius.
Two solutions differ by a root of $X^q-X$, hence by an
element of $\Fq$.
The only such element in $\fm$ is zero.
For a nonzero small solution, $|x^q-x|=|x|$.
The zero case is immediate.
\end{proof}

\begin{theorem}[Normalized Frobenius resolution]
\label{thm:frobenius-resolution}
The map $\Def:\mathcal U\to\mathcal U$ is a group automorphism.
For
\[
\Phi=1+\sum_{r\ge1}\phi_rZ^r,
\qquad
\mathfrak F(\Phi)=\Def^{-1}(\Phi)
=
1+\sum_{r\ge1}d_rZ^r,
\]
the coefficients are uniquely determined by
\begin{equation}\label{eq:resolution-recursion}
d_0=1,\qquad
d_r^q-d_r
=
-\sum_{i=1}^r\phi_id_{r-i}^q,
\end{equation}
using the solution in $\fm$ at each step.

Moreover,
\begin{equation}\label{eq:frobenius-infinite-product}
\mathfrak F(\Phi)
=
\prod_{j=0}^\infty\sigma^j(\Phi).
\end{equation}
The product converges coefficientwise in the valuation topology
and in the Gauss norm on every closed disc $|Z|\le\rho<1$.

Both $\Def$ and its inverse preserve the $Z$-adic filtration.
For $F,G\in\mathcal U$,
\begin{equation}\label{eq:defect-isometry}
\norm{\Def(F)-\Def(G)}_\rho
=
\norm{F-G}_\rho,
\qquad 0<\rho<1.
\end{equation}
\end{theorem}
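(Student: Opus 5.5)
The proof goes in four steps: the homomorphism property, the recursion (which gives bijectivity), the product formula, and the filtration and isometry statements.

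\emph{Homomorphism.} Since $\sigma$ is a ring automorphism preserving $\mathcal U$, we have $\Def(FG)=FG/(\sigma(F)\sigma(G))=\Def(F)\Def(G)$. So $\Def$ is an endomorphism of the abelian group $\mathcal U$.

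\emph{Recursion and bijectivity.} Comparing the coefficients of $Z^r$ in $F=\Phi\,\sigma(F)$ gives
\[
d_r=\sum_{i=0}^r\phi_id_{r-i}^q,
\]
which is exactly \eqref{eq:resolution-recursion}. Suppose $d_0,\dots,d_{r-1}\in\fm$ are already determined. Then the right-hand side $b_r=-\sum_{i\ge1}\phi_id_{r-i}^q$ lies in $\fm$, because $\phi_i\in\fm$ and $|d_{r-i}|\le1$. Lemma~\ref{lem:artin-schreier} then gives a unique $d_r\in\fm$. By induction each $\Phi$ has exactly one preimage in $\mathcal U$, so $\Def$ is bijective.

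\emph{Product formula.} Put $P_J=\prod_{j=0}^{J}\sigma^j(\Phi)$. The key estimate is
\[
\sigma^j(\Phi)-1=\sum_r\phi_r^{q^j}Z^r,
\]
so for fixed $r$ the coefficients satisfy $|\phi_r|^{q^j}\to0$. The coefficient of $Z^r$ in $P_J$ involves only the factors' coefficients of degree $\le r$, namely finitely many $\phi_i$. Passing from $P_{J-1}$ to $P_J$ changes this coefficient by at most $\max_{i\le r}|\phi_i|^{q^J}$, which tends to zero. Hence the product converges coefficientwise. The limit $P$ lies in $\mathcal U$, since each $|\text{coeff}|\le\max_{i\le r}|\phi_i|<1$. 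Moreover $\sigma(P)=\prod_{j\ge1}\sigma^j(\Phi)$ by continuity of $\sigma$ coefficientwise, so $P/\sigma(P)=\Phi$, and uniqueness gives $P=\mathfrak F(\Phi)$.

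\emph{Gauss-norm convergence.} This is the step I expect to be the main obstacle, since no uniform bound $<1$ on the $\phi_r$ is assumed. I would use \eqref{eq:frobenius-gauss}: for $\rho<1$ the norm $\norm{\Phi-1}_{\rho^{1/q^j}}$ need not be finite. So I would argue directly instead. Since $|\phi_r|<1$, we get $\norm{\sigma^j(\Phi)-1}_\rho=\sup_r|\phi_r|^{q^j}\rho^r$. Split into $r\le R$ and $r>R$: the terms with $r>R$ are at most $\rho^{R}$, and the finitely many with $r\le R$ tend to zero as $j\to\infty$. Hence $\norm{\sigma^j(\Phi)-1}_\rho\to0$. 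The multiplicative ultrametric estimate
\[
\norm{P_J-P_{J-1}}_\rho\le\norm{P_{J-1}}_\rho\norm{\sigma^J(\Phi)-1}_\rho
\]
with $\norm{P_{J-1}}_\rho=1$ then gives Cauchy convergence.

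\emph{Filtration.} Suppose $F\equiv G\pmod{Z^m}$. Then $\sigma(F)\equiv\sigma(G)\pmod{Z^m}$, so $\Def(F)\equiv\Def(G)\pmod{Z^m}$. For the inverse direction, the recursion shows that $d_r$ depends only on $\phi_1,\dots,\phi_r$.

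\emph{Isometry.} Write $H=F/G\in\mathcal U$ and $H-1=hZ^m+\cdots$ with $h\neq 0$. Then
\[
\Def(F)-\Def(G)=\Def(G)\bigl(\Def(H)-1\bigr),
\qquad
F-G=G(H-1).
\]
Units of $\mathcal U$ have Gauss norm one on $|Z|\le\rho$, and the Gauss norm is multiplicative, so it suffices to show $\norm{\Def(H)-1}_\rho=\norm{H-1}_\rho$. Now
\[
\Def(H)-1=\bigl((H-1)-(\sigma(H)-1)\bigr)\sigma(H)^{-1}.
\]
The coefficients of $\sigma(H)-1$ are $a_r^q$, and $|a_r^q|<|a_r|$ whenever $a_r\ne0$, since $|a_r|<1$. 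Hence coefficientwise $|a_r-a_r^q|=|a_r|$, so $\norm{(H-1)-(\sigma(H)-1)}_\rho=\norm{H-1}_\rho$ exactly. Multiplying by the unit $\sigma(H)^{-1}$, which has norm one, gives \eqref{eq:defect-isometry}.
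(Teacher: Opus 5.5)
Your proposal is correct and follows the paper's proof essentially step for step: the Artin--Schreier recursion gives bijectivity, the split into $r\le R$ and $r>R$ shows $\norm{\sigma^j(\Phi)-1}_\rho\to0$ so that norm-one factors give Gauss-norm convergence, and the coefficientwise identity $|a_r-a_r^q|=|a_r|$ applied to $F/G$ gives the isometry; your congruence-mod-$Z^m$ treatment of the filtration is a rephrasing of the paper's leading-coefficient computation $a_s-a_s^q\neq0$. One small correction to an aside: $\norm{\Phi-1}_{\rho^{1/q^j}}$ is always finite (indeed $<1$), and the actual difficulty is only that it need not stay bounded away from $1$ uniformly in $j$; your direct argument does not rely on that aside.
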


\begin{proof}
Multiplicativity follows from commutativity and multiplicativity
of $\sigma$.
Coefficient comparison gives \eqref{eq:resolution-recursion}.
Its right-hand side depends only on earlier coefficients
and lies in $\fm$.
Lemma~\ref{lem:artin-schreier} gives a unique $d_r\in\fm$
inductively, proving bijectivity.

Fix $\rho<1$ and put
\[
\varepsilon_j(\rho)
=
\norm{\sigma^j(\Phi)-1}_\rho
=
\sup_{r\ge1}|\phi_r|^{q^j}\rho^r.
\]
This decreases to zero.
For $r>R$, the terms are bounded by $\rho^{R+1}$;
the finitely many terms with $r\le R$ tend to zero.

Each factor has norm one.
For
\[
P_M=\prod_{j=0}^{M-1}\sigma^j(\Phi),
\]
the ultrametric product estimate gives
\[
\norm{P_{M'}-P_M}_\rho
\le
\sup_{j\ge M}\varepsilon_j(\rho)
=
\varepsilon_M(\rho),
\qquad M'>M.
\]
Thus the products converge on every smaller closed disc
and coefficientwise.

The limit belongs to $\mathcal U$.
For fixed $r$, let
\[
a_r=\max_{1\le i\le r}|\phi_i|<1.
\]
Every contribution to a positive-degree coefficient of degree
at most $r$ contains a nonconstant factor of norm at most
$a_r$, while all other factors have norm at most one.
The relevant coefficients of the finite products and their
limits have norm at most $a_r$.

The finite products satisfy $P_M=\Phi\,\sigma(P_{M-1})$.
Passing to the coefficientwise limit gives
\eqref{eq:frobenius-equation}, and uniqueness proves
\eqref{eq:frobenius-infinite-product}.

If
\[
F=1+a_sZ^s+\text{higher-degree terms},\qquad a_s\ne0,
\]
then
\[
\Def(F)
=
1+(a_s-a_s^q)Z^s+\text{higher-degree terms}.
\]
Since $|a_s|<1$, the displayed coefficient is nonzero.
Applying this to quotients proves preservation of the filtration.

Finally, write $F=1+H$.
Coefficientwise, $|h_r-h_r^q|=|h_r|$, so
\[
\norm{H-\sigma(H)}_\rho=\norm{H}_\rho.
\]
The series $\sigma(F)$ and its inverse have norm one.
Therefore
\[
\norm{\Def(F)-1}_\rho
=
\norm{\frac{H-\sigma(H)}{\sigma(F)}}_\rho
=
\norm{F-1}_\rho.
\]
Apply this identity to $F/G$.
Multiplication or division by $G$ and $\Def(G)$ preserves
the norm, since these units and their inverses have norm one.
This proves \eqref{eq:defect-isometry}.
\end{proof}

\begin{remark}[Topology and analytic scope]
The isometry concerns $\Def$, not $\sigma$.
The product \eqref{eq:frobenius-infinite-product} generally
does not converge in the $Z$-adic topology:
its coefficients converge in absolute value rather than
eventually stabilizing.

Elements of $\mathcal U$ are analytic on the open unit disc,
but need not be entire.
For example,
\[
1+\frac{cZ}{1-Z}\in\mathcal U,\qquad 0<|c|<1,
\]
is not entire.
\end{remark}

\begin{remark}[Why coefficient Frobenius is different over $\Cp$]
\label{rem:padic-frobenius}
Theorem~\ref{thm:frobenius-resolution} uses two properties:
$a\mapsto a^q$ is a ring automorphism, and its iterates
send every small coefficient to zero.
Over $\Cp$, this power map is not additive.

A genuine unital Frobenius lift on a suitable
characteristic-zero coefficient ring does not automatically
restore the theorem.
Such a lift fixes $p$; if it fixes $Z$, then
\[
\sigma(1+pZ)=1+pZ.
\]
Thus a nontrivial small-coefficient series lies in the kernel
of $F\mapsto F/\sigma(F)$.

The scalar equation $x^q-x=b$ still has a unique small solution
over $\Cp$ for $|b|_p<1$, by contraction.
What is special here is compatibility with additive Frobenius,
the explicit Artin--Schreier series, and the multiplicative
resolution.
The polynomial-defect classification below describes additional
Frobenius structure on an eigenvalue multiset; it is not a
reformulation of a $p$-adic Stirling expansion.
\end{remark}

\subsection{Diagonal Fredholm determinants and polynomial defects}

We specify the determinant convention needed in this paper.
A bounded linear operator is called completely continuous
here if it is a norm limit of finite-rank operators.
This terminology is used in non-archimedean Fredholm theory;
see \cite{Serre}.
It is not an assertion that the image of every bounded set
is topologically relatively compact.

For a finite-rank operator $B$, define $\det(I-ZB)$ by
restricting to any finite-dimensional invariant subspace
containing its image.
The result is independent of that subspace, since the induced
operator on an added quotient is zero.

\begin{definition}[Diagonal Fredholm determinant]
\label{def:diagonal-fredholm}
Let $T_\lambda$ be the diagonal operator on $c_0(\Cinf)$ with
entries $\lambda_n\to0$.
Let $T_\lambda^{(M)}$ agree with $T_\lambda$ on the first
$M+1$ standard basis vectors and be zero on the remaining ones.
Its Fredholm determinant is
\[
\Delta_\lambda(Z)
=
\det(I-ZT_\lambda)
:=
\lim_{M\to\infty}\det(I-ZT_\lambda^{(M)})
=
\prod_{n\ge0}(1-\lambda_nZ).
\]
The limit is taken in the Tate norm on every closed disc.
Its existence and zero structure are proved in the next lemma.
\end{definition}

This is the diagonal specialization of the usual
non-archimedean Fredholm determinant.
All determinant arguments below use this explicit construction,
so no general spectral theorem for arbitrary Banach spaces
is required.

\begin{lemma}[Convergent diagonal products]
\label{lem:diagonal-products}
Let $\lambda_n\to0$ in $\Cinf$.
Then
\[
\Delta_\lambda(Z)=\prod_{n\ge0}(1-\lambda_nZ)
\]
converges in the Tate norm on every closed disc and defines
an entire function.
Its zeros are precisely the reciprocals of the nonzero
$\lambda_n$, with multiplicity equal to the number of
occurrences of that eigenvalue.
Zero entries do not affect the product.

If, in addition, $\lambda_n\in\fm$ for every $n$, then
$\Delta_\lambda\in\mathcal U$.
\end{lemma}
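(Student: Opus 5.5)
The plan is to show directly that the partial products form a Cauchy sequence in each Tate algebra $\Cinf\langle Z/R\rangle$, for every $R>0$ in the value group, and then to read off the zeros by factoring. Fix $R$ and put
\[
P_M(Z)=\prod_{n=0}^{M}(1-\lambda_nZ),
\]
which is the finite-rank determinant $\det(I-ZT_\lambda^{(M)})$ because the matrix is diagonal. Each factor has Gauss norm $\norm{1-\lambda_nZ}_R=\max\{1,|\lambda_n|R\}$. Since $\lambda_n\to0$, only finitely many indices satisfy $|\lambda_n|R\ge1$, so the products $P_M$ have Gauss norms bounded by a constant $C_R$ independent of $M$. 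Multiplicativity of the Gauss norm then gives
\[
\norm{P_{M+1}-P_M}_R=\norm{P_M}_R\,|\lambda_{M+1}|R\le C_R|\lambda_{M+1}|R\to0 .
\]
By the ultrametric inequality this is a Cauchy sequence, so the limit exists in $\Cinf\langle Z/R\rangle$. These limits are compatible as $R$ varies, because restriction is norm-decreasing. Hence they define one power series converging on every closed disc, that is, an entire function. Zero entries contribute factors equal to $1$, so they do not change the product.

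For the zero structure, I fix $R$ and split the index set into the finite set $N_R=\{n:|\lambda_n|R\ge1\}$ and its complement. I then write
\[
\Delta_\lambda=\prod_{n\in N_R}(1-\lambda_nZ)\cdot E_R,
\]
where $E_R$ is the convergent product over the remaining indices. The first step is to note that each factor of $E_R$ satisfies $\norm{(1-\lambda_nZ)-1}_R<1$. By the same ultrametric product estimate as in the proof of Theorem~\ref{thm:frobenius-resolution}, this gives $\norm{E_R-1}_R<1$. For the strictness one needs $\sup_{n\notin N_R}|\lambda_nR|<1$, which holds because the sequence tends to zero and so its supremum is attained. Consequently $E_R$ is a unit in $\Cinf\langle Z/R\rangle$: its inverse is $\sum_k(1-E_R)^k$. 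In particular $E_R$ has no zeros on $|Z|\le R$.

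With this in hand, the zeros of $\Delta_\lambda$ in $|Z|\le R$ are exactly the zeros of the finite polynomial $\prod_{n\in N_R}(1-\lambda_nZ)$. These are the points $\lambda_n^{-1}$ for $n\in N_R$, with multiplicity equal to the number of occurrences of each value. Every nonzero $\lambda_n$ satisfies $|\lambda_n^{-1}|\le R$ exactly when $n\in N_R$, so letting $R\to\infty$ gives the full statement about zeros and multiplicities. I expect this unit argument for $E_R$ to be the only delicate step, because it needs the strict bound $<1$ and not just $\le1$. I would handle it precisely as above: the supremum over the complement of $N_R$ is attained and is therefore strictly less than $1$.

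Finally, suppose every $\lambda_n\in\fm$. Each factor $1-\lambda_nZ$ lies in $\mathcal U$, and so do the finite products $P_M$. For each fixed degree $r\ge1$, I would bound the degree-$r$ coefficient of $P_M$ by $\max_n|\lambda_n|^{\,r}\cdot$(a bound that does not depend on $M$). More simply, that coefficient is the $r$th elementary symmetric function of $\lambda_0,\dots,\lambda_M$, whose absolute value is at most $\max_n|\lambda_n|^r$. This maximum exists because the sequence tends to zero, and it is $<1$. The bound persists in the coefficientwise limit, which is implied by Tate-norm convergence. Hence $\Delta_\lambda\in\mathcal U$.
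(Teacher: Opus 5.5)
Your proof is correct and follows essentially the same route as the paper: Cauchy convergence of the partial products in each $\Cinf\langle Z/R\rangle$, using a uniform bound on the finitely many large factors, and a tail product within distance less than $1$ of $1$, hence a unit, so that the zeros come from a finite polynomial. The membership in $\mathcal U$ is the same bound $\max_n|\lambda_n|^r<1$ on elementary symmetric functions, and the only differences are cosmetic: you estimate consecutive differences by multiplicativity of the Gauss norm instead of tail products, and you take the tail to be $\{n:|\lambda_n|R<1\}$ rather than just a sufficiently far tail.
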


\begin{proof}
Fix a radius $R$ in the value group.
For sufficiently large $M$,
\[
\norm{\prod_{n=M+1}^{M'}(1-\lambda_nZ)-1}_R
\le
R\sup_{n>M}|\lambda_n|\longrightarrow0.
\]
The finite initial factors have bounded product norm on
the fixed disc, so the full products are Cauchy.
The construction is unchanged by reordering, since only
finitely many factors fail any prescribed small-tail bound.

A sufficiently far tail differs from one by norm less than
one on the disc and is therefore an analytic unit.
The zeros are those of a finite initial product, with the
usual multiplicities.
Every fixed nonzero eigenvalue occurs finitely often.

If all $|\lambda_n|<1$, their convergence to zero gives
an $a<1$ with $|\lambda_n|\le a$ for every $n$.
The degree-$r$ coefficients of the finite products have norm
at most $a^r$.
The same holds in the limit, proving membership in $\mathcal U$.
\end{proof}

\begin{theorem}[Polynomial-defect products]
\label{thm:frobenius-geometric}
Let $\Phi\in\mathcal U\cap\Cinf[Z]$.
If $\Phi\ne1$, it factors as
\[
\Phi(Z)=\prod_{i=1}^s(1-c_iZ),
\qquad c_i\in\fm\setminus\{0\},
\]
with repetitions allowed.
Its normalized solution is entire and equals
\begin{equation}\label{eq:geometric-product}
\mathfrak F(\Phi)(Z)
=
\prod_{i=1}^s\prod_{n\ge0}(1-c_i^{q^n}Z).
\end{equation}
For $\Phi=1$, the solution is $1$.

For a single seed $c\ne0$, write
\[
\mathfrak F(1-cZ)=\sum_{r\ge0}d_rZ^r.
\]
Then
\[
d_0=1,\qquad d_r^q-d_r=c\,d_{r-1}^q,
\]
and
\begin{equation}\label{eq:geometric-valuations}
|d_r|=|c|^{(q^r-1)/(q-1)},
\qquad
v_\pi(d_r)=\frac{q^r-1}{q-1}v_\pi(c).
\end{equation}
\end{theorem}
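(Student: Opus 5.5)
First I factor $\Phi$. Since $\Phi\in\Cinf[Z]$ has constant term one and $\Cinf$ is algebraically closed, we can write $\Phi(Z)=\prod_{i=1}^s(1-c_iZ)$ with $c_i\ne0$, where the $c_i^{-1}$ are the roots. To see that every $|c_i|<1$, I look at the Newton polygon. The coefficients $\phi_r$ for $r\ge1$ satisfy $|\phi_r|<1$, and $\phi_0=1$. Then $\prod_{i\le r}$ of the $r$ largest $|c_i|$ equals the Newton-polygon value at $r$. Equivalently, the segment starting at the point $(0,0)$ has negative slope in $-\log|\cdot|$ terms. Concretely, let $M=\max_i|c_i|$, attained by $k$ of the roots. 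If $M\ge1$, the coefficient $\phi_k$ of $Z^k$ has absolute value exactly $M^k\ge1$. This holds because the monomial in the elementary symmetric function built from those $k$ roots strictly dominates all others. That contradicts $\Phi\in\mathcal U$, so $c_i\in\fm\setminus\{0\}$. The case $\Phi=1$ is immediate, since $1$ solves the equation and uniqueness holds by Theorem~\ref{thm:frobenius-resolution}.

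Next I compute the solution. Because $\sigma$ acts on coefficients, $\sigma^j(1-cZ)=1-c^{q^j}Z$. By the infinite product formula \eqref{eq:frobenius-infinite-product} and multiplicativity of $\sigma$, we get $\mathfrak F(\Phi)=\prod_j\sigma^j(\Phi)=\prod_j\prod_i(1-c_i^{q^j}Z)$. Since the factors can be rearranged (coefficientwise convergence, finitely many $i$), this is a finite product of single-seed products. Alternatively, I would verify directly that $G=\prod_i\prod_n(1-c_i^{q^n}Z)$ lies in $\mathcal U$ and satisfies $G=\Phi\,\sigma(G)$, then invoke uniqueness. For entireness, the sequence $(c_i^{q^n})_{i,n}$ tends to zero and lies in $\fm$. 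Lemma~\ref{lem:diagonal-products} then gives an entire function in $\mathcal U$, and the functional equation is a telescoping of the reindexed product.

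For the single seed, the recursion is the specialization of \eqref{eq:resolution-recursion} with $\phi_1=-c$ and $\phi_i=0$ for $i\ge2$. That gives $d_r^q-d_r=c\,d_{r-1}^q$. For the valuation formula I use induction on $r$ with Lemma~\ref{lem:artin-schreier}. The unique small solution satisfies $|d_r|=|c\,d_{r-1}^q|=|c|\cdot|c|^{q(q^{r-1}-1)/(q-1)}$. Since $1+q(q^{r-1}-1)/(q-1)=(q^r-1)/(q-1)$, this proves \eqref{eq:geometric-valuations}. One subtlety needs care: Lemma~\ref{lem:artin-schreier} gives $|x|=|b|$ including the nonzero case, so $d_r\ne0$ for all $r$ by induction. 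The $v_\pi$ statement is just a restatement.

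The main obstacle is the first step, showing that all $|c_i|<1$ from coefficientwise smallness. I would handle it through the dominant-term argument for the elementary symmetric function $e_k$ at the index $k$ equal to the number of roots of maximal absolute value.
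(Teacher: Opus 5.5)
Your proposal is correct and follows essentially the paper's route. You factor $\Phi$ over the algebraically closed field, identify $\mathfrak F(\Phi)$ with the orbit product via the Frobenius shift (or the infinite-product formula) and uniqueness in $\mathcal U$, and obtain $|d_r|=|c|\,|d_{r-1}|^q$ from the recursion and Lemma~\ref{lem:artin-schreier}. The step you flag as the main obstacle, $|c_i|<1$, is handled correctly by your dominant-term argument, but the paper disposes of it in one line: since the finitely many $\phi_r$ all satisfy $|\phi_r|<1$, one has $|\Phi(Z)-1|\le\max_r|\phi_r|<1$ on $|Z|\le1$, so $\Phi$ has no zeros in the closed unit disc.
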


\begin{proof}
Since $\Cinf$ is algebraically closed and $\Phi(0)=1$,
the polynomial factors into terms $1-c_iZ$ with $c_i\ne0$.
It has no root in $|Z|\le1$, because $|\Phi(Z)-1|<1$ there.
Hence $|c_i|<1$.

For one seed, Lemma~\ref{lem:diagonal-products} shows that
\[
P_c(Z)=\prod_{n\ge0}(1-c^{q^n}Z)
\]
is entire and lies in $\mathcal U$.
Frobenius shifts the product:
\[
\sigma(P_c)(Z)=\prod_{n\ge1}(1-c^{q^n}Z).
\]
Thus $P_c=(1-cZ)\sigma(P_c)$, and uniqueness gives
$P_c=\mathfrak F(1-cZ)$.
Multiplicativity proves \eqref{eq:geometric-product}.

Coefficient comparison gives the recursion.
Lemma~\ref{lem:artin-schreier} gives
\[
|d_r|=|c|\,|d_{r-1}|^q,
\]
which proves \eqref{eq:geometric-valuations}.

There is also a direct check:
\[
d_r=(-1)^r
\sum_{0\le n_1<\cdots<n_r}
c^{q^{n_1}+\cdots+q^{n_r}}.
\]
The term of smallest valuation is unique and corresponds
to $\{n_1,\ldots,n_r\}=\{0,\ldots,r-1\}$.
Its exponent is $1+q+\cdots+q^{r-1}$.
The ultrametric inequality therefore gives exactly the
same valuation formula.
\end{proof}

For several seeds, cancellations may occur among coefficients.
The exact valuation formula is asserted only for one seed.

\begin{corollary}[Classification by polynomial defect]
\label{cor:frobenius-determination}
Let $(\lambda_n)\subset\fm$ tend to zero.
Then $\Def(\Delta_\lambda)$ is polynomial if and only if
there is a finite multiset
$\{c_1,\ldots,c_s\}\subset\fm\setminus\{0\}$ such that
\[
\{\lambda_n:\lambda_n\ne0\}
=
\bigsqcup_{i=1}^s\{c_i^{q^j}:j\ge0\}
\]
as multisets.
The empty multiset is allowed.
In this case,
\[
\Def(\Delta_\lambda)=\prod_{i=1}^s(1-c_iZ),
\]
and this polynomial uniquely determines the seed multiset.
\end{corollary}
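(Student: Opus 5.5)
The plan is to prove the two directions separately, using Lemma~\ref{lem:diagonal-products} to place $\Delta_\lambda$ in $\mathcal U$, and Theorems~\ref{thm:frobenius-resolution} and~\ref{thm:frobenius-geometric} to pass between defects and products.

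\emph{Sufficiency.} Assume the nonzero eigenvalue multiset is $\bigsqcup_i\{c_i^{q^j}:j\ge0\}$. By Lemma~\ref{lem:diagonal-products}, $\Delta_\lambda\in\mathcal U$, and zero entries do not affect it. The convergent product is independent of the ordering of its factors, so it can be regrouped orbit by orbit:
\[
\Delta_\lambda=\prod_{i=1}^s\prod_{n\ge0}(1-c_i^{q^n}Z).
\]
The right-hand side is $\mathfrak F(\Phi)$ for $\Phi=\prod_i(1-c_iZ)$, by Theorem~\ref{thm:frobenius-geometric}. Applying $\Def$ gives $\Def(\Delta_\lambda)=\Phi$, which is a polynomial. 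The empty multiset gives $\Delta_\lambda=1$ and defect $1$.

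\emph{Necessity.} Assume $\Phi:=\Def(\Delta_\lambda)$ is a polynomial. It lies in $\mathcal U$, so Theorem~\ref{thm:frobenius-geometric} factors it as $\prod_i(1-c_iZ)$ with $c_i\in\fm\setminus\{0\}$, or gives $\Phi=1$. Since $\Def$ is injective by Theorem~\ref{thm:frobenius-resolution}, we get
\[
\Delta_\lambda=\mathfrak F(\Phi)=\prod_i\prod_{n}(1-c_i^{q^n}Z).
\]
The right-hand side is also the diagonal product for the multiset $M=\bigsqcup_i\{c_i^{q^j}\}$. This multiset tends to zero, because each orbit tends to zero and there are finitely many orbits. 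Lemma~\ref{lem:diagonal-products} says the zeros of such an entire product, counted with multiplicity, are exactly the reciprocals of its nonzero entries with their multiplicities. Comparing zero multisets of the two equal entire functions, the nonzero $\lambda_n$ and $M$ agree as multisets.

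\emph{Uniqueness of seeds.} The polynomial $\Phi=\prod(1-c_iZ)$ has as its zero multiset the reciprocals $c_i^{-1}$, with multiplicity. Since $\Cinf[Z]$ is a UFD, $\Phi$ determines the multiset $\{c_i\}$. The formula for $\Def(\Delta_\lambda)$ was obtained in both directions.

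\emph{Main obstacle.} The delicate point is the multiplicity bookkeeping in the necessity step. Two things must be checked. First, distinct seeds may share orbit elements; for example, $c_2=c_1^q$ makes the orbits overlap. This is harmless: the disjoint union is a multiset union, and the zero-multiplicity statement of Lemma~\ref{lem:diagonal-products} applies to the combined multiset. Second, the zero multiplicities of an entire function really are intrinsic. This holds because multiplicity is local and the lemma computes it from a finite initial product.
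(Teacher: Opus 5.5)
Your proof is correct and follows the paper's argument essentially step for step: for necessity, uniqueness of the normalized solution together with the zero-multiplicity part of Lemma~\ref{lem:diagonal-products}; for sufficiency, regrouping the convergent product into orbit products; and for the seeds, uniqueness of polynomial factorization. The paper justifies the regrouping by uniform tail convergence on every closed disc, which is the same fact that underlies the reordering invariance you cite.
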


\begin{proof}
If the defect is polynomial, factor it as in
Theorem~\ref{thm:frobenius-geometric}.
The diagonal determinant and the corresponding orbit product
are normalized solutions of the same equation, so they agree.
Lemma~\ref{lem:diagonal-products} identifies their nonzero
eigenvalue multisets from reciprocal zeros, including
multiplicities.

Conversely, the multiset identity gives the orbit product,
whose defect is the displayed polynomial.
Regrouping is valid by uniform tail convergence on every
closed disc.
Polynomial factorization gives uniqueness of the seeds.
\end{proof}

The polynomial condition is substantive.
For example,
\[
\Delta(Z)=1-cZ,\qquad 0<|c|<1,
\]
has defect
\[
\frac{1-cZ}{1-c^qZ},
\]
which is not polynomial.
Its one-element spectrum is not a complete forward Frobenius orbit.
The classification concerns nonzero diagonal eigenvalue
multisets, not all operator structures or zero-eigenspace
dimensions.

\subsection{The Mellin transform of a logarithmic derivative}

To connect determinant products with distributions on $\cO$,
the eigenvalues must belong to $\cO$.
Smallness in $\Cinf$ alone does not imply this.

\begin{proposition}[Weighted diagonal spectral identity]
\label{prop:diagonal-mellin}
Suppose
\[
\lambda_n\in\cO\cap\fm,\qquad \lambda_n\to0.
\]
Let $T_\lambda$ be the corresponding diagonal operator
on $c_0(\Cinf)$, and define
\[
\nu_\lambda=\sum_{n\ge0}\lambda_n\delta_{\lambda_n},
\qquad
\Lambda_\lambda(B)=\sum_{n\ge0}\lambda_nB_{nn}.
\]
Then $\nu_\lambda$ is a bounded measure,
$\Lambda_\lambda$ is continuous, and
$(T_\lambda,\Lambda_\lambda)$ is admissible with
spectral distribution $\nu_\lambda$.
Moreover,
\[
m_k(\nu_\lambda)=\sum_{n\ge0}\lambda_n^{k+1},
\]
and
\begin{equation}\label{eq:diagonal-mellin}
\frac{\Delta_\lambda'(Z)}{\Delta_\lambda(Z)}
=
-\cM_{\nu_\lambda}(-1,-Z),
\qquad |Z|<1.
\end{equation}
\end{proposition}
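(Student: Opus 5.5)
The plan is to verify the assertions in order: boundedness of $\nu_\lambda$ and continuity of $\Lambda_\lambda$, then the scalar resolvent, then admissibility via Proposition~\ref{prop:duality}, and finally the moment formula and \eqref{eq:diagonal-mellin} by comparing power series.

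\textbf{Boundedness and continuity.} Since $\lambda_n\to0$, the partial sums $\sum_{n\le M}\lambda_n\delta_{\lambda_n}$ converge in the dual norm of $C(\cO,\Cinf)$, because $|\lambda_n f(\lambda_n)|\le|\lambda_n|\,\norm{f}_\infty$. Hence $\nu_\lambda$ is a bounded measure with $\norm{\nu_\lambda}\le\sup_n|\lambda_n|<1$. For a bounded operator $B$ on $c_0(\Cinf)$, the diagonal entries satisfy $|B_{nn}|\le\norm{B}$. So $\Lambda_\lambda(B)=\sum_n\lambda_nB_{nn}$ converges, and $|\Lambda_\lambda(B)|\le\sup_n|\lambda_n|\,\norm{B}$.

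\textbf{Resolvent and admissibility.} For $z\notin\cO$, every $\lambda_n$ lies in $\cO$, and $0$ lies in the closure of the $\lambda_n$. Since $\cO$ is compact, $|z-\lambda_n|\ge\operatorname{dist}(z,\cO)>0$ uniformly in $n$. Therefore $zI-T_\lambda$ has the bounded diagonal inverse with entries $(z-\lambda_n)^{-1}$. This gives
\[
F_{T_\lambda,\Lambda_\lambda}(z)=\sum_n\frac{\lambda_n}{z-\lambda_n}=\nu_\lambda\bigl(x\mapsto (z-x)^{-1}\bigr)=S_{\nu_\lambda}(z).
\]
The middle equality is just evaluation of the point masses; no interchange issue arises, because $\nu_\lambda$ is a norm-convergent sum of point masses and $x\mapsto(z-x)^{-1}$ is continuous on $\cO$. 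Since a bounded measure lies in $\cL^*(\cO)$, Proposition~\ref{prop:duality} puts $S_{\nu_\lambda}$ in $\cH_0(\cO)$. Thus the pair is admissible, and $V(F)=\nu_\lambda$ by the inverse relation $VS=\mathrm{id}$. The moment formula $m_k(\nu_\lambda)=\sum_n\lambda_n^{k+1}$ follows either directly from the definition or from Proposition~\ref{prop:spectral-moments}, since $\Lambda_\lambda(T_\lambda^k)=\sum_n\lambda_n\lambda_n^k$.

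\textbf{Logarithmic derivative.} By Lemma~\ref{lem:diagonal-products}, the partial products $P_M=\prod_{n\le M}(1-\lambda_nZ)$ converge to $\Delta_\lambda$ in the Gauss norm on each disc $|Z|\le\rho<1$. Each $P_M$ and $\Delta_\lambda$ lies in $\mathcal U$; their coefficients are bounded by $a^r$ with $a=\sup|\lambda_n|<1$, so they are units of norm one on such discs. The formal derivative is continuous in the Gauss norm, since $\norm{F'}_\rho\le\rho^{-1}\norm{F}_\rho$. Hence $P_M'/P_M\to\Delta_\lambda'/\Delta_\lambda$. For finite $M$, we have
\[
\frac{P_M'}{P_M}=-\sum_{n\le M}\frac{\lambda_n}{1-\lambda_nZ}=-\sum_{k\ge0}\Bigl(\sum_{n\le M}\lambda_n^{k+1}\Bigr)Z^k.
\]
Passing to the limit, which is uniform in $k$ with the tail bounded by $\sup_{n>M}|\lambda_n|$, gives $-\sum_k m_k(\nu_\lambda)Z^k$. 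By \eqref{eq:mellin-expansion} at $y=-1$, where $\rbinom{-1}{k}=(-1)^k$, we get
\[
\cM_{\nu_\lambda}(-1,-Z)=\sum_k(-1)^km_k(-Z)^k=\sum_k m_kZ^k.
\]
This yields \eqref{eq:diagonal-mellin}.

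\textbf{Main obstacle.} The only delicate point is justifying the limit interchange in the logarithmic derivative, that is, the uniform invertibility of the partial products on $|Z|\le\rho$. This is handled by the norm-one unit property noted above. The uniform bound on $|z-\lambda_n|^{-1}$ used for admissibility is also needed, and it comes from the compactness of $\cO$.
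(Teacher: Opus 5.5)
Your proposal is correct and follows the paper's proof step by step. Both arguments use the norm-convergent sum of point masses, the bound $|B_{nn}|\le\norm{B}$, the identification of the diagonal resolvent with $S_{\nu_\lambda}$ followed by inverse Stieltjes duality, and the Tate-norm passage to the limit in the logarithmic derivatives of the finite products; you simply make explicit the norm-one unit bound, the estimate $\norm{F'}_\rho\le\rho^{-1}\norm{F}_\rho$, and the final comparison with the $y=-1$ Mellin expansion, which the paper leaves implicit.
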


\begin{proof}
The norms of $\lambda_n\delta_{\lambda_n}$ tend to zero,
so the measure series converges in the Banach dual of
$C(\cO,\Cinf)$.
Since $|B_{nn}|\le\norm{B}$, the scalarization is also
well-defined and continuous.

For $z\notin\cO$, the diagonal inverse exists and
\[
\Lambda_\lambda((zI-T_\lambda)^{-1})
=
\sum_{n\ge0}\frac{\lambda_n}{z-\lambda_n}
=
S_{\nu_\lambda}(z).
\]
Inverse Stieltjes duality proves admissibility.
The moment formula is immediate.

For finite products,
\[
\frac{\left(\prod_{n=0}^M(1-\lambda_nZ)\right)'}
{\prod_{n=0}^M(1-\lambda_nZ)}
=
-\sum_{n=0}^M\frac{\lambda_n}{1-\lambda_nZ}.
\]
On each closed disc of radius less than one, the products
and their inverses converge.
Differentiation is continuous in the Tate norm, so passage
to the limit proves \eqref{eq:diagonal-mellin}.
\end{proof}

The weights $\lambda_n$ are essential.
The unweighted sum $\sum_n\delta_{\lambda_n}$ is not
defined by norm convergence, and evaluation on the constant
function would involve the divergent series $\sum_n1$.

\begin{remark}[Resolvents, log-gamma functionals, and Fredholm products]
\label{rem:padic-logarithmic-comparison}
For sufficiently large $|\lambda|_p$, write
\[
L_{\mu,p}(\lambda)=\log\Gamma_p^\mu(\lambda),
\qquad
L_{\nu,p,E}(\lambda)=\log\Gamma_{p,E}^\nu(\lambda).
\]
The normalized integral representations in
\cite[Section~4]{HuKimInverse} are
\[
L_{\mu,p}(\lambda)
=
\mu_x\bigl((\lambda+x)(\log_p(\lambda+x)-1)\bigr)
\]
and
\[
L_{\nu,p,E}(\lambda)
=
-\nu_x\bigl((\lambda+x)\log_p(\lambda+x)\bigr).
\]
Differentiating twice in $\lambda$ gives
\[
L_{\mu,p}''(\lambda)
=
\mu_x((\lambda+x)^{-1})
=
-S_\mu(-\lambda),
\]
and
\[
L_{\nu,p,E}''(\lambda)
=
-\nu_x((\lambda+x)^{-1})
=
S_\nu(-\lambda).
\]
These differentiations are justified locally in a fixed
analytic neighborhood of the support.

Thus the Stieltjes kernel appears after two derivatives
of the normalized $p$-adic log-gamma functionals.
In the present setting it appears instead as the logarithmic
derivative of a diagonal Fredholm product:
\[
\frac{\Delta_\lambda'(Z)}{\Delta_\lambda(Z)}
=
-\cM_{\nu_\lambda}(-1,-Z).
\]
The weighted measure here is specified by the diagonal
eigenvalues.

The $p$-adic Stirling expansion uses the ordinary logarithm
and denominators such as $n(n+1)$, so it cannot be transferred
term by term to characteristic $p$.
Here the Mellin slice determines the logarithmic derivative,
whereas independently prescribed Frobenius data determine
the normalized determinant.
No general identification of these determinant constructions
is asserted.
\end{remark}

\subsection{The shifted Carlitz operator}
\label{sec:shifted-carlitz}

Consider the Carlitz--Bargmann--Fock-type space studied in
\cite[Section~1.7]{Kochubei}; see also \cite{KochubeiOscillator}.
Using the local parameter $\pi$, define
\[
\mathscr H=
\left\{
u(X)=\sum_{n\ge0}a_nu_n(X):
a_n\in\Cinf,\ a_n\to0
\right\},
\qquad
u_n(X)=\frac{X^{q^n}}{\CD_n},
\]
with
\[
\norm{u}_{\mathscr H}=\sup_n|a_n|.
\]
Since
\[
|\CD_n|=q^{-(q^n-1)/(q-1)},
\]
these series define analytic functions for
$|X|<q^{-1/(q-1)}$.
The basis $u_n$ differs from the additive Carlitz
polynomials $e_n$ used earlier.

Let
\[
\tau u=u^q,\qquad
\nabla_\pi u(X)=u(\pi X)-\pi u(X),
\qquad
d=\tau^{-1}\nabla_\pi.
\]
The inverse $\tau^{-1}$ is applied only to the range
of $\nabla_\pi$, which lies in the range of $\tau$.
The coefficient formulas are
\[
d(cu_n)=c^{1/q}u_{n-1}\quad(n\ge1),
\qquad d(cu_0)=0,
\]
and
\[
\tau(cu_n)=c^q[n+1]_\pi u_{n+1}.
\]
Thus
\[
\tau(cu)=c^q\tau(u),
\qquad
d(cu)=c^{1/q}d(u).
\]
Both operators are additive and continuous.
More precisely,
\[
\norm{\tau(u)-\tau(v)}_{\mathscr H}
=
q^{-1}\norm{u-v}_{\mathscr H}^{\,q},
\]
and
\[
\norm{d(u)-d(v)}_{\mathscr H}
\le
\norm{u-v}_{\mathscr H}^{\,1/q}.
\]
The operator $d$ is not Lipschitz on any neighborhood of zero:
for $c\ne0$,
\[
\frac{\norm{d(cu_1)}_{\mathscr H}}{\norm{cu_1}_{\mathscr H}}
=
|c|^{1/q-1}\longrightarrow\infty
\quad\text{as }c\to0.
\]

\begin{proposition}\label{prop:carlitz-operator}
The composite $N=\tau d=\nabla_\pi$ is bounded and
$\Cinf$-linear, with
\[
Nu_n=[n]_\pi u_n,\qquad \norm{N}=q^{-1},
\]
and
\[
\Spec(N)=\{[n]_\pi:n\ge0\}\cup\{-\pi\}.
\]
Its shift $A=N+\pi I$ has diagonal entries
\[
\alpha_n=\pi^{q^n},
\]
spectrum
\[
\Spec(A)=\{0\}\cup\{\alpha_n:n\ge0\},
\]
and is completely continuous.
\end{proposition}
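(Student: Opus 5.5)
We first identify $N$ with $\nabla_\pi$ on the basis and read off its diagonal form. On $u_n(X)=X^{q^n}/\CD_n$ we have
\[
\nabla_\pi u_n=\frac{(\pi^{q^n}-\pi)X^{q^n}}{\CD_n}=[n]_\pi u_n.
\]
We then check that $\tau d=\nabla_\pi$ using the displayed coefficient formulas. For $n\ge1$,
\[
\tau d(cu_n)=\tau(c^{1/q}u_{n-1})=c\,[n]_\pi u_n,
\]
and for $n=0$ both sides vanish, since $[0]_\pi=0$. Additivity and continuity extend the identity to all of $\mathscr H$. Thus $N$ is the diagonal operator with entries $[n]_\pi$, and it is $\Cinf$-linear.

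Next come the norm and the spectrum. For $n\ge1$, $|[n]_\pi|=|\pi|=q^{-1}$, because $|\pi^{q^n}|<|\pi|$; the entry for $n=0$ is $0$. A diagonal operator on the $c_0$-type space $\mathscr H$ has norm equal to the supremum of the absolute values of its entries, so $\norm{N}=q^{-1}$. For the spectrum we use a general description of diagonal operators with bounded entries $\delta_n$. The operator $zI-\mathrm{diag}(\delta_n)$ is invertible exactly when $\inf_n|z-\delta_n|>0$, in which case its inverse is the bounded diagonal operator with entries $(z-\delta_n)^{-1}$. Otherwise it fails to be invertible: either some $z-\delta_n$ vanishes, so injectivity fails, or the entries $(z-\delta_n)^{-1}$ are unbounded. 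In the second case we will show non-surjectivity or non-boundedness explicitly. We will test the equation $(z-\delta_n)x_n=a_n$ with a null sequence $(a_n)$ chosen so that the required $x_n$ do not tend to zero. Hence the spectrum is the closure of $\{\delta_n\}$. Since $[n]_\pi\to-\pi$ and $\pi^{q^n}\to0$, this closure is $\{[n]_\pi\}\cup\{-\pi\}$. The point $-\pi$ is not itself an entry, because $\pi^{q^n}\ne0$.

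For the shift, $A=N+\pi I$ is diagonal with entries $[n]_\pi+\pi=\pi^{q^n}=\alpha_n$; note that $n=0$ gives $\alpha_0=\pi$. Its spectrum is then $\Spec(N)+\pi=\{\alpha_n\}\cup\{0\}$. Complete continuity follows because $\alpha_n\to0$. The truncations $A^{(M)}$, which agree with $A$ on $u_0,\ldots,u_M$ and vanish on the remaining basis vectors, are finite rank. They satisfy
\[
\norm{A-A^{(M)}}=\sup_{n>M}|\alpha_n|=q^{-q^{M+1}}\to0,
\]
so $A$ is a norm limit of finite-rank operators.

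The only step requiring care is the spectral claim at accumulation points. One must show that $zI-N$ is not invertible at $z=-\pi$ even though it is injective there. To do this we take $a_n$ with $|a_n|=|[n]_\pi+\pi|^{1/2}=q^{-q^n/2}\to0$. The would-be preimage then has coefficients of absolute value
\[
|a_n|\,|[n]_\pi+\pi|^{-1}=|\alpha_n|^{-1/2}\to\infty,
\]
which is not in $\mathscr H$, so $zI-N$ is not surjective. Everything else is routine diagonal bookkeeping.
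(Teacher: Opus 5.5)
Your proof is correct and follows the paper's argument: you diagonalize $N$ in the basis $(u_n)$, read off the norm, identify the spectrum as the closure of the diagonal entries, shift by $\pi$, and use the truncations $A^{(M)}$ with $\norm{A-A^{(M)}}=q^{-q^{M+1}}$. The only variation is at the accumulation point $-\pi$: you exhibit a null sequence outside the range of $-\pi I-N$, whereas the paper uses the approximate eigenvectors $u_n$, with $\norm{(N+\pi I)u_n}_{\mathscr H}=|\pi|^{q^n}\to0$, to rule out a bounded inverse; either argument suffices.
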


\begin{proof}
The semilinear twists cancel:
\[
N\left(\sum_na_nu_n\right)
=
\sum_na_n[n]_\pi u_n.
\]
For $n\ge1$, $|[n]_\pi|=q^{-1}$, while $[0]_\pi=0$.
This proves linearity and the norm formula.

Outside the closure of the diagonal entries,
$(z-[n]_\pi)^{-1}$ are uniformly bounded and define the
diagonal resolvent.
Every diagonal entry is an eigenvalue.
Since $[n]_\pi\to-\pi$ and
\[
\norm{(N+\pi I)u_n}_{\mathscr H}=|\pi|^{q^n}\to0,
\]
there is no bounded inverse at $-\pi$.

The shifted entries are $\alpha_n$.
Let $A^{(M)}$ agree with $A$ on $u_0,\ldots,u_M$
and be zero on the remaining basis vectors.
Then
\[
\norm{A-A^{(M)}}=q^{-q^{M+1}}\to0.
\]
Thus $A$ is completely continuous.
The same diagonal argument gives its spectrum.
\end{proof}

The relation $\alpha_{n+1}=\alpha_n^q$ supplies the
Frobenius defect directly from the operator, independently
of trace calculations.

\begin{theorem}[The shifted Carlitz determinant]
\label{thm:frobenius-determinant}
The determinants of the finite-rank truncations $A^{(M)}$
converge in the Tate norm on every closed disc to
\begin{equation}\label{eq:fredholm-product}
\Delta_A(Z)
=
\det(I-ZA)
=
\mathfrak F(1-\pi Z)
=
\prod_{n\ge0}(1-\pi^{q^n}Z).
\end{equation}
It is the unique $F\in\mathcal U$ satisfying
\begin{equation}\label{eq:frobenius-determinant}
F=(1-\pi Z)\sigma(F).
\end{equation}
If $\Delta_A(Z)=\sum_{r\ge0}d_rZ^r$, then
\[
d_0=1,\qquad d_r^q-d_r=\pi d_{r-1}^q,
\]
and
\[
v_\pi(d_r)=\frac{q^r-1}{q-1}.
\]
Its Newton polygon has successive slopes
$1,q,q^2,\ldots$, each of horizontal length one.
Its zeros are precisely $\pi^{-q^n}$, $n\ge0$, and are simple.
\end{theorem}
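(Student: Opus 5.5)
The proof is mostly an assembly of earlier results. The only genuinely new ingredient is the Newton polygon computation.

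First, Proposition~\ref{prop:carlitz-operator} shows that $A$ is diagonal in the basis $u_n$, with entries $\alpha_n=\pi^{q^n}\to0$. Since $\mathscr H$ is isometric to $c_0(\Cinf)$ via $u_n\mapsto$ the standard basis vector, the truncations $A^{(M)}$ are exactly the truncations $T_\lambda^{(M)}$ of Definition~\ref{def:diagonal-fredholm} with $\lambda_n=\alpha_n$. Their determinants are the finite products $\prod_{n\le M}(1-\pi^{q^n}Z)$. Lemma~\ref{lem:diagonal-products} then gives convergence in the Tate norm on every closed disc to the entire function $\prod_{n\ge0}(1-\pi^{q^n}Z)$. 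Because $|\alpha_n|<1$, the limit lies in $\mathcal U$. The same lemma shows that the zeros are exactly the reciprocals $\pi^{-q^n}$. These zeros are simple, since the $\alpha_n$ are pairwise distinct: their valuations $q^n$ are distinct.

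Next I identify the product with $\mathfrak F(1-\pi Z)$. This is Theorem~\ref{thm:frobenius-geometric} with the single seed $c=\pi\in\fm\setminus\{0\}$, or directly: $\sigma$ sends $\pi^{q^n}$ to $\pi^{q^{n+1}}$, so the product satisfies \eqref{eq:frobenius-determinant}. Uniqueness of the solution in $\mathcal U$ comes from Theorem~\ref{thm:frobenius-resolution}, since $\Def$ is bijective. The recursion $d_r^q-d_r=\pi d_{r-1}^q$ and the valuation $v_\pi(d_r)=(q^r-1)/(q-1)$ are then \eqref{eq:geometric-valuations} with $v_\pi(c)=1$.

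The main step is the Newton polygon. The points are $(r,w_r)$ with $w_r=(q^r-1)/(q-1)$, and every $d_r$ is nonzero. Successive differences are
\[
w_r-w_{r-1}=q^{r-1}.
\]
These are strictly increasing, so the sequence $w_r$ is strictly convex. Hence every point $(r,w_r)$ is a vertex, and the polygon consists of segments of horizontal length one with slopes $1,q,q^2,\ldots$. Because every point is a vertex, no point lies strictly above the polygon and no segment merges with another, so this describes the polygon completely.

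As a consistency check, by the non-archimedean Newton polygon theory for entire functions, a segment of slope $q^{n}$ and length one corresponds to exactly one zero of valuation $-q^n$. This matches the zero $\pi^{-q^n}$ found above and confirms simplicity independently. The valuation computation itself is already proved in Theorem~\ref{thm:frobenius-geometric}, so I expect no real obstacle beyond stating the convexity argument carefully.
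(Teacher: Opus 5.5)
Your proposal is correct and follows essentially the same route as the paper. In both, the truncated determinants are the finite products, Lemma~\ref{lem:diagonal-products} gives convergence and the zeros (simple because the $\pi^{q^n}$ have distinct absolute values), Theorem~\ref{thm:frobenius-geometric} with $c=\pi$ supplies the equation, uniqueness, recursion and valuations, and the strictly increasing differences $q^{r-1}$ make every coefficient point a vertex of the Newton polygon; your extra Newton-polygon check on the zeros is consistent but not needed.
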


\begin{proof}
The finite-rank determinants are the finite products
in \eqref{eq:fredholm-product}.
Lemma~\ref{lem:diagonal-products} gives entire convergence.
The equation, uniqueness, recursion, and valuations follow
from Theorem~\ref{thm:frobenius-geometric} with $c=\pi$.

The consecutive valuation differences are
\[
v_\pi(d_r)-v_\pi(d_{r-1})=q^{r-1}.
\]
They increase strictly, so every coefficient point is a
vertex of the lower Newton polygon.
The $\pi^{q^n}$ have distinct absolute values, and the
zero assertion follows from Lemma~\ref{lem:diagonal-products}.
\end{proof}

\begin{remark}[The Anderson--Thakur function]
\label{rem:anderson-thakur}
Set $\theta=\pi^{-1}$ and choose $\xi\in\Cinf$ with
$\xi^{q-1}=-\theta$.
The Anderson--Thakur function has the standard product
representation
\[
\omega_\theta(Z)
=
\xi\prod_{n\ge0}
\left(1-\frac{Z}{\theta^{q^n}}\right)^{-1};
\]
see \cite{AndersonThakur,AnglesPellarin}.
Initially the product is defined for $|Z|<|\theta|$,
and it has a meromorphic continuation.
Since $\omega_\theta(0)=\xi$, direct comparison gives
\[
\Delta_A(Z)
=
\frac{\omega_\theta(0)}{\omega_\theta(Z)}.
\]
The ratio is independent of $\xi$ and extends to the
entire function \eqref{eq:fredholm-product}.

The standard equation
\[
\sigma(\omega_\theta)=(Z-\theta)\omega_\theta
\]
is consistent with \eqref{eq:frobenius-determinant},
because $\xi^q=-\theta\xi$.
Thus the operator supplies a Fredholm realization of a
normalized reciprocal of an established function.
The product identity itself follows directly from the
classical formula and is not claimed as a new
Anderson--Thakur identity.
Nor does it identify the local interpolation system with
the global Carlitz module.
\end{remark}

\begin{remark}[Different roles of the spectral examples]
\label{rem:padic-carlitz-example}
The position operator illustrates inverse Stieltjes
reconstruction for a prescribed measure on a compact spectral
set, as in \cite[Section~5]{HuKimInverse}.
That construction does not imply that the position operator
has an ordinary Fredholm product.

The shifted Carlitz operator has a different role:
its nonzero eigenvalues tend to zero, its finite-rank
truncations converge in operator norm, and their determinants
converge to an entire product.
The relation $\alpha_{n+1}=\alpha_n^q$ supplies extra
Frobenius data.

The paper therefore contains two complementary models:
a position operator with non-discrete compact spectrum,
and a completely continuous operator with discrete nonzero
spectrum accumulating at zero.
The Anderson--Thakur identification belongs to the second
model and is not an automatic consequence of inverse
Stieltjes duality.
\end{remark}

Put
\[
s_m=\sum_{n\ge0}\alpha_n^m\quad(m\ge1),
\qquad
\nu_A=\sum_{n\ge0}\alpha_n\delta_{\alpha_n},
\]
and
\[
\Lambda_A(B)=\sum_{n\ge0}\alpha_nB_{nn},
\]
where matrix entries are taken in the basis $(u_n)$.

\begin{corollary}[Weighted Carlitz spectral transform]
\label{cor:weighted-mellin}
The pair $(A,\Lambda_A)$ is admissible and has spectral
distribution $\nu_A$.
Its moments satisfy
\[
m_k(\nu_A)=s_{k+1},
\]
and
\begin{equation}\label{eq:weighted-mellin}
\frac{\Delta_A'(Z)}{\Delta_A(Z)}
=
-\cM_{\nu_A}(-1,-Z),
\qquad |Z|<1.
\end{equation}
Moreover,
\begin{equation}\label{eq:trace-artin-schreier}
s_m^q-s_m=-\pi^m.
\end{equation}
\end{corollary}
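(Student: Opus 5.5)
The first two assertions are a direct specialization of Proposition~\ref{prop:diagonal-mellin}, so I will not reprove them. Take $\lambda_n=\alpha_n=\pi^{q^n}$. These lie in $\cO$, satisfy $|\alpha_n|=q^{-q^n}<1$, and tend to zero. By Proposition~\ref{prop:carlitz-operator}, $A$ is diagonal in the orthonormal basis $(u_n)$ of $\mathscr H$. The map $\sum a_nu_n\mapsto(a_n)$ is an isometry $\mathscr H\cong c_0(\Cinf)$. Under it, $A$ becomes $T_\alpha$ and $\Lambda_A$ becomes $\Lambda_\alpha$, since matrix entries are taken in the basis $(u_n)$. Proposition~\ref{prop:diagonal-mellin} then gives:
\begin{itemize}
\item admissibility of $(A,\Lambda_A)$, with spectral distribution $\nu_A$;
\item the moment formula $m_k(\nu_A)=\sum_n\alpha_n^{k+1}=s_{k+1}$;
\item identity \eqref{eq:weighted-mellin}, once $\Delta_\alpha$ is identified with $\Delta_A$ via Theorem~\ref{thm:frobenius-determinant}.
\end{itemize}
The isometric transport needs only a sentence. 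Admissibility and the resolvent computation are invariant under conjugation by an isometric isomorphism.

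For \eqref{eq:trace-artin-schreier}, I will compute $s_m^q$ directly from the definition. The series $s_m=\sum_{n\ge0}\pi^{mq^n}$ converges, since its terms tend to zero. The $q$-power map is additive and continuous on $\Cinf$, so
\[
s_m^q=\sum_{n\ge0}\pi^{mq^{n+1}}=\sum_{n\ge1}\pi^{mq^n}=s_m-\pi^m.
\]
This gives $s_m^q-s_m=-\pi^m$. Equivalently, $s_m$ is the Artin--Schreier solution of Lemma~\ref{lem:artin-schreier} with $b=\pi^m$, since $x=-\sum_j b^{q^j}$ solves $x^q-x=b$ and $s_m=-x$. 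I would mention this to show that $s_m$ is the unique small solution. I could also note that \eqref{eq:trace-artin-schreier} is consistent with applying $\Def$ to the logarithmic derivative. That check is not needed for the proof.

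No step is a real obstacle. The only point needing care is continuity of Frobenius on convergent series in $\Cinf$, which follows from $|x^q|=|x|^q$ and additivity in characteristic $p$.
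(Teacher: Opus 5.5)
Your proposal is correct and follows the paper's proof. The paper also specializes Proposition~\ref{prop:diagonal-mellin} to $\lambda_n=\alpha_n=\pi^{q^n}\in\cO\cap\fm$ and then computes $s_m^q=\sum_{n\ge0}\pi^{mq^{n+1}}=s_m-\pi^m$ termwise, using additivity and continuity of Frobenius. Your explicit isometric identification $\mathscr H\cong c_0(\Cinf)$ through the basis $(u_n)$ just spells out a transport step that the paper leaves implicit.
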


\begin{proof}
All $\alpha_n$ lie in $\cO\cap\fm$ and tend to zero,
so Proposition~\ref{prop:diagonal-mellin} applies.
Continuity and additivity of Frobenius give
\[
s_m^q
=
\sum_{n\ge0}\pi^{mq^{n+1}}
=
s_m-\pi^m.
\]
\end{proof}

\subsection{Analytic representation and arithmetic scope}

Determinant reconstruction uses two inputs:
a prescribed Frobenius defect and the small-coefficient
normalization.
In the Carlitz example, the diagonal relation supplies
the defect independently.
Changing scalar traces into Carlitz moments or
divided-power coordinates would not supply this missing
information.

Belonging to a Mellin range is also different from being
the transform of a natural spectral distribution.
Every polynomial-defect solution is entire, so
Proposition~\ref{prop:entire-slice} represents it as the
Mellin slice of a distribution supported at zero in the
sense stated there.
That distribution is generally different from the weighted
spectral measure whose transform gives the logarithmic derivative.

For general $\Phi\in\mathcal U$, the normalized solution is
only known to be analytic on the open unit disc.
The Carlitz criterion then remains an additional condition,
as the following example shows.

\begin{example}[Small coefficients do not imply the Mellin range condition]
\label{ex:small-outside-range}
Choose
\[
c\in\overline{\Fq}\setminus\Fq,
\qquad 0<|\varepsilon|<1,
\]
and put
\[
F(t)
=
1-\frac{\varepsilon ct}{1+ct}
=
1+\sum_{k\ge1}\varepsilon(-c)^kt^k.
\]
Then $F\in\mathcal U$, with the formal variable renamed $t$.
Its prospective ordinary moments as a $y=-1$ Mellin slice are
\[
m_0=1,\qquad m_k=\varepsilon c^k\quad(k\ge1).
\]
For $n\ge1$, $G_n(0)=0$, since every nonconstant $G_n$
contains a factor $f_j$ vanishing at zero.
Hence the prospective Carlitz moments satisfy
\[
b_n
=
(1-\varepsilon)G_n(0)+\varepsilon G_n(c)
=
\varepsilon G_n(c).
\]
By Example~\ref{ex:wrong-support},
\[
\lim_{j\to\infty}|b_{q^j}|^{1/q^j}
=
q^{1/(q-1)}>1.
\]
Thus $F$ is not the Mellin slice of a distribution on $\cO$.

Nevertheless,
\[
\Phi=\Def(F)\in\mathcal U,
\]
and $F=\mathfrak F(\Phi)$ is its unique normalized solution.
Therefore neither membership in $\mathcal U$ nor normalized
solvability of a prescribed Frobenius equation, by itself,
implies the Mellin range condition.
\end{example}

An arithmetic application requires the defect to arise from
a natural operator or module identity.
For a Drinfeld module, the original Frobenius satisfies
\[
\tau(cx)=c^q\tau(x)
\]
and is generally not $\Cinf$-linear.
A comparison with Taelman's nuclear determinants
\cite{Taelman} must specify the underlying module,
coefficient ring, determinant theory, and scalarization.
Determinant laws provide another framework for retaining
characteristic-polynomial data over general rings
\cite{Chenevier}, but are not identified with our construction.

The resulting question is not merely whether a formal
Frobenius equation exists.
It is whether its data and normalization are supplied by
the arithmetic structure, and whether the scalar transform
has the intended spectral meaning.
The Anderson--Thakur comparison above gives one precise
connection without asserting a general identification with
Goss or Taelman special values.

%======================================================================
\section{Conclusion}
\label{sec:conclusion}
%======================================================================

The resolvent approach separates the construction of a scalar
spectral distribution from the choice of a zeta kernel.
On $\cO$, inverse Stieltjes duality and weighted Carlitz
coordinates give an explicit criterion for reconstructing a
distribution from a single Mellin exponent.
The content of this criterion is its range and support
condition, not merely the formal recovery of ordinary moments.
For stationary digit measures, self-similarity provides a
normalized fixed-point characterization, uniform approximation,
and identification from finitely many nonconstant moments
within that class.

The comparison with the $p$-adic theory separates common
non-archimedean mechanisms from positive-characteristic ones.
Inverse Stieltjes duality, moment expansions, and bounded-digit
contraction persist in characteristic zero.
Polynomial special values have the same kernel-level origin,
although exponent shifts and normalizations differ.
By contrast, the exact Frobenius exponent-difference formula
and the loss of multiplicity information in scalar traces
depend on characteristic $p$.

Determinants therefore require additional structure.
A prescribed coefficient-Frobenius defect has a unique
small-coefficient solution, obtained by Artin--Schreier
recursion or a convergent product.
Polynomial defects give entire functions and characterize
diagonal spectra formed from finitely many forward Frobenius
orbits, with multiplicities.
The weighted spectral Mellin transform gives the logarithmic
derivative, while the Frobenius equation determines the
normalized determinant.
This differs from the normalized $p$-adic log-gamma
construction, despite their common relation to the
Stieltjes kernel.

The shifted Carlitz operator makes the extra structure explicit.
Its diagonal relation supplies the defect, and comparison
with the standard Anderson--Thakur product identifies its
Fredholm determinant.
Further arithmetic applications require similarly natural
operator or module identities, together with a compatible
scalarization and normalization.

%======================================================================

\end{document}